

\documentclass[reqno,10pt]{amsart}



\usepackage{pdfsync}
\usepackage{ifthen}
\usepackage{amsmath,paralist}
\usepackage{amsthm}
\usepackage{graphicx,eucal}
\usepackage{graphpap,latexsym,epsf,bm}
\usepackage{color}
\usepackage{hyperref}
\usepackage{amssymb,mathrsfs,enumerate}


\setlength{\voffset}{-.7truein}
\setlength{\textheight}{8.8truein}
\setlength{\textwidth}{6.05truein}
\setlength{\hoffset}{-.7truein}

\addtolength{\hoffset}{-0,5cm}
\addtolength{\textwidth}{1,5cm}

\numberwithin{equation}{section}
\numberwithin{figure}{section}


\newtheorem{theorem}{Theorem}[section]

\newtheorem{lemma}[theorem]{Lemma}
\newtheorem{proposition}[theorem]{Proposition}
\newtheorem{definition}[theorem]{Definition}%
\newtheorem{remark}[theorem]{Remark}%

\newcommand{\R}{\mathbb{R}} 

\newcommand{\calI}{\mathcal{I}}

\newcommand{\dd}{\,\mathrm{d}}
\newcommand{\rmd}{\mathrm{d}}
\newcommand{\eps}{\varepsilon}

\newcommand{\weakto}{\rightharpoonup}
\newcommand{\Restr}[1]{\lower2pt\hbox{$|_{#1}$}}

\newcommand{\en}[3]{\mathcal{E}(#1, #2, #3)}
\newcommand{\ene}[2]{\mathcal{E}(#1, #2)}

\newcommand{\BV}{\ensuremath{\mathrm{BV}}}
\newcommand{\AC}{\mathrm{AC}}

\newcommand{\vvmV}[3]{
  \fre_{#1}(#2)
  \ifthenelse{\equal{#3}{}} {} {\|#3\|} }

\newcommand{\FNormname}[1]{\mathscr{P}}




\newcommand{\Cost}[4]{\ifthenelse{
    \equal{#1}\Phi}
  {\Phi(#4-#3)}
  {\Delta_{#1_{#2}}(#3,#4)}
}

\newcommand{\foraa}{\text{for a.a.\,}}

\newcommand{\down}{\downarrow}

\newcommand{\rmC}{\mathrm{C}}
\newcommand{\rmD}{\mathrm{D}}


\newcommand{\Vtaue}[1]{\mathrm{V}_{\kern-1pt\tau,\eps}^{#1}}
\newcommand{\Xitaue}[1]{\Xi_{\kern-1pt\tau,\eps}^{#1}}

\newcommand{\piecewiseConstant}[2]{\overline{\mathrm{#1}}_{\kern-1pt#2}}

\newcommand{\underpiecewiseConstant}[2]{\underline{\mathrm{#1}}_{\kern-1pt#2}}

\newcommand{\piecewiseLinear}[2]{\mathrm{#1}_{\kern-1pt#2}}

\newcommand{\pwM}[2]{\widetilde{\mathrm{#1}}_{\kern-1pt#2}}
 \def\trait #1 #2 #3 {\vrule width #1pt height #2pt depth #3pt}


\newcommand{\sfz}{\mathsf{z}}
\newcommand{\sfu}{\mathsf{u}}

\newcommand{\sft}{\mathsf{t}}
\newcommand{\sfq}{\mathsf{q}}

\newcommand{\fre}{\mathfrak e}


\newcommand{\Diss}[2]{
  \ifthenelse      {   \equal{#1}{V}  }         {\Psiv}
  {
    \ifthenelse      {   \equal{#1}{B}  }         {\Psiz}
    {
      \ifthenelse      {   \equal{#1}{\Bo}  }     {\Psiz}
      {\Psi_{\kern-1pt#1}}
      }
    }
  ^{#2}
}

\newcommand{\Dnorm}[1]{\Psiz_{\kern-2pt \scriptscriptstyle\land}(#1)}
\newcommand{\Dnormname}{\Psiz_{\kern-2pt \scriptscriptstyle\land}}

\newcommand{\cE}{\mathcal{E}}
\newcommand{\calE}{\cE}
\newcommand{\calM}{\mathcal{M}}

\def\nchi{{\raise.3ex\hbox{$\chi$}}}
\newcommand{\Bo}{{}}

\newcommand{\rmV}{\mathrm V}
\newcommand{\scalarV}[3]{\rmV_{\!#2}
  \ifthenelse{\equal{#3}{}}{}{(#3)}}
\newcommand{\scalarmu}[2]{\mu}
\newcommand{\scalarmuco}[2]{\mu_{\rm d}}
\newcommand{\scalarmuj}[2]{\mu_{\rm J}}
\newcommand{\scalarmul}[2]{\mu_{\mathscr L}}
\newcommand{\scalarmuc}[2]{\mu_{\rm C}}

\newcommand{\Psiv}{\Phi}
\newcommand{\Psiz}{\Psi}

\newcommand{\calR}{\mathcal{R}}
\newcommand{\calV}{\mathcal{V}}
\newcommand{\calW}{\mathcal{W}}
\newcommand{\calQ}{\mathcal{Q}}
\newcommand{\enet}[3]{\mathcal{E}(#1,#2,#3)}
\newcommand{\vpot}[3]{\calV_{\mathsf{#1}}(#2;#3)}
\newcommand{\vcof}[3]{\mathbb{V}_{\mathsf{#1}}(#2){#3}}
\newcommand{\vcofinv}[3]{\mathbb{V}_{\mathsf{#1}}(#2)^{-1}{#3}}
\newcommand{\vpotname}[1]{\calV_{\mathsf{#1}}}
\newcommand{\vcofname}[1]{\mathbb{V}_{\mathsf{#1}}}
\newcommand{\bbV}{\mathbb{V}}
\newcommand{\conjzspe}[4]{\calW_{\mathsf{#1},#2}^*(#3;#4)}
\newcommand{\conjz}[3]{\calW_{\mathsf{#1}}^*(#2;#3)}
\newcommand{\moreau}[3]{\calV_{\mathsf{#1}}^*(#2;#3)}
\newcommand{\conjzname}[2]{\calW_{\mathsf{#1}}^*(#2)}

\newcommand{\stab}[1]{K(#1)}

\newcommand{\thn}[1]{\theta_{\mathsf{#1}}}

\newcommand{\Me}[5]{\mathcal{M}_{#1}(#2,#3, #4,#5)}
\newcommand{\Mo}[4]{\mathcal{M}_{0}(#1,#2, #3,#4)}
\newcommand{\Mored}[4]{\mathcal{M}_{0}^{\mathrm{red}}(#1,#2, #3,#4)}
\newcommand{\moredname}{\mathcal{M}_{0}^{\mathrm{red}}}
\newcommand{\Mered}[4]{\mathcal{M}_{\eps}^{\mathrm{red}}(#1,#2, #3,#4)}
\newcommand{\Sign}{\mathop{\mathrm{Sign}}}
\newcommand{\calU}{\mathcal{U}}
\newcommand{\calZ}{\mathcal{Z}}

\newcommand{\contset}[1]{\Sigma(#1)}

\newcommand{\pbv}{\mathrm{pBV}}
\newcommand{\rgm}[4]{\mathrm{#1}_{\mathsf{#2}}\mathrm{#3}_{\mathsf{#4}}}

%


\begin{document}

\title[Balanced-Viscosity solutions for multi-rate systems]
{Balanced-Viscosity solutions for multi-rate systems}

\date{August 28, 2014}

\author{Alexander Mielke}
\address{Weierstra\ss-Institut,
  Mohrenstra\ss{}e 39,  D--10117 Berlin {\upshape and}  Institut f\"ur
  Mathematik, Humboldt-Universit\"at zu
  Berlin, Rudower Chaussee 25, D--12489 Berlin (Adlershof), Germany.}
\email{alexander.mielke\,@\,wias-berlin.de}

\author{Riccarda Rossi}
\address{DICATAM -- Sezione di Matematica, Universit\`a di
  Brescia, via Valotti 9, I--25133 Brescia, Italy.}
\email{riccarda.rossi\,@\,unibs.it}

\author{Giuseppe Savar\'e}
\address{Dipartimento di Matematica ``F.\ Casorati'', Universit\`a di
  Pavia.  Via Ferrata, I--27100 Pavia, Italy.}
\email{giuseppe.savare\,@\,unipv.it}

\thanks{A.M.\ has been partially
  supported by the  ERC grant no.267802 AnaMultiScale.
  R.R.\ and G.S.\ have been partially supported by a MIUR-PRIN'10-11 grant
  for the project ``Calculus of Variations''. R.R.\  also acknowledges
  support from the  Gruppo Nazionale per  l'Analisi Matematica, la
  Probabilit\`a  e le loro Applicazioni (GNAMPA) of the Istituto Nazionale di Alta Matematica (INdAM)}

\begin{abstract}
Several mechanical systems are modeled by
the static momentum balance for the displacement $u$
coupled with
a rate-independent  flow rule for some internal variable $z$.
We consider a  class of abstract systems  of ODEs which have the same structure, albeit in a finite-dimensional setting,
and
regularize both the static equation and the rate-independent flow rule
 by adding  viscous dissipation  terms with coefficients $\eps^\alpha$ and $\eps$, where $0<\eps \ll 1$ and $\alpha>0$ is a fixed parameter. Therefore for $\alpha \neq 1$ $u$ and $z$ have different relaxation rates.

We address the vanishing-viscosity analysis as $\eps \downarrow 0$ of the viscous system. We prove that, up to a subsequence,
(reparameterized) viscous solutions converge to  a parameterized curve yielding a  \emph{Balanced Viscosity} solution to the original
rate-independent system, and providing an accurate  description of the system behavior at jumps.  We also give a reformulation of the notion of \emph{Balanced Viscosity} solution in terms of a system of
subdifferential inclusions, showing that the viscosity in $u$ and the one in $z$ are involved in the jump dynamics in different ways,  according to whether $\alpha>1$,
$\alpha=1$, and $\alpha
\in (0,1)$.
\end{abstract}
\maketitle


\section{Introduction}
\label{s:Intro}
Several mechanical systems
 are described by  ODE or PDE systems of the type:
\begin{subequations}
\label{intro:rip-limit}
\begin{align}
\label{intro:rip-limit-1}
&
 \rmD_u \enet{t}{u(t)}{z(t)} =0  && \text{in } \calU^*,  && \foraa\, t \in (0,T),
 \\
 &
 \label{intro:rip-limit-2}
 \partial\calR_0(z'(t)) + \rmD_z \enet{t}{u(t)}{z(t)}  \ni 0 && \text{in } \calZ^*, &&  \foraa\, t \in (0,T),
 \end{align}
\end{subequations}
where $\calU$, $\calZ$ are Banach spaces, and  $\calE : [0,T] \times \calU \times \calZ \to \R$  is an energy functional.
For example,  within the ansatz of generalized standard materials, $u$ is the displacement, at equilibrium,
 while changes in the elastic behavior due to dissipative effects are
described in terms of an internal variable $z$ in some state space $\calZ$. In several mechanical phenomena \cite{Miesurvey}, dissipation due to
inertia and viscosity is negligible, and the system is governed by  rate-independent evolution,
which means that the (convex, nondegenerate) dissipation potential $\calR_0 : \calZ \to [0,\infty) $ is \emph{positively homogeneous of degree $1$}.
Thus system \eqref{intro:rip-limit-2} is invariant for time-rescalings.

It is well known that, if the map $z\mapsto \enet tuz$ is not uniformly convex, one cannot expect the existence of absolutely continuous solutions to
system \eqref{intro:rip-limit}.
This fact has motivated the development of various
weak solvability concepts
for \eqref{intro:rip-limit}, starting with the well-established notion of \emph{energetic solution}. The latter
dates back to \cite{Mie-Theil99} and was further developed in \cite{Mielke-Theil04} (see  \cite{DaFrTo05QCGN}, as well,  in the context of crack growth),
cf.\ also \cite{Miesurvey}, \cite{Miel08?DEMF}
 and the references therein.
 Despite the several good features of the energetic formulation, it is known that, in the case the energy $z \mapsto \enet tuz$ is nonconvex,
the global stability condition
   may lead to
   jumps of $z$ as a function of time that are not motivated by, or in accord with, the mechanics of the system, cf.\ e.g. the discussions
   in \cite[Ex.\,6.1]{Miel03EFME},
\cite[Ex.\,6.3]{KnMiZa07?ILMC}, and
\cite[Ex.\,1]{MRS09}.

Over the last years,
an alternative selection criterion of  mechanically feasible weak solution concepts for  the rate-independent system \eqref{intro:rip-limit}
has been developed,
moving from the finite-dimensional analysis in \cite{ef-mie06}.
 It is
 based on the interpretation of
\eqref{intro:rip-limit}
 as originating in the  vanishing-viscosity limit  of the \emph{viscous} system
 \begin{subequations}
\label{van-visco-intro}
\begin{align}
\label{van-visc-1-intro}
&
 \rmD_u \enet{t}{u(t)}{z(t)} =0  && \text{in } \calU^*,  && \foraa\, t \in (0,T),
 \\
 &
 \label{van-visc-2-intro}
 \partial\calR_0(z'(t))  +\eps \partial\vpotname{z} (z'(t))  + \rmD_z \enet{t}{u(t)}{z(t)}  \ni 0 && \text{in } \calZ^*, &&  \foraa\, t \in (0,T),
 \end{align}
\end{subequations}
 where $\vpotname{z} : \calZ \to [0,\infty)
$  is a  dissipation potential with \emph{superlinear}  (for instance, quadratic) growth at infinity.
Observe that the existence of solutions for  the \emph{generalized gradient system} \eqref{van-visco-intro} follows from  \cite{ColliVisintin90,Colli92},
cf.\ also \cite{MRS-dne}.
This vanishing-viscosity approach leads to a notion of solution featuring  a \emph{local}, rather than \emph{global}, stability  condition
for the  description of  rate-independent
evolution, thus
avoiding ``too early'' and ``too long'' jumps.
 Furthermore, it provides an accurate description of the energetic behavior of the system at jumps, in particular highlighting how viscosity,
 neglected in the limit as $\eps \downarrow 0$, comes back into the picture and governs the jump dynamics.
 This has been demonstrated in \cite{MRS09,MRS10,mielke-rossi-savare2013}
within the frame of abstract, finite-dimensional and infinite-dimensional, rate-independent systems, and in \cite{Mielke-Zelik}
for a wide class parabolic equations with a rate-independent term.  This analysis has also been developed
in several applicative contexts, ranging from crack propagation \cite{ToaZan06, KnMiZa07?ILMC}, to plasticity \cite{DalMaso-DeSimone-Solombrino2011,DalMaso-DeSimone-Solombrino2012,BabFraMor12, FraSte},  and to  damage \cite{KnRoZa2011}, among others.


In this note, we shall perform the vanishing viscosity analysis of system \eqref{intro:rip-limit} by considering the viscous approximation of  \eqref{intro:rip-limit-1}, in addition to the viscous approximation of \eqref{intro:rip-limit-2}.
 More precisely, we will address the  asymptotic analysis as $\eps\down 0$ of the system
\begin{subequations}
\label{intro:vv-limit}
\begin{align}
\label{intro:vv-limit-1}
&
\eps^\alpha \partial\vpotname{u} (u'(t)) +  \rmD_u \enet{t}{u(t)}{z(t)} =0 && \text{in } \calU^*, && \foraa\, t \in (0,T),
 \\
 &
 \label{intro:vv-limit-2}
 \partial\calR_0(z'(t)) + \eps \partial\vpotname{z} (z'(t)) + \rmD_z \enet{t}{u(t)}{z(t)}  \ni 0 && \text{in } \calZ^*, &&  \foraa\, t \in (0,T),
 \end{align}
\end{subequations}
where $\alpha >0$ and
$\vpotname{u}$ a quadratic dissipation potential for the variable $u$. Observe that \eqref{intro:vv-limit} models
systems with (possibly)  \emph{different} relaxation times.  In fact , the parameter $\alpha>0$ sets which of the two variables $u$ and $z$ relaxes faster to
\emph{equilibrium} and \emph{rate-independent} evolution, respectively.

Let us mention that
 the analysis developed in this paper is in the mainstream of  a series of recent papers
 focused on the coupling between rate-independent and viscous systems. First and foremost,  in
  \cite{Roub09}
a wide class of rate-independent processes in viscous solids with inertia has been tackled, while  the coupling with temperature has further been considered in  \cite{Roub10}.  In fact,   in these systems 
the evolution for the internal variable $z$ is  purely rate-independent and no vanishing viscosity  is added to the equation for $z$,
  viscosity and inertia   only intervene in the evolution for the displacement $u$.   For these processes, the author
  has proposed a notion of solution of  energetic type consisting of the weakly formulated momentum equation for the displacements (and also of the weak heat equation
  in \cite{Roub10}), of an energy balance, and of a  \emph{semi-stability} condition. The latter reflects the mixed \emph{rate dependent/independent} character of the system. In
 \cite{Roub09} and  \cite{Roub13} a vanishing-viscosity analysis  (in the momentum equation) has been performed. As discussed in  \cite{Roub13} in the context of delamination, this approach leads to   \emph{local solutions} (cf.\ also \cite{Miel08?DEMF}),
 describing crack initiation (i.e., delamination) in a
 physically feasible way. 
  In \cite{Racca}, the vanishing-viscosity approach has also been developed in the context
  of a model for crack growth in the two-dimensional antiplane case, with a pre-assigned crack path, coupling
   a viscoelastic momentum equation  with a viscous flow rule for the crack tip; again, this procedure leads to solutions jumping later than energetic solutions. With a rescaling technique, a vanishing-viscosity analysis both in the flow rule, and in the momentum equation, has   been
   recently performed  in  \cite{DM-Scala} for perfect plasticity, recovering energetic solutions thanks to the convexity of the energy. In \cite{Scala}, the same analysis has led to
   \emph{local solutions}  for
  a delamination system.

  With the vanishing-viscosity analysis in this paper,
   besides finding good  \emph{local}  conditions for the limit evolution,
      we
 want to  add as an  additional feature
  a thorough description of the energetic behavior of the solutions at jumps. This  shall be deduced  from an \emph{energy balance}. Moreover,
  in comparison to the aforementioned contributions  \cite{Racca,DM-Scala, Scala} a greater emphasis shall be put here on how the multi-rate character of system
\eqref{intro:vv-limit}
enters in the  description of the jump  dynamics.
In particular, we will
   convey that  viscosity in $u$ and viscosity $z$
are involved in the path followed by the system at jumps in (possibly) different ways, depending on whether the  parameter $\alpha$ is strictly bigger than, or equal to, or strictly smaller than $1$.


To focus on this and to avoid  overburdening the paper
with technicalities,
we shall keep to a simple functional analytic setting.
 Namely, we shall consider the \emph{finite-dimensional}  and \emph{smooth}  case
\label{simpler-intro}
\begin{equation}
\label{simpler-intro-1}
\calU = \R^n, \qquad \calZ = \R^m, \qquad
 \calE \in \rmC^1 ([0,T]\times \R^{n} \times
 \R^m)\,.
\end{equation}
Obviously, this considerably simplifies the analysis, since the difficulties attached to nonsmoothness of the energy
 and 
 to infinite-dimensionality are completely avoided. Still, even within such a simple setting
(where, however, we will allow for  state-dependent dissipation potentials
$\calR_0$,
$\vpotname z$, and $\vpotname u$), the key ideas
of our vanishing-viscosity approach
can be highlighted.


Let us briefly summarize our results, focusing on a further simplified version of
\eqref{intro:vv-limit}. In the setting of
 \eqref{simpler-intro-1}, and with the choices
  \[
  \vpotname u (u') = \frac12 |u'|^2, \qquad
\vpotname z (z') = \frac12 |z'|^2,
\]
 system \eqref{intro:vv-limit}
 reduces to
the ODE system
\begin{subequations}
\label{eps-system-intro}
\begin{align}
\label{eq-u-intro}
&
\eps^\alpha u'(t) + \rmD_u \enet{t}{u(t)}{z(t)} =0 && \text{in } (0,T),
\\
&
\label{eq-z-intro}
 \partial\calR_0(z'(t)) + \eps z'(t) + \rmD_z \enet{t}{u(t)}{z(t)}  \ni 0 && \text{in } (0,T).
\end{align}
\end{subequations}

First of all, following \cite{MRS09,MRS10,mielke-rossi-savare2013}, and along the lines of the \emph{variational} approach to
gradient flows by
\textsc{E.\ De Giorgi} \cite{Ambrosio95, AGS08}, we will pass to the limit as $\eps \down 0$ in the \emph{energy-dissipation} balance associated (and equivalent, by Fenchel-Moreau duality and the chain rule for $\calE$) to
\eqref{eps-system-intro}, namely
\begin{equation}
\label{enid-eps-expl-intro}
\begin{aligned}
&
\enet t{u(t)}{z(t)}   +
\int_s^t
 \calR_0 (z'(r)) +
 \frac{\eps}2 |z'(r)|^2 +\frac{\eps^\alpha}2 |u'(r)|^2 \dd r
 \\ & \quad +
\int_s^t
 \frac1{\eps}  \conjzname {z}{\rmD_z \enet r{u(r)}{z(r)}}
 +\frac{1}{2\eps^\alpha} |\rmD_u \enet r{u(r)}{z(r)}|^2
 \dd r
 \\
 &
 = \enet s{u(s)}{z(s)}+ \int_s^t \partial_t \enet r{u(r)}{z(r)} \dd r
 \end{aligned}
\end{equation}
for all $0 \leq s \leq t \leq T$, where
$\mathcal{W}_{\mathsf{z}}^*$ 
is the Legendre transform of
$\calR_0 + \vpotname z$. 
As we will see in Section \ref{s:3},  \eqref{enid-eps-expl-intro}
is well-suited to unveiling the role played by viscosity in the description of the energetic behavior of the system at jumps. Indeed, it
reflect the competition between the tendency of the system to be governed by \emph{viscous} dissipation both for the variable $z$ and for the variable  $u$
(with different rates if $\alpha \neq 1$),
 and its tendency to  be \emph{locally stable}  in $z$, and at equilibrium in $u$.
for $u$, cf.\ also the discussion in Remark \ref{rmk:switch}.

Secondly,
to develop the analysis as $\eps \down 0$ for a family of
curves
$(u_\eps,z_\eps)_\eps \subset H^1 (0,T; \R^n \times \R^m)$ fulfilling \eqref{enid-eps-expl-intro}
  we will adopt a by now well-established technique from
\cite{ef-mie06}. Namely,
to capture the viscous transition paths  at jump points,
 we will   reparameterize the curves $(u_\eps,z_\eps)$, for instance by their arc-length. Hence we will address the analysis as
 $\eps \down 0$ of the  \emph{parameterized curves}
 $(\sft_\eps,\sfu_\eps,\sfz_\eps)_\eps $ defined on the interval $[0,S]$ with values  in the extended phase
 space $[0,T]\times\R^n \times \R^m$, with $\sft_\eps$ the rescaling functions and $\sfu_\eps:= u_\eps \circ \sft_\eps$, $\sfz_\eps:= z_\eps \circ \sft_\eps$.
 Under suitable conditions
  it can be proved that, up to a subsequence the curves $(\sft_\eps,\sfu_\eps,\sfz_\eps)_\eps $
 converge to a triple $(\sft,\sfu,\sfz) \in \AC ([0,S]; [0,T]\times \R^n \times \R^m)$. Its evolution is described by an energy-dissipation  balance 
 obtained
 by passing to the limit in the reparameterized version of \eqref{enid-eps-expl-intro}. cf.\ Theorem \ref{th:main}.
 We will refer to $(\sft,\sfu,\sfz)$ as a \emph{parameterized Balanced Viscosity} solution to the rate-independent system
 $(\R^n \times \R^m, \calE, \calR_0 + \eps \vpotname z + \eps^\alpha \vpotname u) $.

The main result of this paper, Theorem \ref{prop:diff-incl}, provides a more transparent reformulation of the energy-dissipation
balance defining  a parameterized Balanced Viscosity solution $(\sft,\sfu,\sfz)$.
It is
 in terms of a system of subdifferential inclusions fulfilled
by  the curve
$(\sft,\sfu,\sfz)$, namely
\begin{equation}
\label{diff-syst-intro}
\begin{aligned}
&
\thn u(s) \sfu'(s) + (1-\thn{u} (s)) \rmD_u \enet{\sft(s)}{\sfu(s)}{\sfz(s)} \ni 0 && \foraa\, s \in (0,S),
\\
&
(1-\thn{z} (s))  \partial\calR_0 (\sfq(s),\sfz'(s)) +
\thn z(s) \sfz'(s) +  (1-\thn{z} (s))  \rmD_z \enet{\sft(s)}{\sfu(s)}{\sfz(s)} \ni 0 && \foraa\, s \in (0,S),
\end{aligned}
\end{equation}
where the Borel functions $\thn u,\, \thn z : [0,S] \to [0,1]$ fulfill
\begin{equation}
\label{switching-intro}
\sft'(s)  \thn u(s) =  \sft'(s)  \thn z (s) =0 \qquad \foraa\, s \in (0,S),
\end{equation}
The latter condition reveals that the viscous terms $\sfu'(s) $ and $\sfz'(s) $ may contribute to \eqref{diff-syst-intro}
only at jumps of the system, corresponding to $\sft'(s)=0$ as the function $\sft$ records the (slow) external time scale.
 In this respect,
\eqref{diff-syst-intro}--\eqref{switching-intro} is akin to the (parameterized) subdifferential inclusion
\begin{equation}
\label{diff-single-intro}
\begin{aligned}
&
 \rmD_u \enet{\sft(s)}{\sfu(s)}{\sfz(s)} \ni 0 && \foraa\, s \in (0,S),
\\
&
 \partial\calR_0 (\sfz'(s)) +
\theta (s) \sfz'(s) +    \rmD_z \enet{\sft(s)}{\sfu(s)}{\sfz(s)} \ni 0 && \foraa\, s \in (0,S),
\end{aligned}
\end{equation}
with the Borel function $\theta: [0,S] \to [0,\infty)$
fulfilling
\begin{equation}
\label{switching-intro-2}
\sft'(s)  \theta (s) = 0 \qquad \foraa\, s \in (0,S).
\end{equation}
Indeed, \eqref{diff-single-intro} is the
 subdifferential reformulation for the parameterized Balanced Viscosity solutions
obtained by taking the limit as $\eps \down 0$ in \eqref{van-visco-intro}, where viscosity is added only to the flow rule.
However, note that \eqref{diff-syst-intro} has a much more complex structure than \eqref{diff-single-intro}. In addition to the switching condition
\eqref{switching-intro}, the functions $\thn u$ and $\thn z$ fulfill additional constraints, cf.\ Theorem  \ref{prop:diff-incl}.
They
differ in the three cases $\alpha>1$, $\alpha=1$, and $\alpha \in (0,1)$
 and show that viscosity in $u$ and $z$ pops back into the description of the system behavior at jumps, in a way depending on whether
 $u$ relaxes faster to equilibrium than $z$,
 $u$ and $z$ have the same relaxation rate, or $z$
relaxes faster to local stability than $u$.
\paragraph{\bf Plan of the paper} In Section \ref{ss:2.1} we set up all the basic assumptions on the dissipation potentials $\calR_0$,
$\vpotname u$, and $\vpotname z$. Section \ref{s:2} is devoted to the generalized gradient system
driven by $\calE$ and the ``viscous'' potential $\calR_\eps := \calR_0+ \eps \vpotname z + \eps^\alpha \vpotname u$. In particular, we establish a series of estimates on the viscous solutions $(u_\eps,z_\eps)$ which will be at the core of the vanishing viscosity analysis, developed  in Section \ref{s:3} with Theorem
\ref{th:main}. In Section \ref{s:4} we will prove Theorem \ref{prop:diff-incl} and explore the mechanical interpretation of parameterized Balanced Viscosity solutions. Finally,  in Section \ref{s:5} we will illustrate this solution notion, focusing on how it varies in the cases $\alpha>1$, $\alpha=1$,
$\alpha \in (0,1)$,  in two different examples.
\paragraph{\bf Notation}
In what follows, we will denote by
$\langle \cdot, \cdot \rangle$ and by
 $|\cdot|$  the scalar product and the  norm in  any Euclidean space $\R^d$, with $d=n,\, m,\, n+m, \, \ldots$.
Moreover, we will use the same symbol $C$ to denote a positive constant depending on data, and possibly varying from line to line.
\section{Setup}
\label{ss:2.1}
As mentioned in the introduction,
 we are going to address a more general version of system \eqref{eps-system-intro},
where the
$1$-positively homogeneous dissipation potential $\calR_0$,  as well as the
quadratic  potentials
$\vpotname u$ and $\vpotname z$
 for $u'$ and $z'$, are also depending on the state
variable
\[
q:= (u,z)\in \mathcal{Q}:= \R^{n} \times \R^{m}.
\]
Hence, the rate-independent system  is
\begin{equation}
\label{rip-syst}
\partial_{q'} \calR_0(q(t),z'(t)) +\rmD_q \calE(t,q(t)) \ni 0 \qquad \text{in } (0,T),
\end{equation}
namely
\begin{subequations}
\label{rip-limit}
\begin{align}
\label{rip-limit-1}
&
 \rmD_u \enet{t}{u(t)}{z(t)} =0   && \foraa\, t \in (0,T),
 \\
 &
 \label{rip-limit-2}
 \partial\calR_0(q(t), z'(t)) + \rmD_z \enet{t}{u(t)}{z(t)}  \ni 0 &&  \foraa\, t \in (0,T).
 \end{align}
\end{subequations}
We
approximate it with
the following
generalized gradient system
\begin{equation}
\label{gen-grad-syst}
\partial_{q'} \calR_\eps(q(t),q'(t)) +\rmD_q \calE(t,q(t)) \ni 0 \qquad \text{in } (0,T),
\end{equation}
where the overall dissipation potential $\calR_\eps$ is of the form
 \begin{equation}
\label{form-calR-eps}
\calR_\eps (q,q')= \calR_\eps (q,(u',z')):= \calR_0 (q,z') +\eps\vpot zq{z'} +\eps^\alpha\vpot uq{u'} \quad \text{with } \alpha >0.
\end{equation}

In what follows, let us specify our assumptions on the dissipation potentials $\calR_0$, $\vpotname z$, and $\vpotname u$.
\begin{description}
\item[\textbf{Dissipation}]  We require that
\begin{equation}
\label{ass:dissip-pot-R}
\tag{$\mathrm{{R}_0}$}
\begin{aligned}
&
\calR_0 \in \rmC^0  (\calQ \times \R^m ),  \quad \forall\, q \in \calQ \  \calR_0 (q,\cdot) \text{ is convex and $1$-positively homogeneous, and }
\\ &  \exists\, C_{0,R}, \, C_{1,R}>0 \ \forall\, (q,z')\in \calQ \times \R^m \, : \qquad
C_{0,R} |z'|\leq \calR_0 (q,z') \leq C_{1,R}|z'|,
\end{aligned}
\end{equation}
\begin{equation}
\label{ass:dissip-pot-Vz}
\tag{$\mathrm{{V}_z}$}
\begin{gathered}
\vpotname z : \calQ \times \R^m \to [0,\infty) \text{ is of the form }
\vpot zq{z'} = \frac12 \langle \vcof zq{z'}, z' \rangle \quad \text{with }
\\
\vcofname z  \in \rmC^0 (\calQ;\R^{m \times m}) \quad\text{and}\quad \exists\, C_{0,V}, \, C_{1,V}>0 \ \forall\, q\in \calQ \, : \qquad
C_{0,V} |z'|^2 \leq \vpot zq{z'} \leq C_{1,V}|z'|^2,
\end{gathered}
\end{equation}
\begin{equation}
\label{ass:dissip-pot-Vu}
\tag{$\mathrm{{V}_u}$}
\begin{gathered}
\vpotname u : \calQ \times \R^n \to [0,\infty) \text{ is of the form }
\vpot uq{u'} = \frac12 \langle \vcof uq{u'}, u' \rangle \quad \text{with }
\\
\vcofname u \in \rmC^0 (\calQ;\R^{n\times n}) \quad\text{and}\quad \exists\, \widetilde{C}_{0,V}, \, \widetilde{C}_{1,V}>0 \ \forall\, q\in \calQ \, : \qquad
\widetilde{C}_{0,V}|u'|^2 \leq \vpot uq{u'} \leq \widetilde{C}_{1,V}|u'|^2.
\end{gathered}
\end{equation}
\end{description}

For later use, let us recall that, due to the
$1$-homogeneity of $\calR_0(q,\cdot)$,
for every $q\in \calQ$ the convex analysis subdifferential $\partial \calR_0(q,\cdot) : \R^m \rightrightarrows \R^m$
is characterized by
\begin{equation}
\label{charact-1-homog}
\zeta \in \partial \calR_0(q,z') \quad \text{if and only if}\quad \begin{cases}
\langle \zeta, w\rangle \leq \calR_0(q,w) & \text{for all } w \in \R^m,
\\
\langle \zeta, z'\rangle \geq \calR_0(q,z')\,.
\end{cases}
\end{equation}
Furthermore,
observe that \eqref{ass:dissip-pot-Vz} and \eqref{ass:dissip-pot-Vu} ensure that for every $q \in \calQ$ the matrices
$ \vcof zq{}\in \R^{n \times n}$ and $ \vcof uq{}\in \R^{m \times m}$  are positive definite, uniformly with respect to $q$. Furthermore, for later use we observe that the conjugate
\[
\moreau uq{\eta} = \sup_{v \in \R^n} \left( \langle \eta, v \rangle - \vpot uqv \rangle\right) = \frac12 \langle \vcofinv uq{\eta}, \eta \rangle
\]
fulfills
\begin{equation}
\label{inv-vcof-1}
\overline{C}_0 |\eta|^2 \leq \moreau uq{\eta} \leq \overline{C}_1 |\eta|^2
\end{equation}
for some  $\overline{C}_0,\,\overline{C}_1>0$. We have the analogous coercivity  and growth properties for  $\calV_{\mathsf{z}}^*$.

Our assumptions concerning the energy functional $\calE$, expounded below,
 are typical of the \emph{variational approach} to gradient flows and generalized gradient systems. Since we
are in a finite-dimensional setting, to impose   \emph{coercivity} it is sufficient to
ask for boundedness of energy sublevels. The power-control condition will allow us to bound $\partial_t \calE$ in the derivation of the basic energy estimate
on system \eqref{gen-grad-syst}, cf.\ Lemma \ref{lemma:2.1} later on.
The smoothness of $\calE$ guarantees the validity of two further, key properties, i.e.\ the continuity of $\rmD_q\calE$, and the chain rule
(cf.\ \eqref{chain-rule} below), which will play a crucial role for our analysis.

Later on, in Section  \ref{s:2},  we will impose that $\calE$ is uniformly convex with respect to $u$. As we will see, this condition will be at the core of the proof of an estimate
for
 $\|u'\|_{L^1(0,T;\R^n)}$,  uniform with respect to
the parameter $\eps$. Observe that, unlike for $z'$ such estimate does not follow from the basic energy estimate on system  \eqref{gen-grad-syst}, since the overall dissipation potential
$\calR_\eps$ is degenerate in $u'$ as $\eps \down 0$. It will require additional careful calculations.
\begin{description}
\item[\textbf{Energy}] we assume that $\calE \in \rmC^1 ([0,T]\times \calQ)$   and that it is bounded from below by a positive constant
(indeed by adding a constant we can always reduce to this case). Furthermore, we require that
\begin{equation}
\label{ass:E}
\tag{$\mathrm{E}$}
\begin{aligned}
 &
 \exists\, C_{0,E}\,,  \widetilde{C}_{0,E}>0 \ \forall\, (t,q) \in [0,T]\times \calQ \,: \quad && \ene tq \geq C_{0,E}|q|^2 - \widetilde{C}_{0,E} && \text{\textbf{(coercivity),}}
 \\
 &
 \exists\,  C_{1,E}>0  \ \forall\, (t,q) \in [0,T]\times \calQ\, : \quad
&& |\partial_t \ene tq| \leq C_{1,E} \ene tq && \text{\textbf{(power control).}}
\end{aligned}
\end{equation}
\end{description}

In view of \eqref{form-calR-eps}, \eqref{ass:dissip-pot-Vz}, and \eqref{ass:dissip-pot-Vu},
the generalized gradient system
 \eqref{gen-grad-syst} reads
\begin{subequations}
\label{eps-system}
\begin{align}
\label{eq-u}
&
\eps^\alpha \vcof{u}{q(t)}{u'(t)} + \rmD_u \enet{t}{u(t)}{z(t)} =0 && \text{in } (0,T),
\\
&
\label{eq-z}
\eps \vcof{z}{q(t)}{z'(t)} +\partial\calR_0(z'(t)) + \rmD_z \enet{t}{u(t)}{z(t)} =0 && \text{in } (0,T).
\end{align}
\end{subequations}
\paragraph{\bf Existence of solutions to the generalized gradient system \eqref{gen-grad-syst}.}
It follows from the results in \cite{ColliVisintin90,MRS-dne} that, under the present assumptions,
for every $\eps>0$ there exists a solution $q_\eps \in H^{1}(0,T;\calQ)$ to the Cauchy problem for
\eqref{gen-grad-syst}.
Observe that
 $q_\eps$ also fulfills
 the energy-dissipation identity
\begin{equation}
\label{enid-eps}
\ene t{q_\eps(t)} +
\int_s^t
 \calR_\eps (q_\eps(r), q_\eps'(r)) + \calR_\eps^* (q_\eps(r),-\rmD_q \calE(r,q_\eps(r)))  \dd r
 = \ene s{q_\eps(s)} + \int_s^t \partial_t \ene r{q_\eps(r)} \dd r.
\end{equation}
In \eqref{enid-eps},  the dual dissipation potential
$\calR_\eps^* : \calQ \times \R^{n+m} \to \R$ is the Fenchel-Moreau conjugate  of $\calR_\eps$, i.e.
\begin{equation}
\label{calReps}
\calR_\eps^*(q,\xi):= \sup_{v \in \calQ} \left(\langle \xi, v \rangle - \calR_\eps (q,v) \right).
\end{equation}
 In fact, by the   Fenchel equivalence the differential inclusion \eqref{gen-grad-syst}
reformulates as
\[
\calR_\eps (q_\eps(t),q_\eps'(t)) +\calR_\eps^* (q_\eps(t), - \rmD_q \calE(t,q_\eps(t)) ) =
\langle  - \rmD_q \calE(t,q_\eps(t)), q_\eps'(t) \rangle \qquad \foraa\, t \in (0,T).
\]
Combining this with  the chain rule
\begin{equation}
\label{chain-rule}
\frac{\dd }{\dd t }\calE (t,q(t)) =\partial_t \calE(t,q(t)) + \langle \rmD_q \calE(t,q(t)), q'(t) \rangle   \qquad \foraa\, t \in (0,T)
\end{equation}
along any curve $q\in \mathrm{AC}([0,T]; \calQ)$ and integrating in time,  we conclude \eqref{enid-eps}.

The energy balance \eqref{enid-eps}   will play a crucial role in our analysis: indeed, after deriving in Sec.\ \ref{s:2}  a series of a priori estimates, uniform with respect to the parameter $\eps>0$,
 we shall pass to the limit in the parameterized version of
\eqref{enid-eps} as $\eps\down 0$. We will  thus obtain  a (parameterized) energy-dissipation identity which encodes information on the behavior of the
limit
 system for $\eps=0$, in particular
  at the  jumps of the limit curve $q$ of the solutions $q_\eps$ to \eqref{gen-grad-syst}.

\section{A priori estimates}
\label{s:2}
In this section, we consider a family $(q_\eps)_\eps \subset H^1 (0,T;\calQ)$ of solutions to
the Cauchy problem for \eqref{gen-grad-syst},
with a converging   sequence of  initial data $(q_\eps^0)_\eps$, i.e.
\begin{equation}
\label{bded-data}
q_\eps^0 \to q^0
\end{equation}
for some $q^0 \in \calQ$.

 Our first result, Lemma \ref{lemma:2.1},
 provides a series of basic estimates on the functions $(q_\eps)$, as well as  a bound for  $\| z_\eps'\|_{L^1 (0,T;\R^m)}$, uniform with respect to $\eps$.
 It holds under conditions \eqref{ass:dissip-pot-R},
\eqref{ass:dissip-pot-Vz},
 \eqref{ass:dissip-pot-Vu}, \eqref{ass:E}, as well as \eqref{bded-data}.

 Under a further property of the dissipation potential
 $\vpotname u$ (cf.\ \eqref{need-esti} below),
 assuming \emph{uniform convexity} of $\calE$ with respect to the variable $u$, and  requiring an additional condition
the initial data $(q_\eps^0)_\eps$ (see \eqref{well-prep}),  in Proposition \ref{prop:aprio-eps} we
 will derive  the following crucial estimate,
uniform with respect to $\eps$:
\begin{equation}
\label{L1-est}
\|q_\eps'\|_{L^1 (0,T;\R^{n+m})} \leq C.
\end{equation}

We start with the following result, which does not require the above
mentioned enhanced conditions.

\begin{lemma}
\label{lemma:2.1}
Let $\alpha>0$.  Assume \eqref{ass:dissip-pot-R},
\eqref{ass:dissip-pot-Vz}, \eqref{ass:dissip-pot-Vu}, \eqref{ass:E},
and \eqref{bded-data}.  Then, there exists a constant $C>0$ such that
for every $\eps>0$
\begin{subequations}
\begin{align}
 \label{bound-energies}
 &
\text{(a)} \quad  \sup_{t \in [0,T]} \ene t{q_\eps(t)}\leq C,
  \\
  &
  \label{bound-q-eps}
  \text{(b)} \quad
 \sup_{t \in [0,T]}  |q_\eps(t)| \leq C,
  \\
  &
 \label{est-z}
 \text{(c)} \quad
 \int_0^T |z_\eps'(r)| \dd r \leq C.
 \end{align}
\end{subequations}
\end{lemma}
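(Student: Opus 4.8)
The plan is to derive all three bounds from the energy-dissipation identity \eqref{enid-eps}, exploiting the power-control and coercivity conditions in \eqref{ass:E} together with the growth bounds on the dissipation potentials. First I would observe that, since all the dissipative terms in the integrand of the left-hand side of \eqref{enid-eps} are nonnegative (the primal potentials $\calR_\eps$ are nonnegative by \eqref{ass:dissip-pot-R}, \eqref{ass:dissip-pot-Vz}, \eqref{ass:dissip-pot-Vu}, and so are the conjugates $\calR_\eps^*$), I may drop the dissipation integral to obtain, for $s=0$ and any $t \in [0,T]$,
\[
\ene t{q_\eps(t)} \leq \ene 0{q_\eps^0} + \int_0^t |\partial_t \ene r{q_\eps(r)}| \dd r \leq \ene 0{q_\eps^0} + C_{1,E} \int_0^t \ene r{q_\eps(r)} \dd r.
\]
Since \eqref{bded-data} gives $q_\eps^0 \to q^0$, and $\calE \in \rmC^1$, the initial energies $\ene 0{q_\eps^0}$ are uniformly bounded; a Gr\"onwall argument then yields $\sup_{t\in[0,T]} \ene t{q_\eps(t)} \leq \ene 0{q_\eps^0} \rme^{C_{1,E}T} \leq C$, which is part (a). Part (b) is then immediate from the coercivity estimate $\ene tq \geq C_{0,E}|q|^2 - \widetilde C_{0,E}$ in \eqref{ass:E}, rearranged to $|q_\eps(t)|^2 \leq (C + \widetilde C_{0,E})/C_{0,E}$.

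For part (c) I would go back to \eqref{enid-eps} and this time keep the dissipation integral. Using part (a) to bound $\ene s{q_\eps(s)}$ from above and $\calE \geq 0$ (indeed $\calE$ is bounded below by a positive constant) to discard $\ene t{q_\eps(t)}$ on the left, plus the power-control bound $|\partial_t \ene r{q_\eps(r)}| \leq C_{1,E}\ene r{q_\eps(r)} \leq C_{1,E}C$, I obtain
\[
\int_0^T \calR_\eps(q_\eps(r), q_\eps'(r)) \dd r \leq \int_0^T \left( \calR_\eps(q_\eps(r),q_\eps'(r)) + \calR_\eps^*(q_\eps(r),-\rmD_q\calE(r,q_\eps(r))) \right) \dd r \leq C.
\]
Now from \eqref{form-calR-eps} and the lower bounds in \eqref{ass:dissip-pot-R}, \eqref{ass:dissip-pot-Vz}, \eqref{ass:dissip-pot-Vu}, one has $\calR_\eps(q,(u',z')) \geq \calR_0(q,z') \geq C_{0,R}|z'|$ (the viscous terms are nonnegative and can be dropped), so $C_{0,R}\int_0^T |z_\eps'(r)|\dd r \leq \int_0^T \calR_\eps(q_\eps(r),q_\eps'(r))\dd r \leq C$, which gives (c).

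I do not anticipate a serious obstacle here: the only point requiring a little care is the justification that \eqref{enid-eps} does hold for the $H^1$ solutions $q_\eps$ — but this is exactly the Fenchel-equivalence-plus-chain-rule argument already carried out in the excerpt right after \eqref{chain-rule}, so it may be invoked directly. One should also note that the constant $C$ in the conclusion depends on $T$, on the structural constants $C_{0,R}, C_{0,E}, \widetilde C_{0,E}, C_{1,E}$, and on $\sup_\eps \ene 0{q_\eps^0}$ (finite by \eqref{bded-data}), but crucially \emph{not} on $\eps$, since every bound used is $\eps$-uniform: the $\eps$- and $\eps^\alpha$-weighted viscous contributions only help (they are nonnegative on the dissipation side) and are simply discarded. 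The genuinely delicate estimate — the $\eps$-uniform $L^1$ bound on $u_\eps'$, where the degeneracy of $\calR_\eps$ in $u'$ as $\eps \down 0$ bites — is deferred to Proposition \ref{prop:aprio-eps} and is not needed for Lemma \ref{lemma:2.1}.
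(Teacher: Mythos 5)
Your proposal is correct and follows essentially the same route as the paper: drop the nonnegative dissipation terms in \eqref{enid-eps}, use the power control and Gr\"onwall to get (a), coercivity for (b), and then return to \eqref{enid-eps} to bound the dissipation integral, extracting (c) from the lower bound $\calR_\eps(q,(u',z')) \geq \calR_0(q,z') \geq C_{0,R}|z'|$. The remarks on $\eps$-uniformity of the constants and on deferring the $L^1$ bound for $u_\eps'$ to Proposition \ref{prop:aprio-eps} match the paper's structure exactly.
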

\begin{proof}
We  exploit the energy identity \eqref{enid-eps}.
Observe that $\calR_\eps^* (q,\xi) \geq 0$ for all $(q,\xi) \in\calQ \times  R^{n+m}$.
Therefore,  we deduce from  \eqref{enid-eps}
that
\[
\ene t{q_\eps(t)} \leq \ene 0{q_\eps (0)} + \int_0^t \partial_t \ene r{q_\eps(r)} \dd r
\leq C+C_{1,E}\int_0^t \ene r{q_\eps(r)} \dd r,
\]
where we have
used the power control from \eqref{ass:E}
and
 the fact that $\ene 0{q_\eps (0)}  \leq C$, since the $(q_\eps (0))_\eps$ is bounded. The Gronwall Lemma then yields \eqref{bound-energies},
and \eqref{bound-q-eps} ensues from the coercivity of $\calE$.
 Using again the power control,
we ultimately infer from \eqref{enid-eps} that
\begin{equation}
\label{en-dissip-est}
\int_0^T
 \calR_\eps (q_\eps(r), q_\eps'(r)) + \calR_\eps^* (q_\eps(r),-\rmD_q \calE(r,q_\eps(r)))  \dd r \leq C.
 \end{equation}
 In particular,
 $\int_0^T \calR_0 (q_\eps(r), z_\eps'(r)) \dd r \leq C$, whence \eqref{est-z}  by \eqref{ass:dissip-pot-R}.
 \end{proof}

The derivation of  the  $L^1(0,T;\R^n)$-estimate for $(u_\eps')_\eps$
similar to \eqref{est-z}
clearly does not follow from \eqref{enid-eps},
 which only yields $\int_0^T \eps^{\alpha} |u_\eps'(r)|^2 \dd r \leq C$ via \eqref{en-dissip-est}
and \eqref{ass:dissip-pot-Vu}.
 It is indeed more involved,
 and, as already mentioned, it  strongly relies on the uniform convexity of $\calE$ with respect to $u$. Furthermore,
  we are able to obtain it only under the simplifying condition that
the dissipation potential
 $\vpotname u$ in fact \emph{does not} depend on the state variable $q$,
 and under an additional well-preparedness condition on the data $(q_\eps^0)_\eps$,
 ensuring that the forces $\rmD_u \ene{0}{q_\eps^0}$ tend to zero, as $\eps \down 0$, with rate $\eps^\alpha$.
\begin{proposition}
\label{prop:aprio-eps}
Let $\alpha>0$.
Assume \eqref{ass:dissip-pot-R},
\eqref{ass:dissip-pot-Vz},
 \eqref{ass:dissip-pot-Vu}, and \eqref{ass:E}.
 In addition, suppose that  
 \begin{equation}
 \label{need-esti}
 \tag{$\mathrm{V}_{u,1}$}
 \rmD_q \vcofname{u} (q) = 0 \quad \text{for all } q \in \calQ,
 \end{equation}
  \begin{equation}
 \label{en-plus}
 \tag{$\mathrm{E}_{1}$}
 \begin{aligned}
&\calE \in \rmC^2 ([0,T]\times \calQ)
\quad \text{and}
\\
&\exists\, \mu>0  \ \forall\, (t,q) \in  [0,T]\times \calQ\, : \
\rmD_u^2 \ene tq\geq \mu \mathbb{I}_{\R^{n \times n}}  \quad
\text{\textbf{(uniform convexity w.r.t.\ $u$),}}
\end{aligned}
\end{equation}
 and that the initial data $(q_\eps^0)_\eps $  complying with  \eqref{bded-data}
 also fulfill
 \begin{equation}
 \label{well-prep}
 |\rmD_u \ene{0}{q_\eps^0}| \leq C \eps^{\alpha}.
 \end{equation}
 Then,
 there exists a constant $C>0$ such that for every $\eps>0$
 \begin{equation}
 \label{est-u}
 \|u_\eps'(t)\|_{L^1(0,T;\R^{n})} \leq C.
 \end{equation}
\end{proposition}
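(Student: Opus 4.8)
I would proceed as follows. The idea is to read off \eqref{est-u} from the momentum equation \eqref{eq-u}, which under the state-independence assumption \eqref{need-esti} reads
\[
\eps^\alpha \vcofname u\, u_\eps'(t) = -\rmD_u \ene{t}{q_\eps(t)} \qquad \foraa t\in(0,T),
\]
with $\vcofname u$ a fixed positive-definite matrix. Setting $\xi_\eps(t):=\rmD_u\ene{t}{q_\eps(t)}$ and
\[
G_\eps(t):=\big\langle \vcofname u^{-1}\xi_\eps(t),\xi_\eps(t)\big\rangle = \eps^{2\alpha}\big\langle \vcofname u\, u_\eps'(t),u_\eps'(t)\big\rangle ,
\]
the estimate \eqref{est-u} is, by the two-sided bounds in \eqref{ass:dissip-pot-Vu} and \eqref{inv-vcof-1}, equivalent to $\int_0^T \eps^{-\alpha}\sqrt{G_\eps}\dd t\le C$. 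From the energy--dissipation balance \eqref{enid-eps} one only gets $\int_0^T \eps^{-\alpha}G_\eps\dd t\le C$, which degenerates as $\eps\down0$; closing this gap is the main difficulty, and the plan is to upgrade it to a \emph{pointwise} differential inequality for $G_\eps$ with a damping rate of order $\eps^{-\alpha}$, the source of the damping being the uniform convexity \eqref{en-plus}.

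Concretely, since $\calE\in\rmC^2$ and, by Lemma \ref{lemma:2.1}(b), all the curves $q_\eps$ range in a fixed ball $B\subset\calQ$, the curve $\xi_\eps$ belongs to $H^1(0,T;\R^n)$, with $\xi_\eps'(t)=\partial_t\rmD_u\calE(t,q_\eps(t))+\rmD_u^2\calE(t,q_\eps(t))\,u_\eps'(t)+\rmD_z\rmD_u\calE(t,q_\eps(t))\,z_\eps'(t)$ a.e.; hence $G_\eps$ is absolutely continuous and, inserting $\vcofname u^{-1}\xi_\eps=-\eps^\alpha u_\eps'$ from the momentum equation,
\[
\frac{\dd}{\dd t}G_\eps(t) = 2\big\langle \vcofname u^{-1}\xi_\eps, \xi_\eps'\big\rangle = -2\eps^\alpha\big\langle u_\eps', \partial_t\rmD_u\calE + \rmD_u^2\calE\, u_\eps' + \rmD_z\rmD_u\calE\, z_\eps'\big\rangle .
\]
I would then estimate $\langle u_\eps',\rmD_u^2\calE\, u_\eps'\rangle\ge\mu|u_\eps'|^2$ by \eqref{en-plus}; bound $|\partial_t\rmD_u\calE(t,q_\eps(t))|$ and $|\rmD_z\rmD_u\calE(t,q_\eps(t))|$ by an $\eps$-independent constant, using $\calE\in\rmC^2$ and $q_\eps(t)\in B$; and convert everything back to $G_\eps$ via \eqref{ass:dissip-pot-Vu} (the quantities $G_\eps$ and $\eps^{2\alpha}|u_\eps'|^2$ being comparable). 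This gives, for a.a.\ $t\in(0,T)$,
\[
\frac{\dd}{\dd t}G_\eps(t)\le -c_1\,\eps^{-\alpha}G_\eps(t) + c_2\,\sqrt{G_\eps(t)}\,\big(1+|z_\eps'(t)|\big),
\]
with $c_1,c_2>0$ depending only on $\mu$, $\vcofname u$, and the $\rmC^2$-norm of $\calE$ on $[0,T]\times B$.

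Finally I would integrate. Dividing by $2\sqrt{G_\eps}$ --- legitimized by first replacing $G_\eps$ with $G_\eps+\delta$ and then letting $\delta\down0$, to handle the zeros of $G_\eps$ --- yields $\frac{\dd}{\dd t}\sqrt{G_\eps}\le -\frac{c_1}{2}\eps^{-\alpha}\sqrt{G_\eps}+\frac{c_2}{2}(1+|z_\eps'|)$, so Gronwall's lemma in Duhamel form gives
\[
\sqrt{G_\eps(t)} \le e^{-c_1\eps^{-\alpha}t/2}\,\sqrt{G_\eps(0)} + \frac{c_2}{2}\int_0^t e^{-c_1\eps^{-\alpha}(t-r)/2}\,\big(1+|z_\eps'(r)|\big)\dd r ,
\]
whence, integrating over $[0,T]$ and using Fubini, $\int_0^T\sqrt{G_\eps}\dd t \le \frac{2\eps^\alpha}{c_1}\big(\sqrt{G_\eps(0)}+\frac{c_2}{2}(T+\|z_\eps'\|_{L^1(0,T;\R^m)})\big)$. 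Going back via $|u_\eps'|\le C\eps^{-\alpha}\sqrt{G_\eps}$, the factor $\eps^{-\alpha}$ is cancelled by the $\eps^\alpha$, so that $\|u_\eps'\|_{L^1(0,T;\R^n)}\le C(\sqrt{G_\eps(0)}+1+\|z_\eps'\|_{L^1(0,T;\R^m)})$. Now $\|z_\eps'\|_{L^1(0,T;\R^m)}\le C$ uniformly by Lemma \ref{lemma:2.1}(c), while the well-preparedness \eqref{well-prep} gives $G_\eps(0)\le C\,|\rmD_u\ene 0{q_\eps^0}|^2\le C\eps^{2\alpha}$, so $\sqrt{G_\eps(0)}\le C\eps^\alpha\le C$ for $\eps\le1$; this yields \eqref{est-u}. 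I expect the first part --- singling out the functional $G_\eps$ and recognizing that uniform convexity forces a damping of the exact order $\eps^{-\alpha}$ needed to compensate the degeneracy of $\calR_\eps$ in $u'$ --- to be the crux, the rest being routine up to the minor $\sqrt{\cdot}$-differentiation issue.
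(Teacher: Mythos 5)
Your proof is correct and is essentially the paper's argument in disguise: since the momentum equation gives $\rmD_u\calE(t,q_\eps(t))=-\eps^\alpha\vcofname u u_\eps'(t)$, your quantity $G_\eps$ equals $2\eps^{2\alpha}\vpotname u(u_\eps')$, and your differential inequality, square-root regularization, Gronwall/Duhamel step, integration via Fubini--Young, and use of \eqref{well-prep} and Lemma \ref{lemma:2.1}(c) reproduce exactly the paper's estimate for $\nu_\eps=\sqrt{\calV_{\mathsf u}(u_\eps')}$ up to the factor $\eps^\alpha$. The only (minor, harmless) refinements are that you differentiate the composed force $t\mapsto\rmD_u\calE(t,q_\eps(t))$ rather than the equation itself, thus avoiding any appeal to $u_\eps''$, and you retain the $\partial_t\rmD_u\calE$ term, absorbed into the $(1+|z_\eps'|)$ factor.
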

\begin{proof}
It follows from  \eqref{need-esti} that  there exists a given matrix $\overline{\bbV}_{\mathsf{u}} \in  \R^{n\times n}$
such that
\begin{equation}
\label{yes-later}
\vcof uq{} \equiv \overline{\mathbb{V}}_{\mathsf{u}} \quad \text{for all $q\in\calQ$,}
\end{equation}
 so that
\begin{equation}
\label{v-const}
\vpot uq{u'} = \calV_{\mathsf{u}} (u') :=
\frac12 \langle   \overline{\mathbb{V}}_{\mathsf{u}} u',u' \rangle.
\end{equation}
  Therefore \eqref{eq-u} reduces to
\begin{equation}
\label{simpler}
\eps^\alpha \overline{\mathbb{V}}_{\mathsf{u}} u_\eps'(t) + \rmD_u
\enet{t}{u_\eps(t)}{z_\eps(t)} =0
\qquad \foraa\, t \in (0,T).
\end{equation}
We differentiate \eqref{simpler} in time,
and test the resulting equation by $u_\eps'$.  Thus we obtain for
almost all $t\in (0,T)$
\begin{equation}
\label{test-nu}
\begin{aligned}
0 & = \eps^\alpha \langle  \overline{\mathbb{V}}_{\mathsf{u}}
     u_\eps{''}(t), u_\eps'(t) \rangle
+ \langle \rmD_u^2 \enet{t}{u_\eps(t)}{z_\eps(t)}  [u_\eps'(t)], u_\eps'(t) \rangle
+ \langle \rmD_{u,z}^2  \enet{t}{u_\eps(t)}{z_\eps(t)} [ u_\eps'(t)], z_\eps'(t) \rangle
\\
 &
  \doteq S_1+S_2+S_3,
  \end{aligned}
\end{equation}
where $ \rmD_{u,z}^2 $ denotes the second-order mixed derivative.
Observe that
\begin{align*}
&
S_1 = \frac{\eps^\alpha}2 \frac{\dd}{\dd t } \calV_{\mathsf{u}} (u_\eps'),
&&
S_2\geq \mu |u_\eps'^2| \geq \tilde{\mu}  \calV_{\mathsf{u}} (u_\eps'),
\\
&
S_3 \geq - C |u_\eps'||z_\eps'|  \geq -C \sqrt{\calV_{\mathsf{u}} (u_\eps')} |z_\eps'|.
\end{align*}
Indeed, to estimate $S_2$ we have used the uniform convexity of
$\calE(t,\cdot,z)$, and the growth of $\vpotname u$ from
\eqref{ass:dissip-pot-Vu}. The estimate for $S_3$ follows from
$\sup_{t\in (0,T)} | \rmD_{u,z}^2 \enet{t}{u_\eps(t)}{z_\eps(t)}| \leq
C$, due to \eqref{bound-q-eps} and the fact that $\rmD_{u,z}^2 \calE $
is continuous on $ [0,T] \times \calQ$, and again from
\eqref{ass:dissip-pot-Vu}.  We thus infer from \eqref{test-nu} that
\[
 \frac{\dd }{\dd t}   \calV_{\mathsf{u}} (u_\eps'(t)) + \frac{\tilde{\mu}}{\eps^\alpha} \calV_{\mathsf{u}} (u_\eps'(t)) \leq
   \frac{C}{\eps^\alpha} \sqrt{\calV_{\mathsf{u}} (u_\eps'(t))} |z_\eps'(t)| \qquad \foraa\, t \in (0,T),
\]
 which rephrases as
\[
\nu_\eps(t) \nu_\eps'(t) + \frac{\tilde{\mu}}{\eps^\alpha} \nu_\eps^2(t) \leq  \frac{C}{\eps^\alpha}\nu_\eps(t)  |z_\eps'(t)|
\]
where we have used the place-holder $\nu_\eps(t) :=
\sqrt{\calV_{\mathsf{u}} (u_\eps'(t))}$.  We now argue as in
\cite{Miel08?DEMF} and observe that, without loss of generality, we
may suppose that $\nu_\eps(t) >0$ (otherwise, we replace it by
$\tilde{\nu}_\eps = \sqrt{\nu_\eps +\delta}$, which satisfies the same
estimate, and then let $\delta \down 0$), Hence, we deduce
\[
 \nu_\eps'(t) + \frac{\tilde{\mu}}{\eps^\alpha} \nu_\eps(t) \leq
 \frac{C}{\eps^\alpha}|z_\eps'(t)|.
\]
Applying the Gronwall lemma 
we obtain
\begin{equation}
 \label{integrate-time}
 \nu_\eps(t) \leq C \exp\left(-\frac{\tilde \mu}{\eps^\alpha}t
 \right) \nu_\eps(0)  + \frac{C}{\eps^\alpha}  \int_0^t \exp
 \left(-\frac{\tilde \mu}{\eps^\alpha} (t-r)\right) |z_\eps'(r)| \dd
 r  \doteq a_1^{\eps}(t) +  a_2^{\eps}(t)
\end{equation}
for all $t\in (0,T)$.  We integrate the above estimate on $(0,T)$.
Now, observe that \eqref{well-prep} guarantees that $\nu_\eps(0)=
\sqrt{\calV_{\mathsf{u}} (u_\eps'(0))} \leq C
|\overline{V}_{\mathsf{u}} u_\eps'(0)| = C \eps^{-\alpha} |\rmD\calE
(0,u_\eps(0))| \leq C $.  Hence, we find $\|a_1^{\eps}\|_{L^1(0,T)}
\leq C \nu_\eps (0) \leq C_1$.
In order to estimate $a_2^\eps$ we use the Young inequality for
convolutions, which yields
\[
 \|a_2^{\eps}\|_{L^1(0,T)} =  \frac{C}{\eps^\alpha} \int_0^T   \int_0^t
 \exp \left(-\frac{\tilde \mu}{4\eps^\alpha} (t-r) \right) |z_\eps'(r)|
 \dd  r \dd t  \leq \frac{C}{\eps^\alpha} \left( \int_0^T  \exp
  \left(-\frac{\tilde \mu}{\eps^\alpha} t \right) \dd t  \right)
 \left( \int_0^T |z_\eps'(t)| \dd t \right)  \leq C_2
\]
where we have exploited the a priori estimate \eqref{est-z} for
$z_\eps'$. Thus,  \eqref{integrate-time} implies \eqref{est-u},
and we are done.
 \end{proof}

\section{Limit passage with vanishing viscosity}
\label{s:3}
In this section, we assume that we are given a sequence $(q_\eps)_\eps
\subset H^1(0,T;\calQ)$ of solutions to \eqref{gen-grad-syst},
satisfying the initial conditions $q_\eps(0) = q_\eps^0$, such that
estimate \eqref{L1-est} holds.  As we have shown in Proposition
\ref{prop:aprio-eps}, the well-preparedness \eqref{well-prep} of the
initial data $(q_\eps^0)_\eps$, the condition that the dissipation
potential $\vpotname u$ does not depend on the state $q$, and the
uniform convexity \eqref{en-plus} of $\calE$ with respect to $u$
guarantee the validity of \eqref{L1-est}. However, these conditions
are not needed for the vanishing viscosity analysis.
Therefore, hereafter
we will no longer impose
  \eqref{well-prep}, we will allow for a state-dependent  dissipation potential $\vpotname u = \vpot uq{u'}$,
  and we will stay with the basic conditions \eqref{ass:E} on $\calE$.

\paragraph{\bf  The energy-dissipation balance.}
Following the variational approach of \cite{MRS09,MRS10,mielke-rossi-savare2013}, we will pass to the limit in (a
\emph{parameterized} version of) the energy identity \eqref{enid-eps}.

Preliminarily, let us
explicitly calculate the convex-conjugate of the dissipation potential $\calR_\eps$ \eqref{form-calR-eps}.
\begin{lemma}
\label{l:cvx-conj-Reps}
Assume \eqref{ass:dissip-pot-R},  \eqref{ass:dissip-pot-Vz},  and \eqref{ass:dissip-pot-Vu}. Then, the Fenchel-Moreau conjugate  \eqref{calReps} of
$\calR_\eps$ is given by
\begin{equation}
\label{to-refer-to}
\calR_\eps^* (q,\xi)=  \frac1{\eps} \conjz {z}{q}{\zeta}  + \frac1{\eps^\alpha} \moreau u{q}{\eta} \qquad \text{for all } q \in \calQ  \text{ and  } \xi = (\eta,\zeta) \in \R^{n+m},
\end{equation}
where $\moreau u{q}{\cdot}$ is the conjugate of $\vpot u{q}{\cdot}$, and
\begin{equation}
\label{conju-z}
\conjz {z}{q}{\zeta} = \min_{\omega \in \stab  q} \moreau z{q}{\zeta-\omega} \qquad \text{with } \stab q:=   \partial\calR_0 (q,0),
\end{equation}
 $\moreau z{q}{\cdot}$  is  the conjugate of $\vpot z{q}{\cdot}$, while
 $\calW_{\mathsf{z}}^*$
 is the conjugate of $\calR_0 + \calV_{\mathsf{z}}$.
\end{lemma}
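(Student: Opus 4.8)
The point is that $\calR_\eps(q,\cdot)$ splits as the sum of a function of $u'$ alone, namely $\eps^\alpha\vpot uq{u'}$, and a function of $z'$ alone, namely $\calR_0(q,z')+\eps\vpot zq{z'}$. Hence in the supremum \eqref{calReps} defining $\calR_\eps^*(q,\xi)$ with $\xi=(\eta,\zeta)$ the variables $u'\in\R^n$ and $z'\in\R^m$ decouple, and
\[
\calR_\eps^*(q,\xi)=\sup_{u'\in\R^n}\bigl(\langle\eta,u'\rangle-\eps^\alpha\vpot uq{u'}\bigr)+\sup_{z'\in\R^m}\bigl(\langle\zeta,z'\rangle-\calR_0(q,z')-\eps\vpot zq{z'}\bigr).
\]
So the whole computation reduces to evaluating the two suprema separately.

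\textbf{The $u'$-supremum.} Since $\vcof uq{}$ is positive definite by \eqref{ass:dissip-pot-Vu}, the map $u'\mapsto\eps^\alpha\vpot uq{u'}=\frac{\eps^\alpha}2\langle\vcof uq{}u',u'\rangle$ is a nondegenerate quadratic form, and the scaling rule $(\lambda f)^*(\eta)=\lambda f^*(\eta/\lambda)$ together with the $(-1)$-homogeneity in $\lambda$ of the conjugate of a quadratic gives directly
\[
\sup_{u'\in\R^n}\bigl(\langle\eta,u'\rangle-\eps^\alpha\vpot uq{u'}\bigr)=\frac1{\eps^\alpha}\,\moreau uq{\eta},
\]
with $\moreau uq{\cdot}=\frac12\langle\vcofinv uq{\cdot},\cdot\rangle$ as recalled before the statement.

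\textbf{The $z'$-supremum.} Because $\calR_0(q,\cdot)$ is convex, continuous and $1$-positively homogeneous, it is the support function of the set $\stab q=\partial\calR_0(q,0)$; by the characterization \eqref{charact-1-homog} this set is closed and convex, and by the upper bound in \eqref{ass:dissip-pot-R} it is contained in the ball of radius $C_{1,R}$, hence nonempty (it contains $0$), convex and compact, and $\calR_0(q,z')=\sup_{\omega\in\stab q}\langle\omega,z'\rangle$. Therefore
\[
\langle\zeta,z'\rangle-\calR_0(q,z')-\eps\vpot zq{z'}=\inf_{\omega\in\stab q}\bigl(\langle\zeta-\omega,z'\rangle-\eps\vpot zq{z'}\bigr).
\]
Taking the supremum over $z'$ and exchanging it with the infimum over $\omega$ — legitimate since $\stab q$ is convex and compact and the expression in brackets is affine (hence concave) in $\omega$ and strictly concave and coercive in $z'$, so that Sion's minimax theorem applies; equivalently, one may invoke the fact that in finite dimensions the conjugate of a sum of everywhere-finite convex functions is the exact infimal convolution of the conjugates, $\calR_0^*(q,\cdot)$ being the indicator function of $\stab q$ — one obtains, again by the quadratic scaling rule,
\[
\sup_{z'\in\R^m}\bigl(\langle\zeta,z'\rangle-\calR_0(q,z')-\eps\vpot zq{z'}\bigr)=\inf_{\omega\in\stab q}\,\frac1{\eps}\,\moreau zq{\zeta-\omega}=\frac1{\eps}\,\conjz zq{\zeta},
\]
where the infimum is attained because $\omega\mapsto\moreau zq{\zeta-\omega}$ is continuous on the compact set $\stab q$, which is precisely \eqref{conju-z}. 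Specializing this identity to $\eps=1$ shows that $\conjzname zq{\cdot}$ is the Fenchel–Moreau conjugate of $\calR_0(q,\cdot)+\vpot zq{\cdot}$. Adding the two suprema yields \eqref{to-refer-to}.

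\textbf{Main obstacle.} Everything here is routine convex analysis; the only step requiring a word of justification is the sup–inf interchange (and attainment of the minimum over $\stab q$), which is why I would either cite Sion's theorem or, more cleanly, reduce to the standard infimal-convolution formula for the conjugate of a sum, noting that the qualification condition holds trivially because $\calR_0(q,\cdot)$ and $\vpot zq{\cdot}$ are finite on all of $\R^m$.
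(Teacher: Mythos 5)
Your proof is correct and follows essentially the same route as the paper: split $\calR_\eps(q,\cdot)$ into its $u'$- and $z'$-parts, handle the quadratic term $\eps^\alpha\vpotname u$ by the scaling rule, and compute the conjugate of $\calR_0(q,\cdot)+\eps\vpot zq{\cdot}$ as the (exact) infimal convolution of the indicator of $\stab q$ with $\tfrac1\eps\moreau zq{\cdot}$. The only difference is that where the paper simply cites the $\inf$-$\sup$ convolution formula from the literature, you justify it directly via the support-function/Sion argument, which is a fine substitute.
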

\begin{proof}
Since $\calR_\eps(q,\cdot)$ is given by the sum of a contribution in the sole variable $z'$ and another in the sole
variable $u'$, we have
\[
\calR_\eps^* (q,\xi)=
(\eps^\alpha \vpotname{u})^*(q,\eta) + \conjzspe {z}{\eps}{q}{\zeta} \qquad
 \text{for all $\xi = (\eta,\zeta) \in \R^{n + m}$} \qquad
\]
where we
have used the place-holder
$ 
\conjzspe {z}{\eps}{q}{\zeta} := \left(\calR_0 (q,\cdot) + \eps \vpot z{q}{\cdot}\right)^* (\zeta).
$ 
Now, taking into account that $\vpotname u$ is quadratic, there holds
\[
(\eps^\alpha \vpotname{u})^*(q,\eta) =
\eps^\alpha \vpotname{u}^* \left(q, \frac1{\eps^\alpha} \eta \right) =
 \frac1{\eps^\alpha} \moreau u{q}{\eta},
\]
whereas the $\inf$-$\sup$ convolution formula (see e.g.\
\cite{IofTih}) yields $\conjzspe {z}{\eps}{q}{\zeta} = \frac1{\eps}
\conjz {z}{q}{\zeta}$ with $\conjz {z}{q}{\cdot}$ from
\eqref{conju-z}.
\end{proof}

In view of \eqref{to-refer-to}, the energy identity  \eqref{enid-eps} rewrites as
\begin{equation}
\label{enid-eps-expl}
\begin{aligned}
&
\ene t{q_\eps(t)}   +
\int_s^t
 \calR_0 (q_\eps(r), z_\eps'(r)) +
 \eps \vpot{z}{q_\eps(r)}{z_\eps'(r)} +\eps^\alpha \vpot{u}{q_\eps(r)}{u_\eps'(r)} \dd r
 \\ &\quad  +
\int_s^t
 \frac1{\eps}\conjz {z}{q_\eps(r)}{-\rmD_z \calE(r,q_\eps(r))}
 +\frac{1}{\eps^\alpha}\moreau {u}{q_\eps(r)}{-\rmD_u \calE(r,q_\eps(r))}
 \dd r
 \\
 &
 = \ene s{q_\eps(s)} + \int_s^t \partial_t \ene r{q_\eps(r)} \dd r.
 \end{aligned}
\end{equation}
In fact,  the second and the third integral terms on the left-hand side of \eqref{enid-eps-expl} reflect the competition between the tendency of the system to be governed by \emph{viscous} dissipation both for the variable $z$ and for the variable  $u$, and its tendency to  fulfill the \emph{local stability} condition
\[
\conjz {z}{q(t)}{-\rmD_z \calE(t,q(t))} = 0 \quad \text{i.e.} \quad -\rmD_z \calE(t,q(t)) \in \stab  {q(t)} \qquad \foraa\, t \in (0,T)
\]
for $z$, and the equilibrium condition
\[
\moreau {u}{q(t)}{-\rmD_u \calE(r,q(t))} =0 \quad \text{i.e.} \quad -\rmD_u \calE(t,q(t))=0 \qquad \foraa\, t \in (0,T)
\]
for $u$, cf.\ also the discussion in Remark \ref{rmk:switch}.
\paragraph{\bf The parameterized energy-dissipation balance.}
We now consider the parameterized curves
$(\sft_\eps,\sfq_\eps) : [0,S_\eps] \to  [0,T] \times \calQ$, where  for every $\eps>0$ the rescaling function
$\sft_\eps: [0,S_\eps] \to  [0,T]$ is strictly increasing, and
$\sfq_\eps(s) = q_\eps(\sft_\eps(s)).$
We shall suppose that $\sup_{\eps>0} S_\eps<\infty$, and that
\begin{equation}
\label{normalization}
\exists\, C>0 \quad \forall\, \eps>0 \quad \forall\, s \in [0,S_\eps]\, : \qquad
    \sft_\eps'(s) + |\sfq_\eps'(s)| \leq C.
\end{equation}
\begin{remark}
\upshape
\label{rmk:arclength}
For instance, as in \cite{ef-mie06, MRS09} we might
choose
\begin{equation}
\label{arclength-choice}
\sft_\eps:= \sigma_\eps^{-1} \quad \text{ with } \sigma_\eps(t):= \int_0^t \left( 1+|q_\eps'(r)| \right) \dd r,
\end{equation}
and set $S_\eps:= \sigma_\eps (T)$. In fact,
estimate \eqref{L1-est} ensures that $\sup_\eps S_\eps <\infty$. With the choice
\eqref{arclength-choice}
for $\sft_\eps$, the functions $(\sft_\eps,\sfq_\eps)$
fulfill the normalization condition
\[
\sft_\eps'(s) + |\sfq_\eps'(s)| =1 \qquad \text{for almost all } s\in (0,S_\eps).
\]
\end{remark}

For the  parameterized curves
$(\sft_\eps,\sfq_\eps)$, the energy-dissipation balance \eqref{enid-eps-expl} reads
\begin{equation}
\label{enid-eps-param}
\begin{aligned}
&
\ene {\sft_\eps(s_2)}{\sfq_\eps(s_2)} +\int_{s_1}^{s_2}
\Me{\eps}{\sfq_\eps(r)}{\sft_\eps'(r)}{\sfq'_\eps(r)}{-\rmD_q\ene{\sft_\eps(r)}{\sfq_\eps(r)}} \
\dd r
\\
& = \ene {\sft_\eps(s_1)}{\sfq_\eps(s_1)}
+ \int_{s_1}^{s_2} \partial_t \ene{\sft_\eps(r)}{\sfq_\eps(r)} \sft_\eps'(r) \dd  r \qquad \text{for all } 0 \leq s_1 \leq s_2 \leq S,
\end{aligned}
\end{equation}
where we have used the dissipation functional
\begin{equation}
\label{Me-def}
\begin{aligned}
\Me{\eps}{q}{\tau}{q'}{\xi}  &  =
\Me {\eps}{q}{\tau}{(u',z')}{(\eta,\zeta)}
\\ &
:=
\calR_0 (q,z') +\frac{\eps}{\tau} \vpot zq{z'} +
\frac{\eps^\alpha}{\tau} \vpot uq{u'} +
\frac\tau\eps \conjz {z}{q}{\zeta} + \frac\tau{\eps^\alpha} \moreau uq\eta.
\end{aligned}
\end{equation}
The passage from  \eqref{enid-eps-expl} to \eqref{enid-eps-param} follows from the change of variables
$t  \to \sft_\eps (r)$, whence $\dd t \to  \sft_\eps'(r) \dd r$, while $q_\eps'(t) \to  \frac{1}{ \sft_\eps'(r) } \sfq_\eps'(r)$.
In order to pass to the limit in \eqref{enid-eps-param} as $\eps\down0$, it is crucial to investigate the
$\Gamma$-convergence properties of the family of functionals $(\mathcal{M}_\eps)_\eps$. The following result reveals that
the $\Gamma$-limit of $(\mathcal{M}_\eps)_\eps$ depends on whether
the parameter $\alpha$ is above, equal, or below the threshold value $1$.
Let us point out that, for $\alpha\in (0,1)$, setting $\delta = \eps^{\alpha}$ we rewrite $\mathcal{M}_\eps$ as
\begin{equation}
\label{specular}
\Me {\eps}{q}{\tau}{(u',z')}{(\eta,\zeta)} =  \calR_0 (q,z')
+\frac{\delta^{1/\alpha}}{\tau} \vpot zq{z'}
+
\frac{\delta}{\tau} \vpot uq{u'}
+\frac\tau{\delta^{1/\alpha}} \conjz {z}{q}{\zeta}
 + \frac\tau{\delta} \moreau uq\eta
\end{equation}
with $1/\alpha >1$.
It is thus natural to expect that the upcoming results will be specular in the cases $\alpha\in (0,1)$ and $\alpha>1$.
\begin{proposition}
\label{prop-Gamma-conv}
Assume \eqref{ass:dissip-pot-R}, \eqref{ass:dissip-pot-Vz}, \eqref{ass:dissip-pot-Vu}, and \eqref{ass:E}.
Then,
the functionals $(\mathcal{M}_\eps)_\eps$ $\Gamma$-converge as $\eps\down 0$ to
$\mathcal{M}_0: \calQ\times [0,\infty) \times  \calQ \times \R^{n+m} \to [0,\infty] $
defined by
\begin{equation}
\label{basic-one}
\Mo q\tau{(u',z')}{(\eta,\zeta)}:= \calR_0 (q,z') + \Mored q\tau{(u',z')}{(\eta,\zeta)},
\end{equation}
where for $\tau>0$ we have
\begin{equation}
\label{Mored-basic}
\Mored q\tau{(u',z')}{(\eta,\zeta)} = \left\{ \begin{array}{ll}
0 & \text{if }  \conjz {z}{q}{\zeta} =\moreau uq{\eta} =0,
\\
\infty & \text{if } \conjz {z}{q}{\zeta} +\moreau uq{\eta} >0,
\end{array}
\right.
\end{equation}
while for $\tau=0$ we have the following cases:
\begin{itemize}
\item For $\alpha>1$
\begin{equation}
\label{def-Mop1}
\Mored q0{(u',z')}{(\eta,\zeta)} =
\left\{
\begin{array}{ll}
2 \sqrt{\vpot{u}{q}{u'}}\, \sqrt{\moreau uq{\eta}} & \text{ if }  \vpot{z}{q}{z'}=0,
\\
2\sqrt{\vpot{z}{q}{z'}}  \, \sqrt{\conjz {z}{q}{\zeta}} & \text{ if }   \moreau uq{\eta} =0,
\\
\infty & \text{ if } \vpot{z}{q}{z'}\, \moreau uq{\eta} >0,
\end{array}
\right.
\end{equation}
\item For $\alpha=1$
\begin{equation}
\label{def-Mou1}
\Mored q0{(u',z')}{(\eta,\zeta)} = 2 \sqrt{\vpot{z}{q}{z'} + \vpot{u}{q}{u'}}\,\sqrt{\conjz {z}{q}{\zeta} +\moreau uq{\eta} },
\end{equation}
\item For $\alpha\in (0,1)$
\begin{equation}
\label{def-Mom1}
\Mored q0{(u',z')}{(\eta,\zeta)} =
\left\{
\begin{array}{ll}
2 \sqrt{\vpot{u}{q}{u'}} \, \sqrt{\moreau uq{\eta}} & \text{ if }  \conjz {z}{q}{\zeta}=0,
\\
2 \sqrt{\vpot{z}{q}{z'}} \,  \sqrt{\conjz {z}{q}{\zeta}} & \text{ if }   \vpot{u}{q}{u'} =0,
\\
\infty & \text{ if } \vpot{u}{q}{u'}\, \conjz {z}{q}{\zeta} >0.
\end{array}
\right.
\end{equation}
\end{itemize}
Moreover, if $(\tau_\eps,q_\eps') \weakto (\tau,q') $ in $L^1 (0,S;
(0,T)\times \calQ)$ and if $(q_\eps,\xi_\eps)\to (q,\xi) $ in $L^1
(0,S; \calQ \times \R^{n+m})$,
 then for every $0\leq s_1\leq s_2\leq S$
\begin{equation}
\label{ioffe-refined}
\liminf_{\eps \down 0}
\int_0^S \Me{\eps}{q_\eps(s)}{\tau_\eps(s)}{q_\eps'(s)}{\xi_\eps(s)} \dd s
\geq \int_0^S \Mo{q(s)}{\tau(s)}{q'(s)}{\xi(s)} \dd s\,.
\end{equation}
\end{proposition}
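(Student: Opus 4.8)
The plan is to split $\Me{\eps}{q}{\tau}{q'}{\xi}$ into the $\eps$-independent term $\calR_0(q,z')$, which is continuous and therefore passes to the limit trivially, and the \emph{reduced} part $\Mored{}{}{}{}{}$ collecting the four remaining terms, for which I establish $\Gamma$-convergence by proving the pointwise $\liminf$-inequality and exhibiting a recovery sequence. It is convenient to abbreviate $a:=\vpot zq{z'}$, $b:=\vpot uq{u'}$, $c:=\conjz zq\zeta$, $d:=\moreau uq\eta$; all four are continuous in $(q,q',\xi)$, so $a_\eps\to a$, $b_\eps\to b$, $c_\eps\to c$, $d_\eps\to d$ along any converging sequence. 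The engine is the elementary inequality $\lambda^{-1}p+\lambda r\ge 2\sqrt{pr}$, applied to the pair $\bigl(\tfrac{\eps}{\tau_\eps}a_\eps,\tfrac{\tau_\eps}{\eps}c_\eps\bigr)$ (saturated for $\tau_\eps\sim\eps$) and to the pair $\bigl(\tfrac{\eps^\alpha}{\tau_\eps}b_\eps,\tfrac{\tau_\eps}{\eps^\alpha}d_\eps\bigr)$ (saturated for $\tau_\eps\sim\eps^\alpha$), together with the observation that for $\alpha\neq1$ these two optimal scalings are mutually incompatible.

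For the $\liminf$-inequality along $(\tau_\eps,q_\eps,q_\eps',\xi_\eps)\to(\tau,q,q',\xi)$: if $\tau>0$ then $\tfrac{\eps}{\tau_\eps}a_\eps,\tfrac{\eps^\alpha}{\tau_\eps}b_\eps\to0$ while $\tfrac{\tau_\eps}{\eps}c_\eps,\tfrac{\tau_\eps}{\eps^\alpha}d_\eps\to+\infty$ unless $c=d=0$, which reproduces \eqref{Mored-basic}. If $\tau=0$ and $\alpha=1$, a single use of the inequality on the grouped terms gives $\Me{\eps}{q_\eps}{\tau_\eps}{q_\eps'}{\xi_\eps}\ge\calR_0(q_\eps,z_\eps')+2\sqrt{(a_\eps+b_\eps)(c_\eps+d_\eps)}$, and passing to the limit yields \eqref{def-Mou1}. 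If $\tau=0$ and $\alpha>1$: when $a>0$ and $d>0$, from $\mathcal M_\eps\ge\tfrac{\eps}{\tau_\eps}a_\eps$ and $\mathcal M_\eps\ge\tfrac{\tau_\eps}{\eps^\alpha}d_\eps$ one gets $\mathcal M_\eps^2\ge\eps^{1-\alpha}a_\eps d_\eps\to+\infty$; when $a=0$ (hence $z'=0$ and $\calR_0(q,z')=0$ by $1$-homogeneity) we discard the two $z$-terms and apply the inequality to the $u$-pair, obtaining $\liminf\mathcal M_\eps\ge2\sqrt{bd}$; when $d=0$ we symmetrically keep only the $z$-terms and get $\liminf\mathcal M_\eps\ge\calR_0(q,z')+2\sqrt{ac}$. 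This gives \eqref{def-Mop1}. The case $\alpha\in(0,1)$ follows from the rewriting \eqref{specular} with $\delta=\eps^\alpha$, $1/\alpha>1$, i.e.\ it is the case $\alpha>1$ with $u$ and $z$ interchanged, and produces \eqref{def-Mom1}.

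For the recovery sequence I keep $(q_\eps,q_\eps',\xi_\eps)\equiv(q,q',\xi)$ and only tune $\tau_\eps\to\tau$. If $\tau>0$ either $\mathcal M_0=+\infty$ (nothing to prove) or $c=d=0$ and $\tau_\eps=\tau$ gives $\mathcal M_\eps\to\calR_0(q,z')$. If $\tau=0$ I pick $\tau_\eps$ proportional to the relevant optimal scale: for $\alpha=1$, $\tau_\eps=\eps\sqrt{(a+b)/(c+d)}$ when $(a+b)(c+d)>0$ (then $\mathcal M_\eps\equiv\calR_0(q,z')+2\sqrt{(a+b)(c+d)}$ for all $\eps$) and a suitably chosen $\tau_\eps\to0$ making both leftover terms vanish in the degenerate cases; for $\alpha>1$, $\tau_\eps=\eps^\alpha\sqrt{b/d}$ in the branch $a=0$ and $\tau_\eps=\eps\sqrt{a/c}$ in the branch $d=0$, checking each time that the leftover cross-term carries a positive power of $\eps$ (e.g.\ $\tfrac{\tau_\eps}{\eps}c=\eps^{\alpha-1}\sqrt{b/d}\,c\to0$ in the first branch) and is therefore negligible; the degenerate sub-cases and $\alpha\in(0,1)$ are handled by the same recipe and the $u\leftrightarrow z$ symmetry. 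In every case $\limsup_\eps\mathcal M_\eps\le\mathcal M_0$, which completes the $\Gamma$-convergence.

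For the refined $\liminf$ \eqref{ioffe-refined} the plan is to reduce to an Ioffe-type lower semicontinuity theorem for integral functionals that are convex in one block of variables converging \emph{weakly} in $L^1$ while the remaining arguments converge \emph{strongly} in $L^1$ (see e.g.\ \cite{IofTih,AGS08}), after absorbing $\eps$ into the strong block. Namely, with $e_\eps(s)\equiv\eps$, set $\widehat{\mathcal M}(\rho,q,\xi;\tau,v):=\Me{\rho}{q}{\tau}{v}{\xi}$ for $\rho>0$ and $:=\Mo q\tau v\xi$ for $\rho=0$. Two structural facts make the theorem applicable: (i) for each $(\rho,q,\xi)$ the function $(\tau,v)\mapsto\widehat{\mathcal M}(\rho,q,\xi;\tau,v)$ is convex and lower semicontinuous --- for $\rho>0$ because $(\tau,z')\mapsto\tfrac{\rho}{\tau}\vpot zq{z'}$ and $(\tau,u')\mapsto\tfrac{\rho^\alpha}{\tau}\vpot uq{u'}$ are perspective functions of positive quadratic forms (hence jointly convex and lsc), the dual terms are linear in $\tau$, and $\calR_0(q,\cdot)$ is convex; for $\rho=0$ because, at fixed $\xi$, each branch of $\Mo q\tau v\xi$ is the sum of $\calR_0(q,\cdot)$, a constant multiple of a $\vcofname z(q)$-type norm of $v$, and the indicator of $\{\tau=0\}$ (or of $\{v=0\}$); (ii) $\widehat{\mathcal M}$ is jointly lower semicontinuous, which for $\rho>0$ is clear from continuity of $\calR_0$, $\conjz zq\cdot$, $\moreau uq\cdot$ and lower semicontinuity of perspective functions, and across $\rho=0$ is exactly the pointwise $\Gamma$-$\liminf$ inequality just proved (the lsc of $\mathcal M_0$ being automatic for a $\Gamma$-limit). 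Applying the theorem to $(e_\eps,q_\eps,\xi_\eps)\to(0,q,\xi)$ strongly (along a subsequence realizing the $\liminf$ and with a.e.\ convergence) and $(\tau_\eps,q_\eps')\weakto(\tau,q')$ weakly in $L^1$, and using $\widehat{\mathcal M}\ge0$, yields \eqref{ioffe-refined}. I expect the main obstacle to be the $\liminf$-inequality in the incompatible-rates regime $\alpha\neq1$ --- making rigorous that $\mathcal M_\eps\to+\infty$ whenever the limit datum has $\vpot zq{z'}\,\moreau uq\eta>0$ (for $\alpha>1$), the content of the estimate $\mathcal M_\eps^2\ge\eps^{1-\alpha}a_\eps d_\eps$ --- with a secondary technical hurdle in verifying convexity of $\mathcal M_0$ in the time--velocity block across the three regimes of $\alpha$, which is needed to run the Ioffe-type argument.
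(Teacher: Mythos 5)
Your proposal is correct and follows essentially the same route as the paper: the same splitting off of $\calR_0$, the same minimize-over-$\tau$/AM--GM lower bound driving the case analysis in $\alpha$ (including the incompatibility estimate $\mathcal M_\eps^2\ge \eps^{1-\alpha}a_\eps d_\eps$, which is the paper's \eqref{crucial-observation-bis}), recovery sequences obtained by tuning only $\tau_\eps$, and the Ioffe theorem applied to a combined functional $\overline{\calM}(\eps;q,\tau,q',\xi)$ that is jointly lsc and convex in $(\tau,q')$ for \eqref{ioffe-refined}. The only cosmetic differences are that you use branch-specific recovery scalings where the paper uses a single $\tau_\eps^{**}$, and you spell out the perspective-function convexity that the paper delegates to \cite[Lemma 3.1]{MRS09}.
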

\begin{remark}
\label{rmk:switch}
\upshape
Let us briefly comment on  the expression \eqref{basic-one} of the $\Gamma$-limit $\mathcal{M}_0$. To do so, we rephrase
the constraints arising in the switching conditions for the reduced functional $\moredname$, cf.\ \eqref{Mored-basic},  \eqref{def-Mop1}, and
\eqref{def-Mom1}.  Indeed, it follows from \eqref{ass:dissip-pot-Vz} and \eqref{ass:dissip-pot-Vu}  (cf.\ \eqref{inv-vcof-1}) that
\[
\begin{array}{lllllll}
 &  \vpot{z}{q}{z'} =0  & \Leftrightarrow & z'=0, \qquad   &  \vpot{u}{q}{u'} =0  & \Leftrightarrow & u'=0,
 \\
 &
 \moreau uq{\eta} = 0 & \Leftrightarrow & \eta=0, \qquad   &   \conjz {z}{q}{\zeta}=0 & \Leftrightarrow & \zeta \in  \stab q=  \partial\calR_0 (q,0).
\end{array}
\]
Therefore, from \eqref{Mored-basic} we read that for $\tau>0$ the functional $\Mored{q}{\tau}{\cdot}{\cdot}$ is finite (and indeed equal to $0$) only  for
$\eta$ and $\zeta$ fulfilling
\[
\eta =0, \qquad
 \zeta \in \stab  q \,.
\]

For $\tau=0$, in the case $\alpha>1$, $\Mored{q}{0}{\cdot}{\cdot}$ is finite if and only if either $z'=0$ or $\eta =0$. As we will see when discussing the physical interpretation of our 
vanishing-viscosity result, this means that, at a jump (i.e.\ when $\tau=0$), either $z'=0$, i.e.\   $z$ is frozen, or $u$
fulfills the equilibrium condition $\eta =\rmD_u \ene tu=0$.

 Also in view of \eqref{specular}, the switching conditions for $\alpha\in (0,1)$ are specular
 to the ones for $\alpha>1$
 in a generalized sense.
 In fact, $\Mored{q}{0}{\cdot}{\cdot}$ is finite if and only if either
 $u$ is frozen, or $\zeta = \rmD_z \ene tz \ \in  \stab q$,  meaning that  $z$ fulfills the \emph{local stability} condition.

\end{remark}
\begin{proof}
Observe that
\[
\Me {\eps}{q}{\tau}{(u',z')}{(\eta,\zeta)}=
\calR_0 (q,z') +
\Mered q\tau{(u',z')}{(\eta,\zeta)}
\]
with $\Mered q\tau{(u',z')}{(\eta,\zeta)} : = \frac{\eps}{\tau} \vpot zq{z'} +
\frac{\eps^\alpha}{\tau} \vpot uq{u'} +
\frac\tau\eps \conjz {z}{q}{\zeta} + \frac\tau{\eps^\alpha} \moreau uq\eta$.
Since $\calR_0$ is continuous with respect to both variables $q$ and $z$ and does not depend on $\eps$, it is
clearly sufficient to prove that the functionals $\mathcal{M}_\eps^{\mathrm{red}}$
$\Gamma$-converge to $\mathcal{M}_0^{\mathrm{red}}$, namely
\begin{align}
&
\label{liminf}
\begin{aligned}
&\Gamma\text{-}\liminf \text{ estimate: }
\\
&
\quad (q_\eps, \tau_\eps,u_\eps',z_\eps',\eta_\eps,\zeta_\eps)  \to
(q,\tau,u',z',\eta,\zeta)  \ \ \text{for }\eps \to 0 \\
&\qquad \Longrightarrow \ \   \Mored q\tau{(u',z')}{(\eta,\zeta)}  \leq
\liminf_{\eps \downarrow 0} \Mered
{q_\eps}{\tau_\eps}{(u_\eps',z_\eps')}{(\eta_\eps,\zeta_\eps)}, &&
 \end{aligned}
\\
&
\label{limsup}
\begin{aligned}
&
\Gamma\text{-}\limsup \text{ estimate: }
\\
& \quad
\forall\,  (q,\tau,u',z',\eta,\zeta) \ \exists\,
(q_\eps, \tau_\eps,u_\eps',z_\eps',\eta_\eps,\zeta_\eps)_\eps \, : \
\\
&\qquad \qquad
\begin{cases}
(q_\eps,\tau_\eps,u_\eps',z_\eps',\eta_\eps,\zeta_\eps) \to (q,\tau,u',z',\eta,\zeta) \qquad \text{and}
\\
 \Mored q\tau{(u',z')}{(\eta,\zeta)}  \geq \limsup_{\eps \downarrow 0} \Mered {q_\eps}{\tau_\eps}{(u_\eps',z_\eps')}{(\eta_\eps,\zeta_\eps)}.
 \end{cases}
 \end{aligned}
\end{align}
Preliminarily, observe that minimizing with respect to $\tau$ we
obtain the lower bound
\begin{equation}
\label{crucial-observation}
\Mered q\tau{(u',z')}{(\eta,\zeta)}
\geq 2 \sqrt{\eps\vpot zq{z'} +
\eps^\alpha \vpot uq{u'} } \sqrt{
\frac1\eps \conjz {z}{q}{\zeta} + \frac1{\eps^\alpha} \moreau uq\eta}.
\end{equation}

In all the three cases $\alpha>1$, $\alpha=1$, and $\alpha\in (0,1)$,  the expression \eqref{Mored-basic}  of
$\moredname$
for $\tau>0$ can be easily checked. Indeed,
for  the $\Gamma$-$\liminf$ estimate, observe that it is trivial in the case   $\conjz {z}{q}{\zeta} =\moreau uq{\eta} =0$, as
$\mathcal{M}_\eps^{\mathrm{red}}$ takes positive values for all $\eps>0$. Suppose now that  $\conjz {z}{q}{\zeta} +\moreau uq{\eta} >0$,
e.g.\ that  $\moreau uq{\eta} >0$.
Now, $(q_\eps,\eta_\eps) \to (q,\eta)$ implies that $\moreau u{q_\eps}{\eta_\eps} \geq \bar c >0$ for sufficiently small  $\eps $,
and from  \eqref{crucial-observation} we deduce that
\[
 \liminf_{\eps \downarrow 0} \Mered {q_\eps}{\tau_\eps}{(u_\eps',z_\eps')}{(\eta_\eps,\zeta_\eps)}=\infty=  \Mored q\tau{(u',z')}{(\eta,\zeta)} \,.
\]
The $\Gamma$-$\limsup$ estimate follows by taking the recovery sequence $(q_\eps\tau_\eps,u_\eps',z_\eps',\eta_\eps,\zeta_\eps) = (q,\tau,u',z',\eta,\zeta)$. In fact,
$\conjz {z}{q}{\zeta} +\moreau uq{\eta} >0$, then the
$\limsup$-inequality in \eqref{limsup} is trivial. If  $\conjz {z}{q}{\zeta} =\moreau uq{\eta} =0$, \eqref{limsup} can be checked straightforwardly.

For $\alpha=1$, in the case $\tau=0$,
  \eqref{crucial-observation} clearly yields the $\Gamma$-$\liminf$ estimate, whereas the $\Gamma$-$\limsup$ one can be obtained by with the recovery sequence
$(q_\eps,\tau_\eps,u_\eps',z_\eps',\eta_\eps,\zeta_\eps)=(q,\tau_\eps^*,u',z',\eta,\zeta) $   with
\[
\tau_\eps^* = \eps  \frac{\sqrt{\vpot zq{z'} +
 \vpot uq{u'} } }{\sqrt{
 \conjz {z}{q}{\zeta} +  \moreau uq\eta}}.
\]

For $\alpha>1$, in the case $\tau=0$,  the $\Gamma$-$\liminf$ estimate follows  taking into account that \eqref{crucial-observation} yields
\begin{equation}
\label{crucial-observation-bis}
\Mered q\tau{(u',z')}{(\eta,\zeta)}
\geq \frac{2}{\sqrt{\eps^{\alpha-1}}} \sqrt{\vpot zq{z'}  \moreau uq\eta}.
\end{equation}
Hence, if  both $\vpot zq{z'} >0 $  and  $\moreau uq\eta>0$, then $\liminf_{\eps \down 0} \Mered q\tau{(u',z')}{(\eta,\zeta)}   =\infty$.
In the case when either  $\vpot zq{z'} =0 $  or  $\moreau uq\eta=0$, we deduce the  $\Gamma$-$\liminf$ estimate from \eqref{crucial-observation}.
For the  $\Gamma$-$\limsup$ estimate, we again take the recovery sequence $(t,q,\tau_\eps^{**},u',z',\eta,\zeta) $, where now
\[
\tau_\eps^{**} = \eps  \frac{\sqrt{\vpot zq{z'} +
\eps^{\alpha-1} \vpot uq{u'} } }{\sqrt{
 \conjz {z}{q}{\zeta} +  \frac1{\eps^{\alpha-1}}\moreau uq\eta}}.
\]

The discussion of the case $\alpha
\in (0,1)$ is completely analogous, also in view of \eqref{specular}.

Finally, in order to prove \eqref{ioffe-refined}, we apply the Ioffe Theorem \cite{Ioff77LSIF}. For this,  we
introduce a functional $\overline{\calM} : [0,\infty) \times \calQ\times [0,\infty) \times \calQ \times \R^{n+m} \to [0,\infty] $ subsuming
 the functionals $\calM_\eps$ and
$\calM_0$, viz.\
\[
\overline{\calM} (\eps; q,\tau,q',\xi) := \left\{
\begin{array}{ll}
\Me \eps q \tau {q'} \xi & \text{if } \eps>0,
\\
\Mo  q \tau {q'} \xi & \text{if } \eps=0.
\end{array}
\right.
\]
Arguing in the very same way as in the proof of \cite[Lemma 3.1]{MRS09}, it  can be inferred that the functional $\overline{\calM}$ is lower semicontinuous
on $[0,\infty) \times \calQ\times [0,\infty) \times \calQ \times \R^{n+m} $,
 and that $(\tau,q') \mapsto \overline{\calM} (\eps; q,\tau,q',\xi) $ is convex for all $(\eps,q,\xi) \in  [0,\infty) \times \calQ \times \R^{n+m} $. Hence, the Ioffe Theorem ensures that
\[
\liminf_{\eps \down 0}
\int_0^S \overline{\calM}(\eps;{q_\eps(s)},{\tau_\eps(s)},{q_\eps'(s)},{\xi_\eps(s)}) \dd s
\geq \int_0^S \overline{\calM}(0;{q(s)},{\tau(s)},{q'(s)},{\xi(s)}) \dd s,
\]
whence \eqref{ioffe-refined}.
\end{proof}
Observe that the functional $\calM_0$ \eqref{basic-one}
fulfills for all $(q,\tau) \in \calQ\times [0,\infty)$
\begin{equation}
\label{mo-big}
\Mo{q}{\tau}{q'}{\xi} \geq \langle q', \xi \rangle = \langle u',\eta \rangle + \langle z', \zeta \rangle \qquad \text{for all } q' =(u',z') \in \calQ \text{ and all }
\xi = (\eta,\zeta )\in \R^{n +m}.
\end{equation}
Indeed,  for $\tau>0$, the inequality is trivial if either $\moreau uq\eta >0$ or $\conjz zq{\zeta}>0$. When both of them equal $0$, then
$\eta =0$ and
$
\langle q', \xi \rangle = \langle \zeta,z'\rangle  \leq  \calR_0(q,z')=  \Mo{q}{\tau}{q'}{\xi}$.
For $\tau =0$, e.g.\ in the case $\alpha >1$ we have, if $z'=0$,
\[
\langle q', \xi \rangle = \langle \eta,u'\rangle  \leq \sqrt{\langle \vcof u{q}{u'},u'\rangle  } \sqrt{\langle \vcofinv u{q}{\eta},\eta'\rangle  }
= \Mored{q}{\tau}{q'}{\xi} +0= \Mo{q}{\tau}{q'}{\xi}
\]
while, if $\eta=0$,
\[
\begin{aligned}
\langle q', \xi \rangle = \langle \zeta,z'\rangle &  = \langle \zeta - \omega, z'\rangle + \langle \omega, z'\rangle
\\
 & \leq \sqrt{\langle \vcof z{q}{z'},z'\rangle  } \sqrt{\langle \vcofinv z{q}{(\zeta{-}\omega)},(\zeta{-}\omega)\rangle  } + \calR_0(z')
= \Mo{q}{\tau}{q'}{\xi}
\end{aligned}
\]
where we have  chosen  $w\in \stab q$ such that $  \conjz zq{\zeta} = \moreau{z}{q}{\zeta{ -}\omega}  = \frac12 \langle \vcofinv{z}{q}{(\zeta{-}\omega)},(\zeta {-}\omega)\rangle$, and from the fact that $\langle \omega, z'\rangle  \leq  \calR_0(z')
$.

For the ensuing discussions, the  set where   \eqref{mo-big}  holds as an equality shall play a crucial role.
We postpone its precise definition right before the statement of Proposition \ref{prop:charact}, cf.\ \eqref{contact-set} ahead.
\paragraph{\bf The vanishing-viscosity result.}
Theorem \ref{th:main} below
states that, up to a subsequence the
parameterized solutions $(\sft_\eps,\sfq_\eps)_\eps$ of the (Cauchy problems for the) viscous system \eqref{gen-grad-syst}, converge to a
parameterized curve $(\sft,\sfq)$, complying with the analog of the energy balance \eqref{enid-eps-param},  with $\mathcal{M}_0$ in place of $\mathcal{M}_\eps$.

We postpone after the proof of Theorem \ref{th:main} a thorough analysis of the notion of solution to the rate-independent system
\eqref{rip-limit} thus obtained. Let us instead mention in advance that the line of the argument for
proving the limiting  parameterized energy balance \eqref{enid-param-lim} is by now quite standard, cf.\ the proofs of \cite[Thm.\,3.3]{MRS09},
\cite[Thm.\,5.5]{MRS10}. In fact, the \emph{upper energy estimate} (i.e.\ the inequality $\leq$ for \eqref{enid-param-lim}) shall follow from lower semicontinuity arguments, based on the application of the Ioffe Theorem \cite{Ioff77LSIF}. The  \emph{lower energy estimate} $\geq	$ will instead ensue from the chain rule \eqref{chain-rule}.
We also point out that, for the compactness argument it is actually not necessary to start from parameterized curves for which
  estimate \eqref{normalization} holds, uniformly w.r.t.\ time. In fact, the uniform integrability of the sequence $(\sft_\eps', \sfq_\eps')_\eps$  is sufficient, cf.\ \eqref{uniform-integrab} below.
 \begin{theorem}
\label{th:main}
Assume \eqref{ass:dissip-pot-R}, \eqref{ass:dissip-pot-Vz}, \eqref{ass:dissip-pot-Vu}, and \eqref{ass:E}.
Let $(q_\eps)_\eps\subset H^1 (0,T;\calQ)$ be a sequence of solutions to the Cauchy problem for \eqref{gen-grad-syst}.
 Choose nondecreasing surjective parameterizations $\sft_\eps : [0,S_\eps] \to [0,T]$  and set
 $
 \sfq_\eps(s)= (\mathsf{u}_\eps(s),\mathsf{z}_\eps(s) ): = q_\eps(\sft_\eps(s))
 $
 for $s\in [0,S_\eps]$.  Suppose that $S_\eps \to S$ as $\eps \down 0$ up to a subsequence, and that
  there exist $q_0 \in \calQ$ and $m \in L^1 (0,S)$ such that
  $\sfq_\eps (0) \to q_0$, and
  \begin{equation}
  \label{uniform-integrab}
  m_\eps:= \sft_\eps'+|\sfq_\eps'| \weakto m \qquad
 \text{in } L^1(0,S) \text{ as } \eps \down 0.
  \end{equation}

Then,
there exist a  (not-relabeled) subsequence
and a parameterized curve $(\sft,\sfq) \in \AC ([0,S]; [0,T]\times \calQ)$
 such that  as $\eps\down 0$
 \begin{equation}
 \label{ascoli}
 (\sft_\eps,\sfq_\eps) \to (\sft,\sfq) \text{ in } \rmC^0 ([0,S];[0,T]\times \calQ),
 \end{equation}
 $\sft'+|\sfq'| \leq m$ a.e.\ in $(0,S)$,
 and $(\sft,\sfq)$ fulfills the (parameterized) energy identity \begin{equation}
\label{enid-param-lim}
\begin{aligned}
&
\ene {\sft(s_2)}{\sfq(s_2)} +\int_{s_1}^{s_2}
\Mo{\sfq(r)}{\sft'(r)}{\sfq'(r)}{-\rmD_q\ene{\sft(r)}{\sfq(r)}} \
\dd r
\\
& = \ene {\sft(s_1)}{\sfq(s_1)}
+ \int_{s_1}^{s_2} \partial_t \ene {\sft(r)}{\sfq(r)} \sft'(r) \dd  r \qquad \text{for all } 0 \leq s_1\leq s_2 \leq S.
\end{aligned}
\end{equation}
\end{theorem}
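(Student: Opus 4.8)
The plan is to follow the standard three-step scheme for vanishing-viscosity limits: compactness, the \emph{upper} energy estimate by lower semicontinuity, and the \emph{lower} energy estimate by the chain rule, relying on the a priori bounds of Section \ref{s:2} and, crucially, on the lower semicontinuity property \eqref{ioffe-refined} of Proposition \ref{prop-Gamma-conv}.

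\textbf{Step 1 (Compactness).} From $m_\eps=\sft_\eps'+|\sfq_\eps'|\weakto m$ in $L^1(0,S)$ and the Dunford--Pettis theorem, $(m_\eps)_\eps$ is equi-integrable; since $|\sft_\eps'|,|\sfq_\eps'|\le m_\eps$, so are $(\sft_\eps')_\eps$ and $(\sfq_\eps')_\eps$. Together with $\sft_\eps(0)=0$ and $\sfq_\eps(0)\to q_0$ this makes $(\sft_\eps,\sfq_\eps)_\eps$ equibounded and equi-absolutely continuous (after the harmless reconciliation of the intervals $[0,S_\eps]$ with $[0,S]$, using $S_\eps\to S$ and constant extension). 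By Arzel\`a--Ascoli I extract a not-relabeled subsequence with $(\sft_\eps,\sfq_\eps)\to(\sft,\sfq)$ in $\rmC^0$, the limit lying in $\AC([0,S];[0,T]\times\calQ)$; a further extraction gives $\sft_\eps'\weakto\sft'$, $\sfq_\eps'\weakto\sfq'$ in $L^1(0,S)$, the weak limits being identified with the derivatives of the uniform limit by testing against smooth functions. Finally, weak $L^1$-lower semicontinuity of $v\mapsto\int\varphi\,|v|$ for $\varphi\ge0$, combined with $\sft_\eps'\weakto\sft'$, yields $\int\varphi\,(\sft'+|\sfq'|)\le\lim_\eps\int\varphi\,m_\eps=\int\varphi\,m$, hence $\sft'+|\sfq'|\le m$ a.e.; in particular $\sft$ is nondecreasing, with $\sft(0)=0$. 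Note that only the uniform integrability \eqref{uniform-integrab}, not \eqref{normalization}, is used here.

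\textbf{Step 2 (Upper energy estimate).} I pass to the $\liminf$ as $\eps\down0$ in the parameterized balance \eqref{enid-eps-param} written between arbitrary $0\le s_1\le s_2\le S$. The boundary energy terms converge to $\ene{\sft(s_i)}{\sfq(s_i)}$ by the uniform convergence \eqref{ascoli} and the continuity of $\calE$. In the power term, $\partial_t\ene{\sft_\eps(\cdot)}{\sfq_\eps(\cdot)}\to\partial_t\ene{\sft(\cdot)}{\sfq(\cdot)}$ uniformly (continuity of $\partial_t\calE$ on the compact set cut out by the bound $\sup_\eps\|\sfq_\eps\|_\infty\le C$ of Lemma \ref{lemma:2.1}), and this uniformly convergent, uniformly bounded factor multiplied by $\sft_\eps'\weakto\sft'$ passes to the limit. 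For the dissipation integral I invoke \eqref{ioffe-refined}, whose hypotheses hold since $(\sft_\eps',\sfq_\eps')\weakto(\sft',\sfq')$ in $L^1$, while $\sfq_\eps\to\sfq$ and $\xi_\eps:=-\rmD_q\ene{\sft_\eps(\cdot)}{\sfq_\eps(\cdot)}\to-\rmD_q\ene{\sft(\cdot)}{\sfq(\cdot)}=:\xi$ uniformly (continuity of $\rmD_q\calE$). Taking the $\liminf$ of \eqref{enid-eps-param} --- the right-hand side converging, the dissipation integral on the left bounded below via \eqref{ioffe-refined} --- yields the inequality ``$\le$'' in \eqref{enid-param-lim} for all $0\le s_1\le s_2\le S$.

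\textbf{Step 3 (Lower energy estimate and conclusion).} Since $\calE\in\rmC^1$ and $(\sft,\sfq)\in\AC$, the chain rule \eqref{chain-rule} holds along $(\sft,\sfq)$; integrating it on $[s_1,s_2]$ gives
\[
\ene{\sft(s_2)}{\sfq(s_2)}+\int_{s_1}^{s_2}\langle-\rmD_q\ene{\sft(r)}{\sfq(r)},\sfq'(r)\rangle\dd r=\ene{\sft(s_1)}{\sfq(s_1)}+\int_{s_1}^{s_2}\partial_t\ene{\sft(r)}{\sfq(r)}\,\sft'(r)\dd r.
\]
The pointwise lower bound \eqref{mo-big}, $\Mo{\sfq}{\sft'}{\sfq'}{-\rmD_q\calE}\ge\langle-\rmD_q\calE,\sfq'\rangle$, then upgrades the left-hand side and yields the reverse inequality ``$\ge$'' in \eqref{enid-param-lim}. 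Combined with Step 2 this gives the parameterized energy identity for all $0\le s_1\le s_2\le S$, completing the proof. As for the main obstacle: with Proposition \ref{prop-Gamma-conv} already in hand, the genuinely delicate point --- the joint lower semicontinuity of the dissipation functional, which fuses the $\Gamma$-$\liminf$ for $(\mathcal M_\eps)_\eps$ with Ioffe's theorem --- has been dispatched beforehand, so the only care needed here is the bookkeeping of the mode of convergence of each factor (uniform convergence of $\xi_\eps$ and of $\partial_t\calE$ along the curves, weak $L^1$ convergence of the velocities) ensuring that \eqref{ioffe-refined} applies and the power term passes to the limit, all of which follows from Lemma \ref{lemma:2.1} and the continuity hypotheses behind \eqref{ass:E}.
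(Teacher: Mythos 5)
Your proposal is correct and follows essentially the same route as the paper: compactness from the uniform integrability \eqref{uniform-integrab} via Arzel\`a--Ascoli and Pettis, the upper energy estimate by passing to the limit in \eqref{enid-eps-param} using the uniform convergence of $\calE$, $\rmD_q\calE$, $\partial_t\calE$ along the curves together with \eqref{ioffe-refined}, and the lower estimate from the chain rule \eqref{chain-rule} combined with \eqref{mo-big}. The only cosmetic difference is how the intervals $[0,S_\eps]$ are reconciled with $[0,S]$ (the paper simply reparameterizes to a fixed interval), which is immaterial.
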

\begin{proof}
Up to a reparameterization, we  may suppose that the
curves $(\sft_\eps,\sfq_\eps)$ are defined on the fixed time interval $[0,S]$. We split the proof is three steps.

\noindent
\underline{Step $1$: compactness.} Observe that
for every $0 \leq s_1 \leq s_2 \leq S$
\begin{equation}
\label{analogue}
|\sfq_\eps(s_1) - \sfq_\eps (s_2)| \leq \int_{s_1}^{s_2} |\sfq_\eps'(s)| \dd s \leq \int_{s_1}^{s_2}m_\eps (s) \dd s \,.
\end{equation}
Since $(\sfq_\eps (0))_\eps$ is bounded, we deduce from \eqref{analogue} that  $(\sfq_\eps)_\eps \subset \rmC^0 ([0,S];\calQ)$ is bounded as well.
What is more, as  the family $(m_\eps)_\eps$
 is uniformly integrable \eqref{uniform-integrab}, $(\sfq_\eps)_\eps$ complies with the equicontinuity condition of the Ascoli-Arzel\`a Theorem and so does
 $(\sft_\eps)_\eps$, by the analog of estimate \eqref{analogue}. Hence, \eqref{ascoli} follows.
 Taking into account that $\calE \in \rmC^1 ([0,T]\times \calQ)$, we immediately conclude  from \eqref{ascoli} that
 \begin{equation}
 \label{convergence-of-energies}
 \ene{\sft_\eps}{\sfq_\eps} \to \ene{\sft}{\sfq}, \qquad
 \rmD_q \ene{\sft_\eps}{\sfq_\eps} \to \rmD_q\ene{\sft}{\sfq},
 \qquad
 \partial_t  \ene{\sft_\eps}{\sfq_\eps} \to \partial_t\ene{\sft}{\sfq} \quad \text{uniformly on } [0,S].
 \end{equation}
 Furthermore,  \eqref{uniform-integrab} also yields that the sequences $(\sft_\eps')_\eps$ and $(\sfq_\eps')_\eps$ are uniformly integrable. Thus,
 by the Pettis Theorem,
 up to a further extraction we find
 \begin{equation}
 \label{l1-conv}
 \sft_\eps' \weakto \sft' \qquad \text{in } L^1 (0,S), \qquad \sfq_\eps' \weakto \sfq' \qquad  \text{in } L^1 (0,S;\calQ),
 \end{equation}
 whence $\sft'+|\sfq'| \leq m$ a.e.\ in $(0,S)$.

 \noindent
 \underline{Step $2$: upper energy estimate. }
 We now take the limit as $\eps \down 0$ of the (parameterized) energy-dissipation balance \eqref{enid-eps-param} for every $0 \leq s_1 \leq s_2 \leq S$:
 \begin{equation}
 \label{UEE}
 \begin{aligned}
 &
 \ene {\sft(s_2)}{\sfq(s_2)} +\int_{s_1}^{s_2}
\Mo{\sfq(r)}{\sft'(r)}{\sfq'(r)}{-\rmD_q\ene{\sft(r)}{\sfq(r)}}
\dd r
\\
&
\stackrel{(1)}{\leq} \lim_{\eps \down 0}\ene {\sft_\eps(s_2)}{\sfq_\eps(s_2)} +\liminf_{\eps \down 0}\int_{s_1}^{s_2}
\Me{\eps}{\sfq_\eps(r)}{\sft_\eps'(r)}{\sfq'_\eps(r)}{-\rmD_q\ene{\sft_\eps(r)}{\sfq_\eps(r)}} \
\dd r
\\
& = \lim_{\eps \down 0}\ene {\sft_\eps(s_1)}{\sfq_\eps(s_1)}
+  \lim_{\eps \down 0} \int_{s_1}^{s_2} \partial_t \ene{\sft_\eps(r)}{\sfq_\eps(r)} \sft_\eps'(r) \dd  r
\\
&
\stackrel{(2)}{ =} \ene {\sft(s_1)}{\sfq(s_1)}
+  \int_{s_1}^{s_2} \partial_t \ene{\sft(r)}{\sfq(r)} \sft'(r) \dd  r\,,
\end{aligned}
\end{equation}
where $(1) $ follows from the energy convergence in
\eqref{convergence-of-energies} and  the previously proved
 \eqref{ioffe-refined}, and  (2) from \eqref{convergence-of-energies}, again,  combined with the first  of \eqref{l1-conv}.
 This concludes the upper energy estimate.

\noindent
\underline{Step $3$: lower energy estimate.} We have for all $0 \leq s_1\leq s_2 \leq S$ that
  \begin{equation}
 \label{LEE}
 \begin{aligned}
  & \ene {\sft(s_1)}{\sfq(s_1)}
+  \int_{s_1}^{s_2} \partial_t \ene{\sft(r)}{\sfq(r)} \sft'(r) \dd  r
\\
& \stackrel{(1)}{ = }  \ene {\sft(s_2)}{\sfq(s_2)} +   \int_{s_1}^{s_2}  \langle -\rmD_q \ene{\sft(r)}{\sfq(r)} , \sfq'(r) \rangle \dd r
\\
& \stackrel{(2)}{ \leq  }  \ene {\sft(s_2)}{\sfq(s_2)} +\int_{s_1}^{s_2}
\Mo{\sfq(r)}{\sft'(r)}{\sfq'(r)}{-\rmD_q\ene{\sft(r)}{\sfq(r)}}
\dd r\,,
 \end{aligned}
 \end{equation}
 where (1) follows from the chain rule, and (2) is due to inequality
 \eqref{mo-big}.  In this way, we conclude \eqref{enid-param-lim}.

Finally, combining \eqref{UEE} and \eqref{LEE} it is easy to deduce
that
 \[
 \lim_{\eps \down 0} \int_{s_1}^{s_2}
\Me{\eps}{\sfq_\eps(r)}{\sft_\eps'(r)}{\sfq_\eps'(r)}{- \rmD_q
  \ene{\sft_\eps(r)}{\sfq_\eps(r)}}   \dd r = \int_{s_1}^{s_2}
\Mo{\sfq(r)}{\sft'(r)}{\sfq'(r)}{-\rmD_q\ene{\sft(r)}{\sfq(r)}} \dd r
\]
for all $0\leq s_1 \leq s_2 \leq S$, whence $ \int_{s_1}^{s_2} \calR_0
(\sfq_\eps(r), \mathsf{z}_\eps'(r)) \dd r \to \int_{s_1}^{s_2} \calR_0
(\sfq(r), \mathsf{z}'(r)) \dd r $.
 \end{proof}

\paragraph{\bf Balanced Viscosity parameterized solutions.}
Let us now gain further insight into the notion of solution to system
\eqref{intro:rip-limit} arising from the vanishing-viscosity limit.
First of all, we fix its definition.

\begin{definition}\label{def:bv-param}
  Let $(\calR_0,\vpotname z,\vpotname u, \cE)$ comply with
  \eqref{ass:dissip-pot-R}, \eqref{ass:dissip-pot-Vz},
  \eqref{ass:dissip-pot-Vu}, and \eqref{ass:E}.  A curve $(\sft,\sfq)
  \in \mathrm{AC} ([0,S];[0,T]\times \calQ)$ is called a
  \emph{parameterized Balanced Viscosity} ($\pbv$, for short) solution
  to the rate-independent system $(\calQ, \calE, \calR_0 + \eps
  \vpotname z + \eps^\alpha \vpotname u) $ if $\sft:[0,S] \to [0,T]$
  is nondecreasing, and the pair $(\sft,\sfq)$ complies with the
  energy-dissipation balance \eqref{enid-param-lim} for all $0\leq
  s_1\leq s_2 \leq S$.

  Furthermore, $(\sft,\sfq)$ is called
  \begin{compactitem}
  \item  \emph{non-degenerate}, if
  \begin{equation}
  \sft'(s) + |\sfq'(s)| >0 \qquad \foraa\, s \in (0,S);
  \end{equation}
  \item \emph{surjective},  if $\sft: [0,S] \to [0,T]$ is surjective.
  \end{compactitem}
\end{definition}

\begin{remark}
\label{rmk:nondeg}
\upshape Observe that, even in the case when the function $m$ in
\eqref{uniform-integrab} is a.e.\ strictly positive, Theorem
\ref{th:main} does not guarantee the existence of non-degenerate
$\pbv$ solutions. However, any degenerate $\pbv $ solution
$(\sft,\sfq)$ can be reparameterized to a non-degenerate one
$(\tilde{\sft}, \tilde{\sfq}) : [0,\tilde{S}] \to [0,T]\times \calQ$,
even fulfilling the \emph{normalization condition}
 \begin{equation}
 \label{fake-norma}
 \tilde{\sft}'(\sigma) + \tilde{\sfq}'(\sigma)=1 \qquad \foraa\,
 \sigma \in (0,\tilde S)\,.
\end{equation}
Indeed, following \cite[Rmk.\ 2]{MRS09}, starting from a (possibly
degenerate) solution $(\sft,\sfq)$, we set
 \[
 \sigma(s):= \int_0^s \sft'(r) + |\sfq'(r)| \dd r \qquad \text{and }
 \tilde{S}:= \sigma (S),
 \]
 and define $(\tilde{\sft}(\sigma), \tilde{\sfq}(\sigma)):=
 (\sft(s),\sfq(s))$ if $\sigma = \sigma(s)$. Then, the very same
 calculations as in \cite[Rmk.\ 2]{MRS09} lead to \eqref{fake-norma}.
 \end{remark}

 We conclude this section with a characterization of $\pbv$ solutions
 in the same spirit as \cite[Prop.\ 2]{MRS09} and \cite[Prop.\
 5.3]{MRS10}, \cite[Cor.\ 4.5]{mielke-rossi-savare2013}.  We show that
 the energy identity \eqref{enid-param-lim} defining the concept of
 $\pbv$ solutions is equivalent to the corresponding energy inequality
 on the interval $[0,S]$, and to the energy inequality in a
 differential form.  Finally, \eqref{char-contset} below provides a
 further reformulation of this solution concept which involves the
 \emph{contact set} (cf.\ \cite{MRS10, mielke-rossi-savare2013})
\begin{equation}
\label{contact-set}
\contset{q}:= \{ (\tau,q',\xi)\in [0,\infty) \times \calQ \times
\R^{n+m} \, : \Mo{q}{\tau}{q'}{\xi} = \langle q',\xi\rangle \}
\end{equation}
Observe that for all $q \in \calQ$ the set $\contset{q}$ is closed, as
the functional $\Mo{q}{\cdot}{\cdot}{\cdot}$ is lower semicontinuous.
In Proposition we will provide \ref{l:cont-set} the explicit
representation of $\contset{q}$. This and \eqref{char-contset} we will
be at the core of the reformulation of $\pbv$ solutions in terms of
subdifferential inclusions, which we will discuss in Sec.\ \ref{s:4}.

\begin{proposition}
\label{prop:charact}
Let $(\calR_0,\vpotname z,\vpotname u, \cE)$ comply with
\eqref{ass:dissip-pot-R}, \eqref{ass:dissip-pot-Vz},
\eqref{ass:dissip-pot-Vu}, and \eqref{ass:E}.  A curve $(\sft,\sfq)
\in \mathrm{AC} ([0,S];[0,T]\times \calQ)$, with $\sft$ nondecreasing,
is a $\pbv$ solution to the rate-independent system $(\calQ,\calE,
\calR_0 + \eps \vpotname z+ \eps^\alpha \vpotname u)$ if and only if
one of the following equivalent conditions is satisfied:
 \begin{enumerate}
\item  \eqref{enid-param-lim} holds as an inequality on $(0,S)$, i.e.
\[
\begin{aligned}
&
\ene {\sft(S)}{\sfq(S)} +\int_{0}^{S}
\Mo{\sfq(r)}{\sft'(r)}{\sfq'(r)}{-\rmD_q\ene{\sft(r)}{\sfq(r)}} \
\dd r
\\
& \leq  \ene {\sft(0)}{\sfq(0)}
+ \int_{0}^{S} \partial_t \ene {\sft(r)}{\sfq(r)} \sft'(r) \dd  r;
\end{aligned}
\]
\item the above energy inequality holds in the differential form
  $\frac{\dd}{\dd s} \ene{\sft}{\sfq} +
  \Mo{\sfq}{\sft'}{\sfq'}{-\rmD_q\ene{\sft}{\sfq}} \leq \partial_t
  \ene {\sft}{\sfq} \sft' $ a.e.\ in $(0,S)$;
\item the triple $(\sft',\sfq',-\rmD_q \ene{\sft}{\sfq})$ belongs to
  the contact set, i.e.
\begin{equation}
\label{char-contset}
(\sft'(s),\sfq'(s),-\rmD_q \ene{\sft(s)}{\sfq(s)}) \in \contset{\sfq(s)} \qquad \foraa\, s \in (0,S).
\end{equation}
\end{enumerate}
\end{proposition}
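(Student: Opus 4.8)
The statement asserts the equivalence of four conditions for a nondecreasing curve $(\sft,\sfq)$: being a $\pbv$ solution (i.e. satisfying the energy-dissipation balance \eqref{enid-param-lim}); the one-interval energy inequality (1); its differential form (2); and the pointwise contact-set condition (3). The plan is to prove the chain of implications $\text{($\pbv$)}\Rightarrow(1)\Rightarrow(2)\Leftrightarrow(3)\Rightarrow\text{($\pbv$)}$, with the cycle closing because the lower energy estimate always holds automatically. The implication $\text{($\pbv$)}\Rightarrow(1)$ is trivial, since (1) is just \eqref{enid-param-lim} taken with $s_1=0$, $s_2=S$ and the equality weakened to ``$\leq$''.

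\textbf{From $(1)$ to $(2)$ and back.} The passage from the integrated inequality (1) to the differential one (2) rests on the chain rule \eqref{chain-rule} applied along the absolutely continuous curve $(\sft,\sfq)$. Indeed, writing out $\frac{\rmd}{\rmd s}\ene{\sft(s)}{\sfq(s)} = \partial_t\ene{\sft(s)}{\sfq(s)}\sft'(s) + \langle \rmD_q\ene{\sft(s)}{\sfq(s)},\sfq'(s)\rangle$, the lower bound \eqref{mo-big}, namely $\Mo{\sfq}{\sft'}{\sfq'}{-\rmD_q\ene{\sft}{\sfq}}\geq \langle -\rmD_q\ene{\sft}{\sfq},\sfq'\rangle$, shows that the integrand in
\[
\int_0^S \Big(\frac{\rmd}{\rmd s}\ene{\sft}{\sfq} + \Mo{\sfq}{\sft'}{\sfq'}{-\rmD_q\ene{\sft}{\sfq}} - \partial_t\ene{\sft}{\sfq}\sft'\Big)\,\rmd s
\]
is pointwise $\geq 0$, while (1) says precisely that this integral is $\leq 0$. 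Hence the nonnegative integrand must vanish a.e., which is (2) (actually with equality), and conversely integrating (2) over $(0,S)$ gives (1). The same argument, applied on an arbitrary subinterval $[s_1,s_2]$, upgrades (2) to the full balance \eqref{enid-param-lim}, giving $(2)\Rightarrow\text{($\pbv$)}$ and closing the cycle.

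\textbf{From $(2)$ to $(3)$.} By definition of the contact set \eqref{contact-set}, the condition \eqref{char-contset} says that $\Mo{\sfq(s)}{\sft'(s)}{\sfq'(s)}{-\rmD_q\ene{\sft(s)}{\sfq(s)}} = \langle \sfq'(s),-\rmD_q\ene{\sft(s)}{\sfq(s)}\rangle$ for a.e.\ $s$. Using the chain rule once more to rewrite $\langle -\rmD_q\ene{\sft}{\sfq},\sfq'\rangle = \partial_t\ene{\sft}{\sfq}\sft' - \frac{\rmd}{\rmd s}\ene{\sft}{\sfq}$, we see that \eqref{char-contset} is literally the statement that the inequality in (2) holds as an \emph{equality} a.e.\ in $(0,S)$. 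Since \eqref{mo-big} forces the general inequality $\Mo{\cdot}{\cdot}{\cdot}{\cdot}\geq \langle\cdot,\cdot\rangle$ to hold pointwise everywhere, the ``$\leq$'' in (2) is equivalent to the ``$=$'', so $(2)\Leftrightarrow(3)$.

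\textbf{Main obstacle.} There is no deep analytic obstacle: the only tools are the chain rule \eqref{chain-rule} (available because $\calE\in\rmC^1$ and $(\sft,\sfq)\in\mathrm{AC}$) and the elementary lower bound \eqref{mo-big}, both already established. The one point requiring a little care is the measurability of the integrand in the displays above, so that ``a.e.\ nonnegative with nonpositive integral $\Rightarrow$ a.e.\ zero'' is legitimate; this follows since $s\mapsto \Mo{\sfq(s)}{\sft'(s)}{\sfq'(s)}{-\rmD_q\ene{\sft(s)}{\sfq(s)}}$ is measurable (composition of the lower semicontinuous $\mathcal{M}_0$ with measurable arguments) and the remaining terms are derivatives of absolutely continuous functions. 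One should also note for (2) that a priori the differential inequality is stated with ``$\leq$'' but the argument in fact yields equality, which is consistent with \eqref{enid-param-lim}.
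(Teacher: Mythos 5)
Your argument is correct and is exactly the route the paper intends: the proof of Proposition \ref{prop:charact} is omitted there precisely because it reduces to the chain rule \eqref{chain-rule} combined with the lower bound \eqref{mo-big} (as in Step 3 of the proof of Theorem \ref{th:main}), which is what you carry out. Your chain of implications, including the observation that the nonnegative integrand with nonpositive integral must vanish a.e.\ so that the inequality in (2) self-improves to equality and yields \eqref{enid-param-lim} on every subinterval, matches the standard argument cited from \cite{MRS09,MRS10,mielke-rossi-savare2013}.
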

The proof of Proposition \ref{prop:charact} is omitted: it follows by
exploiting the chain rule \eqref{chain-rule}, with arguments akin to
those in the proof of Theorem \ref{th:main}, see also
\cite[Prop.\,2]{MRS09} and \cite[Prop.\,5.3]{MRS10},
\cite[Cor.\,4.5]{mielke-rossi-savare2013}.

\section{Physical interpretation}
\label{s:4}

The following result provides a thorough description of the (closed)
contact set $\contset{q}$, cf.\ \eqref{contact-set}.  As we will see,
the representation of $\contset q$ substantially different in the
three cases $\alpha>1$, $\alpha=1$, and $\alpha\in (0,1)$. That is
why, in Proposition \ref{l:cont-set} below we will use the notation
$\Sigma_{\alpha>1}(q)$, $\Sigma_{\alpha=1}(q)$, and $\Sigma_{\alpha
  \in (0,1)}(q)$.  We will prove that these sets are given by the
union of subsets describing the various evolution regimes for the
variables $u$ and $z$. The notation for these subsets will be of the
form
\[
\rgm{A}{r}{B}{s} \qquad \text{with } \mathrm{A},  \mathrm{B} \in \{ \mathrm{E}, \mathrm{R},  \mathrm{V}, \mathrm{B} \} \text{ and } \mathsf{r}, \mathsf{s} \in \{   \mathsf{u},
 \mathsf{z} \}.
\]
The letters $\mathrm{E}, \mathrm{R},  \mathrm{V}, \mathrm{B}$ stand for \emph{Equilibrated}, \emph{Rate-independent},
\emph{Viscous}, and \emph{Blocked}, respectively. For instance,
$\rgm EuRz$ is the set of $(\tau,q',\xi)$ corresponding to equilibrium for $u$ and rate-independent evolution for $z$, cf.\ \eqref{ssu} below; we postpone more comments after the statement of
Proposition \ref{l:cont-set}.
Observe that all of these sets depend on the state variable $q$, as does $\contset q$. However, for simplicity
we will not highlight this in their notation.
In their description 
we  shall always refer to the representation
$q'=(u',z')$ for the velocity variable, and
$\xi =(\eta,\zeta)$ for the force variable.
\begin{proposition}
\label{l:cont-set}
Assume \eqref{ass:dissip-pot-R}, \eqref{ass:dissip-pot-Vz}, \eqref{ass:dissip-pot-Vu}, and \eqref{ass:E}.
Then, for
\begin{description}
\item[\underline{$\alpha >1$}] the contact set is given by
 \begin{equation}
  \label{cont-set-ap}
  \Sigma_{\alpha>1}(q)  =  \rgm EuRz \cup \rgm VuBz \cup \rgm EuVz
 \end{equation}
  where
  \begin{align}
  &   \label{ssu}
  \rgm EuRz: = \{ (\tau,q',\xi)\, :
  \ \tau>0,\ \ q'=(u',z'), \ \xi = (0,\zeta) \text{ and
  }   \partial\calR_0 (q,z') \ni \zeta \},   \\
&\label{fastu}
 \rgm VuBz : = \{ (\tau,q',\xi)\, : \ (\tau,q',\xi)
  =(0,(u',0),(\eta,\zeta)) \text{ and }  \exists\, \thn u \in [0,1]\, :
   \ \thn u \vcof u{q}u'= (1-\thn u) \eta \}, \\
&\label{equilu}
 \begin{aligned}
 \rgm EuVz:= \{ (\tau,q',\xi)\, :  \  & \tau =0, \ q'=(u',z'), \  \xi
 = (0,\zeta) \text{ and } \\
 & \exists\,   \thn z \in [0,1]  \, : \
(1-\thn z)  \partial\calR_0 (q,z') + \thn  z
    \vcof z{q}z' \ni  (1-\thn z) \zeta \}.
  \end{aligned}
 \end{align}
\item[\underline{$\alpha =1$}] the contact set is given by
  \begin{equation}
   \label{cont-set-au}
    \Sigma_{\alpha=1}(q)  =  \rgm EuRz \cup \rgm VuVz 
  \end{equation}
  where
  \begin{align}
   &\label{jumpset}
    \rgm VuVz: =\left \{ (\tau,q',\xi)\, :
    \ \tau=0,   \text{ and } \exists\, \theta \in[0,1]\, : \
    \Big\{ \begin{array}{l}
         \theta \vcof u{q}u' = (1-\theta) \eta,  \\
        (1-\theta)  \partial\calR_0 (q,z') +   \theta
        \vcof z{q}z' \ni (1-\theta) \zeta
      \end{array} \right\}.
 \end{align}
\item[\underline{$\alpha \in (0,1)$}] the contact set is given by
  \begin{equation}
    \label{cont-set-am}
    \Sigma_{\alpha \in (0,1)}(q)  =  \rgm EuRz \cup \rgm BuVz \cup \rgm VuRz
  \end{equation}
  with 
  \begin{align}
    &\label{fastz}
     \begin{aligned}
       \rgm BuVz:= \{ (\tau,q',\xi): \ & \tau =0, \ q'=(0,z'), \  \xi =
       (\eta,\zeta) \text{ and } \\
      & \exists\, \thn z  \in [0,1]: \;
      (1{-}\thn z)\partial\calR_0 (q,z') + \thn  z
       \vcof z{q}z' \ni  (1{-}\thn z)\zeta \},
    \end{aligned}
  \\
  & \label{equilz}
   \rgm VuRz
    :=\left \{ (\tau,q',\xi): \; 
      (\tau,q',\xi)=(0,(u',z'),(\eta,\zeta)) \text{ and }
      \Big\{  \begin{array}{l}
    \exists\, \thn u \in [0,1]: \; \thn u \vcof u{q}u'= (1{-}\thn u) \eta,
    \\
  \partial\calR_0 (q,z') \ni \zeta
    \end{array}
 \right \}.
 \end{align}
\end{description}
\end{proposition}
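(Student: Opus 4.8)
The plan is to compute the contact set $\contset q$ defined in \eqref{contact-set} by directly analyzing when equality holds in \eqref{mo-big}, i.e.\ when $\Mo q\tau{q'}\xi = \langle q',\xi\rangle$, distinguishing the cases $\tau>0$ and $\tau=0$, and within the latter the three regimes $\alpha>1$, $\alpha=1$, $\alpha\in(0,1)$ governed by the corresponding formulae \eqref{def-Mop1}, \eqref{def-Mou1}, \eqref{def-Mom1} for $\moredname$. The basic tool throughout is the Cauchy--Schwarz/Young equality case: for a positive definite matrix $\mathbb M$ one has $\langle v,\xi\rangle \le \sqrt{\langle \mathbb M v,v\rangle}\,\sqrt{\langle \mathbb M^{-1}\xi,\xi\rangle}$ with equality iff $\xi = \lambda \mathbb M v$ for some $\lambda\ge 0$ (or $v=0$); this is what produces the multipliers $\thn u,\thn z,\theta$. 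I will also repeatedly use the characterization \eqref{charact-1-homog} of $\partial\calR_0(q,\cdot)$ and the identity $\conjz zq\zeta = \moreau zq{\zeta-\omega}$ for the (unique) minimizer $\omega\in\stab q$, together with the equivalences recorded in Remark~\ref{rmk:switch}: $\vpot zq{z'}=0\Leftrightarrow z'=0$, $\vpot uq{u'}=0\Leftrightarrow u'=0$, $\moreau uq\eta=0\Leftrightarrow \eta=0$, $\conjz zq\zeta=0\Leftrightarrow\zeta\in\stab q$.

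\textbf{Case $\tau>0$.} By \eqref{Mored-basic}, finiteness forces $\moreau uq\eta=\conjz zq\zeta=0$, hence $\eta=0$ and $\zeta\in\stab q=\partial\calR_0(q,0)$, and then $\Mo q\tau{q'}\xi=\calR_0(q,z')$. Equality $\calR_0(q,z')=\langle q',\xi\rangle=\langle\zeta,z'\rangle$ together with $\zeta\in\stab q$ (so $\langle\zeta,w\rangle\le\calR_0(q,w)$ for all $w$) is, by \eqref{charact-1-homog}, exactly $\zeta\in\partial\calR_0(q,z')$. This gives the component $\rgm EuRz$ in \eqref{ssu}, which appears in all three cases.

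\textbf{Case $\tau=0$, $\alpha>1$.} Here \eqref{def-Mop1} is finite iff either $z'=0$ or $\eta=0$, so I split accordingly. If $z'=0$: then $\calR_0(q,z')=0$ and $\langle q',\xi\rangle=\langle u',\eta\rangle$, while $\Mo q0{q'}\xi = 2\sqrt{\vpot uq{u'}}\sqrt{\moreau uq\eta}$; equality in the Young inequality $\langle u',\eta\rangle\le 2\sqrt{\vpot uq{u'}}\sqrt{\moreau uq\eta}$ holds iff $\eta=\lambda\vcof uqu'$ for some $\lambda\ge0$ (treating $u'=0$ separately), and renormalizing $\lambda=\frac{1-\thn u}{\thn u}$ with $\thn u\in[0,1]$ yields $\thn u\vcof uqu'=(1-\thn u)\eta$, i.e.\ the set $\rgm VuBz$ in \eqref{fastu}. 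If instead $\eta=0$ (and $z'$ arbitrary): $\langle q',\xi\rangle=\langle\zeta,z'\rangle$, and writing $\zeta=(\zeta-\omega)+\omega$ with $\omega\in\stab q$ the minimizer in \eqref{conju-z}, one needs simultaneously $\langle\omega,z'\rangle=\calR_0(q,z')$ — i.e.\ $\omega\in\partial\calR_0(q,z')$ — and equality in $\langle\zeta-\omega,z'\rangle\le2\sqrt{\vpot zq{z'}}\sqrt{\moreau zq{\zeta-\omega}}$, i.e.\ $\zeta-\omega=\lambda\vcof zqz'$. Combining and again substituting $\lambda\leftrightarrow\thn z$ gives $(1-\thn z)\partial\calR_0(q,z')+\thn z\vcof zqz'\ni(1-\thn z)\zeta$, which is $\rgm EuVz$ in \eqref{equilu}. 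This proves \eqref{cont-set-ap}.

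\textbf{Case $\tau=0$, $\alpha=1$.} Now \eqref{def-Mou1} couples both velocities and both forces: $\Mo q0{q'}\xi=\calR_0(q,z')+2\sqrt{\vpot zq{z'}+\vpot uq{u'}}\,\sqrt{\conjz zq\zeta+\moreau uq\eta}$. One writes $\langle q',\xi\rangle-\calR_0(q,z')\le\langle u',\eta\rangle+\langle\zeta-\omega,z'\rangle$ (using $\omega\in\stab q$ with $\langle\omega,z'\rangle\le\calR_0(q,z')$) and applies Cauchy--Schwarz in the block-diagonal metric $\mathrm{diag}(\vcof uq{},\vcof zq{})$ on $\R^{n+m}$. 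Equality then forces $\omega\in\partial\calR_0(q,z')$ and a \emph{single} proportionality constant $\lambda\ge0$ with $\eta=\lambda\vcof uqu'$, $\zeta-\omega=\lambda\vcof zqz'$; substituting $\lambda\leftrightarrow\theta$ produces the two coupled relations in \eqref{jumpset}, giving $\rgm VuVz$ and hence \eqref{cont-set-au}. The case $\alpha\in(0,1)$ is handled exactly as $\alpha>1$ with the roles of the pairs $(u',\eta)$ and $(z',\zeta)$ interchanged, as dictated by the symmetry \eqref{specular} and the symmetric formula \eqref{def-Mom1}: the branch $\conjz zq\zeta=0$ (i.e.\ $\zeta\in\stab q$) gives $\rgm VuRz$ in \eqref{equilz}, and the branch $u'=0$ gives $\rgm BuVz$ in \eqref{fastz}, proving \eqref{cont-set-am}.

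\textbf{Main obstacle.} The only genuinely delicate point is the bookkeeping in the $\eta=0$ (resp.\ $\zeta\in\stab q$) branches, where the decomposition $\zeta=\omega+(\zeta-\omega)$ must simultaneously realize equality in the $1$-homogeneous pairing (forcing $\omega\in\partial\calR_0(q,z')$) \emph{and} in the quadratic Young inequality for the residual $\zeta-\omega$; one must check that the minimizer $\omega$ of \eqref{conju-z} is the right choice and that no slack is lost — this uses strict convexity of $\moreau zq\cdot$, so the minimizer is unique and the variational inequality for $\omega$ over the closed convex set $\stab q$ is available. Everything else is a routine translation of equality cases in Cauchy--Schwarz/Young into the multiplier form, together with the elementary substitution $\lambda\in[0,\infty]\leftrightarrow\thn{}\in[0,1]$ via $\thn{}=1/(1+\lambda)$ (and the boundary values $\lambda=0$, $\lambda=\infty$ handled separately, corresponding to $\thn{}=1$ and $\thn{}=0$).
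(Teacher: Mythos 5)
Your proposal is correct and takes essentially the same route as the paper's proof: the split $\tau>0$ versus $\tau=0$ (with the three $\alpha$-regimes), the characterization \eqref{charact-1-homog}, the decomposition $\zeta=\omega+(\zeta-\omega)$ with $\omega$ the minimizer in \eqref{conju-z}, and the translation of the Cauchy--Schwarz/Fenchel--Young equality cases into the multipliers $\thn u,\thn z,\theta$; the only (equivalent) variation is that for $\alpha=1$, $\tau=0$ the paper produces the single coupling constant via the scalar rearrangement $2\sigma_1\sigma_2=\lambda\sigma_1^2+\lambda^{-1}\sigma_2^2$ with $\lambda=\sigma_2/\sigma_1$ and the Fenchel equivalence for $\calR_0+\vpotname z$ and $\vpotname u$ applied to the rescaled rate $\lambda q'$, instead of your block-diagonal Cauchy--Schwarz after subtracting $\omega$. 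One cosmetic slip: your normalization $\lambda=(1-\thn u)/\thn u$ is inverted, since it yields $\thn u\,\eta=(1-\thn u)\,\vcof uq{u'}$ rather than \eqref{fastu}; because $\thn u$ is only existentially quantified over $[0,1]$, relabeling $\thn u\mapsto 1-\thn u$ (i.e.\ taking $\thn{u}=\lambda/(1+\lambda)$) repairs this without changing the sets.
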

\noindent
As \eqref{char-contset} reveals, the contact set encompasses all the
relevant information on the evolution of a \emph{parameterized
  Balanced Viscosity} solution.  The form of the sets $\rgm EuRz, \,
\rgm VuBz\, \ldots$ which constitute it is strictly related to the
mechanical interpretation of $\pbv$ solutions which shall be explored
at the end of this section. Let us just explain here that
\begin{compactitem}
\item the set $\rgm EuRz$ corresponds to equilibrium for the variable
  $u$ (as $\eta=0$), and a \emph{stick-slip} regime for $z$, which
  evolves rate-independently as expressed by $ \partial\calR_0 (q,z')
  \ni \zeta$.  Observe that the stationary state $u'=z'=0$ is also
  encompassed.
\item  The set $\rgm Vu Bz$ corresponds to the case in which the variable $u$
  still has to relax to an equilibrium and thus is governed by a
  \emph{fast} dynamics at a jump $\tau=0$, while $z$ is ``blocked by
  viscosity'' and thus stays constant ($z'=0$).
\item The set $\rgm EuVz$ corresponds to the regime in which $z$
  evolves according to viscosity at a jump $\tau=0$, and $u$ follows
  $z$ in such a way that it is at an equilibrium ($\eta=0$).
\item The set $\rgm VuVz$ corresponds to the case where the evolution
  of the system at a jump $\tau=0$ is governed by viscosity both in
  $u$ and in $z$.
\item The set $ \rgm BuVz $ encompasses the case in which the variable
  $z$ at a jump $\tau=0$ evolves according to viscosity, while $u$ is
  blocked by viscosity ($u'=0$).
\item The set $ \rgm VuRz $ describes viscous evolution for $u$ and
  rate-independent evolution for $z$.
\end{compactitem}

\begin{remark}\upshape
\label{rmk:specular}
Let us stress once more that, as mentioned in advance, in the
vanishing-viscosity limit the evolution regimes for $\alpha>1$ and
$\alpha \in (0,1)$ mirror each other. Indeed, formulae
\eqref{cont-set-ap} and \eqref{cont-set-am} are specular, up to
observing that the analog of the equilibrium regime
$\mathrm{E}_{\mathsf{u}}$ is indeed the rate-independent regime
$\mathrm{R}_{\mathsf{z}}$,  see also Figure \ref{fig:SwitchRegimes}. 
\end{remark}

\begin{proof}[Proof of Proposition \ref{l:cont-set}]
In all the three cases $\alpha>1$, $\alpha =1$, and $\alpha \in (0,1)$,
for $\tau>0$ the contact condition
$\Mo{q}{\tau}{q'}{\xi} = \langle \xi, q'\rangle$
 can hold only if  the constraints
$\eta =0$ and $\zeta \in \stab  q$ are satisfied. Then, $\Mo{q}{\tau}{q'}{\xi} = \langle \xi, q'\rangle$ reduces to
$\calR_0 (q,z') = \langle \zeta,z'\rangle $. Since $\zeta \in \stab  q$, this is equivalent to
 $\zeta \in  \partial\calR_0 (q,z')$ by \eqref{charact-1-homog}.
This gives the  set
$\rgm EuRz$, which contributes to the contact set $\contset q$
in the three cases $\alpha>1$, $\alpha=1$, and $\alpha \in (0,1)$.

For $\alpha=1$, observe that in the case $\tau=0$ the contact condition is
\begin{equation}
\label{case-alpha1}
\calR_0(z') + 2\sqrt{\vpot zq{z'} + \vpot uq{u'} }\sqrt{\conjz
  zq{\zeta} + \moreau uq{\eta} } = \langle \zeta, z'\rangle + \langle
\eta, u'\rangle.
\end{equation}
Let us first address the case in which $\sigma_1:= \sqrt{\vpot
  zq{z'}+\vpot uq{u'} }=0$ or $\sigma_2:= \sqrt{\conjz
  zq{\zeta}+\moreau uq{\eta} } =0$. The former case corresponds to the
stationary state $u'=z'=0$, which means $\theta=1$ in \eqref{jumpset}.
The latter to $\conjz zq\zeta =0$ (if and only if $\zeta \in \stab q$)
and $\eta=0$ Hence \eqref{case-alpha1} becomes $\calR_0(z')= \langle
\zeta, z'\rangle$, whence $\zeta \in \partial\calR_0 (q,z')$ by
\eqref{charact-1-homog}, again.  This corresponds to $\theta=0$ in
\eqref{jumpset}.  If $\sigma_1 \sigma_2>0$, then we rewrite $2\sigma_1
\sigma_2$ as $\lambda \sigma_1^2 + \frac1\lambda \sigma_2^2$, with
$\lambda>0$ given by $\lambda = \frac{\sigma_2}{\sigma_1}$. With such
$\lambda$ \eqref{case-alpha1} rewrites as
\[
\calR_0(z') + \lambda ( \vpot zq{z'} + \vpot uq{u'}) + \frac1{\lambda}
( \conjz zq{\zeta}+\moreau uq{\eta}) = \langle \zeta, z'\rangle +
\langle \eta, u'\rangle.
\]
Upon multiplying both sides by $\lambda$, using that $\vpotname z$ and
$\vpotname u$ are positively homogeneous of degree $2$, and
rearranging terms, we get
\[
\calR_0(z') +  \vpot zq{ \lambda z'} +  \conjz zq{\zeta} - \langle \zeta,\lambda  z'\rangle
 =  \langle \eta, \lambda u'\rangle -  \vpot uq{\lambda u'}  - \moreau uq{\eta}.
\]
By the Fenchel-Moreau equivalence, this gives
\[
  \begin{array}{ll}
  &
    \vcof u{q}{(\lambda u')} = \eta,  \\
&    \partial\calR_0 (q, \lambda z') +
    \vcof z{q}{(\lambda z')} \ni  \zeta
    \end{array}
\]
with $\lambda >0$. Then, \eqref{jumpset}
follows with $\theta \in (0,1)$ such that $\lambda = \frac{\theta}{1-\theta}$.
 All in all, for
 $\alpha =1$ we have proved that, if $(\tau,q',\xi) \in \Sigma_{\alpha=1}(q)$, then either  $(\tau,q',\xi) \in \rgm EuRz$, or
 $(\tau,q',\xi) \in \rgm VuVz$. This concludes  the proof of \eqref{cont-set-au} for $\Sigma_{\alpha=1}(q)$.

In the case $\alpha>1$ and $\tau=0$, $\Mo{q}{\tau}{q'}{\xi} $ is finite if and only if either $z'=0$, or $\eta=0$. In the former case, the contact condition reduces to
$\sqrt{\langle \vcof uq{u'}, u' \rangle} \sqrt{\langle \vcofinv uq{\eta}, \eta \rangle} = \langle \eta, u'\rangle$, which is equivalent to the fact that there exists $\thn u \in [0,1]$ with $\thn u \vcof u{q}u'= (1-\thn u) \eta$.  This yields the set $\rgm VuBz$. In the latter case,
the contact condition rephrases as
\[
\calR_0(q,z') + \sqrt{\langle \vcof zq{z'}, z' \rangle} \sqrt{\langle \vcofinv zq{(\zeta{-}\omega)}, \zeta{-}\omega \rangle}  = \langle \zeta , z'\rangle =   \langle \omega,z'\rangle+
\langle \zeta{-}\omega,z'\rangle,
\]
 with $\omega \in \stab q$ such that $\conjz zq{\zeta} = \frac12 \langle \vcofinv zq{(\zeta{-}\omega)}, \zeta{-}\omega \rangle$. It is immediate to check that the above chain of equalities implies
 \[
 \left\{
 \begin{array}{ll}
 \omega \in \partial \calR_0(q,z'),
 \\
 (1-\thn z )(\zeta{-}\omega )= \thn z     \vcof zq{z'} \quad \text{for some } \thn  z \in [0,1].
 \end{array}
 \right.
 \]
This yields the set $\rgm EuVz$.
All in all,
in the case $\alpha>1$
we have proved that, if $(\tau,q',\xi) \in \Sigma_{\alpha>1}(q)$, then either  $(\tau,q',\xi) \in \rgm EuRz$, or
 $(\tau,q',\xi) \in \rgm VuBz$, or  $(\tau,q',\xi) \in \rgm EuVz$. This concludes  \eqref{cont-set-ap}.

The proof of  \eqref{cont-set-am} follows the very same lines and is thus omitted.
\end{proof}

The \underline{\bf main result} of this paper is the following
theorem, which is in fact a direct consequence of the characterization
\eqref{char-contset} of $\pbv$ solutions in terms of the contact set,
and of Proposition \ref{l:cont-set}.  Observe that, we confine
ourselves to \emph{non-degenerate} $\pbv$ solutions only. This is not
restrictive, in view of Remark \ref{rmk:nondeg}.

\begin{theorem}[Reformulation as a system of  subdifferential inclusions]
\label{prop:diff-incl}
Assume \eqref{ass:dissip-pot-R}, \eqref{ass:dissip-pot-Vz},
\eqref{ass:dissip-pot-Vu}, and \eqref{ass:E}.  A curve $(\sft,\sfq)
\in \AC([0,S]; [0,T]\times \calQ)$ with nondecreasing $\sft$ is a
\emph{non-degenerate} parameterized Balanced Viscosity solution to the
rate-independent system $(\calQ,\calE, \calR_0 + \eps \vpotname z+
\eps^\alpha \vpotname u)$ if and only if $\sft' + |\sfq'|>0$ a.e.\ in
$(0,S)$ and there exist two Borel functions $\thn u, \, \thn z: [0,S]
\to [0,1]$ such that
the pair $(\sft,\sfq)$ with $\sfq= (\sfu,\sfz)$ satisfies  the
system of equations for a.a.\ $s\in (0,S)$: 
\begin{equation}
\label{diff-syst}
\begin{aligned}
&
\thn u(s)\, \vcof u{\sfq(s)}{\sfu'(s)} + (1{-}\thn{u} (s))\, \rmD_u
\enet{\sft(s)}{\sfu(s)}{\sfz(s)} \ni 0,
\\
&
(1{-}\thn{z} (s))\,  \partial\calR_0 (\sfq(s),\sfz'(s)) +
\thn z(s)\, \vcof z{\sfq(s)}{\sfz'(s)} +  (1{-}\thn{z} (s))\,  \rmD_z
\enet{\sft(s)}{\sfu(s)}{\sfz(s)} \ni 0,
\end{aligned}
\end{equation}
with
\begin{equation}
\label{switching}
\sft'(s)\,  \thn u(s) =  \sft'(s) \, \thn z (s) =0
\end{equation}
and the  following additional conditions depending on $\alpha$: 
\begin{align}
&&&\text{\underline{$\alpha>1$:}}&
  \label{add-con-pu} \thn u (s) \,(1 {-}\thn z (s)) =0;
   &&\\
&&&\text{\underline{$\alpha=1$:}} &
  \label{add-con-uu} \thn u(s) = \thn z (s); &&\\
&&&\text{\underline{$\alpha\in (0,1)$:}} &
   \label{add-con-mu} \thn z (s)\, (1{-}\thn u (s)) =0 .&&
\end{align}
\end{theorem}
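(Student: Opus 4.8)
The proof is a direct corollary of the characterization of $\pbv$ solutions in Proposition~\ref{prop:charact}(3) together with the explicit description of the contact set in Proposition~\ref{l:cont-set}. By Proposition~\ref{prop:charact}(3), a curve $(\sft,\sfq)\in\AC([0,S];[0,T]\times\calQ)$ with $\sft$ nondecreasing is a $\pbv$ solution if and only if
$
(\sft'(s),\sfq'(s),-\rmD_q\ene{\sft(s)}{\sfq(s)})\in\contset{\sfq(s)}
$
for a.a.\ $s\in(0,S)$. Writing $\xi=(\eta,\zeta)=-\rmD_q\calE$, so that $\rmD_u\calE=-\eta$ and $\rmD_z\calE=-\zeta$, the system \eqref{diff-syst} becomes, pointwise in $s$, the pair of relations $\thn u\,\vcof u q{u'}=(1-\thn u)\eta$ and $(1-\thn z)\partial\calR_0(q,z')+\thn z\,\vcof z q{z'}\ni(1-\thn z)\zeta$, with $q'=(u',z')$. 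Hence the whole statement is reduced to a \emph{pointwise} algebraic equivalence: for every fixed $q\in\calQ$, a triple $(\tau,q',\xi)$ lies in $\contset q=\Sigma_{\alpha\lessgtr1}(q)$ exactly when there exist $\thn u,\thn z\in[0,1]$ satisfying those two relations, the switching condition $\tau\thn u=\tau\thn z=0$, and the relevant $\alpha$-dependent constraint \eqref{add-con-pu}/\eqref{add-con-uu}/\eqref{add-con-mu}. The non-degeneracy $\sft'+|\sfq'|>0$ is a hypothesis (resp.\ conclusion) present on both sides of the equivalence and is simply carried along.

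The decoding is governed by the following elementary rules, all immediate from \eqref{ass:dissip-pot-Vz}--\eqref{ass:dissip-pot-Vu} (positive definiteness of $\vcof uq{}$, $\vcof zq{}$) and the $1$-homogeneity characterization \eqref{charact-1-homog}: in the second relation $\thn z=1$ forces $\vcof zq{z'}=0$, hence $z'=0$; in the first relation $\thn u=1$ forces $u'=0$; $\thn u=0$ gives $\eta=0$; $\thn z=0$ turns the second relation into $\zeta\in\partial\calR_0(q,z')$ (which also yields $\zeta\in\stab q$); and if $\tau>0$ the switching condition forces $\thn u=\thn z=0$. For $\alpha>1$ one then matches $\Sigma_{\alpha>1}(q)=\rgm EuRz\cup\rgm VuBz\cup\rgm EuVz$ as follows: on $\rgm EuRz$ (which has $\tau>0$, $\eta=0$, $\zeta\in\partial\calR_0(q,z')$) take $\thn u=\thn z=0$; on $\rgm VuBz$ (so $z'=0$, and $\thn u\vcof uq{u'}=(1-\thn u)\eta$ for some $\thn u$) take that $\thn u$ and $\thn z=1$; on $\rgm EuVz$ (so $\eta=0$, and the second relation holds for some $\thn z$) take $\thn u=0$ and that $\thn z$; in every case \eqref{diff-syst}, \eqref{switching}, \eqref{add-con-pu} hold. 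Conversely, if $\tau>0$ then $\thn u=\thn z=0$ and \eqref{diff-syst} gives $\rgm EuRz$; if $\tau=0$, then by \eqref{add-con-pu} either $\thn z=1$, whence $z'=0$ and the triple is in $\rgm VuBz$, or $\thn u=0$, whence $\eta=0$ and the triple is in $\rgm EuVz$.

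The case $\alpha=1$ is the same, with $\Sigma_{\alpha=1}(q)=\rgm EuRz\cup\rgm VuVz$ and the common value $\theta:=\thn u=\thn z$ imposed by \eqref{add-con-uu}: for $\tau>0$ one gets $\theta=0$ and $\rgm EuRz$, while for $\tau=0$ the admissible $\theta$ is precisely the one produced in the proof of Proposition~\ref{l:cont-set}, namely $\theta=\sigma_2/(\sigma_1+\sigma_2)$ (with the convention $0/0=0$), where $\sigma_1=(\vpot zq{z'}+\vpot uq{u'})^{1/2}$ and $\sigma_2=(\conjz zq\zeta+\moreau uq\eta)^{1/2}$, giving membership in $\rgm VuVz$. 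The case $\alpha\in(0,1)$ is the mirror image of $\alpha>1$ (in agreement with the scaling \eqref{specular} and Remark~\ref{rmk:specular}), with $\Sigma_{\alpha\in(0,1)}(q)=\rgm EuRz\cup\rgm BuVz\cup\rgm VuRz$ and \eqref{add-con-mu}: here $\thn u=1$ forces $u'=0$ and matches $\rgm BuVz$, $\thn z=0$ forces $\zeta\in\partial\calR_0(q,z')$ and matches $\rgm VuRz$, and $\thn u=\thn z=0$ (forced when $\tau>0$) matches $\rgm EuRz$.

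It remains to upgrade this pointwise equivalence to the measurable statement of the theorem. All quantities entering the decoding are Borel functions of $s$ through $s\mapsto(\sft'(s),\sfu'(s),\sfz'(s),\rmD_q\ene{\sft(s)}{\sfq(s)})$, and the subsets of $(0,S)$ on which the triple falls into each regime of Proposition~\ref{l:cont-set} are Borel, being preimages of the closed sets $\rgm EuRz,\rgm VuBz,\dots$; moreover the selectors $\thn u(s),\thn z(s)$ exhibited above are themselves Borel (e.g.\ $\thn u=|\eta|^2/\langle\vcof uq{u'}+\eta,\eta\rangle$ when the denominator is nonzero and $\thn u=0$ otherwise, and the explicit $\theta$ above), so one may just patch them together; alternatively, one invokes a standard measurable selection theorem for the set-valued map $s\mapsto\{(\thn u,\thn z)\in[0,1]^2:\ \eqref{diff-syst}\text{--}\eqref{switching}\text{ and the }\alpha\text{-condition hold at }s\}$, which has closed nonempty values. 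Combining this with Proposition~\ref{prop:charact}(3) yields the asserted equivalence. The only point requiring genuine care — there being no serious obstacle once Propositions~\ref{prop:charact} and~\ref{l:cont-set} are in hand — is precisely this bookkeeping of all the degenerate configurations ($u'=0$, $z'=0$, $\eta=0$, or $\zeta\in\stab q$, and their combinations) so as to always produce an \emph{admissible} Borel pair $(\thn u,\thn z)$, and the measurable-selection step, which must be phrased with some attention because on the ``both-viscous'' portions of $(0,S)$ the selectors need not be uniquely determined.
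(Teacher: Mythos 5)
Your proposal is correct and follows exactly the route the paper intends: Theorem \ref{prop:diff-incl} is obtained as a direct consequence of the contact-set characterization in Proposition \ref{prop:charact}(3) combined with the explicit description of $\contset q$ in Proposition \ref{l:cont-set}, which is precisely your pointwise decoding of the regimes into admissible pairs $(\thn u,\thn z)$. Your additional care about Borel selectors and the degenerate configurations is a welcome elaboration of details the paper leaves implicit, but it does not constitute a different argument.
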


 Figure \ref{fig:SwitchRegimes} displays the structure of the
allowed values for the parameters $(t',\thn u,\thn z)$ depending on $\alpha$.

\begin{figure}[h]
\centerline{\unitlength1cm
\begin{picture}(0,0)
\put(1.93,1){
    \put(-1.2,-1){$t'$}\put(2.3,.1){$\thn u$}\put(-0.4,1.8){$\thn z$}
    \put(-1.8,-0.4){$\rgm EuRz$} \put(0.6,-0.4){$\rgm VuRz$}
    \put(1.85,1.1){$\rgm BuVz$} }
\put(7.3,1){
     \put(-1.2,-1){$t'$}\put(2.3,.1){$\thn u$}\put(-0.4,1.8){$\thn z$}
     \put(-1.8,-0.4){$\rgm EuRz$} \put(0.3,1.2){$\rgm VuVz$} }
\put(12.6,1){
     \put(-1.2,-1){$t'$}\put(2.3,.1){$\thn u$}\put(-0.4,1.8){$\thn z$}
     \put(-1.8,-0.4){$\rgm EuRz$} \put(0.15,0.75){$\rgm EuVz$}
     \put(0.5,1.9){$\rgm VuBz$} }
\end{picture}%
\includegraphics[width=15\unitlength]{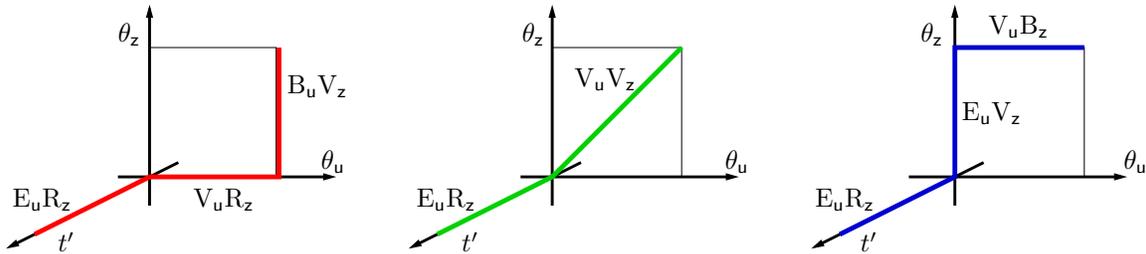}}
\caption{The switching between the different regimes, depending on the
  cases $\alpha<1$, $\alpha=1$,
  and $\alpha>1$, are displayed via the allowed combinations of the
  triples $(t',\thn u,\thn z)$.}
\label{fig:SwitchRegimes}
\end{figure}

\begin{remark}
\label{once-more-spec}
\upshape Observe that the conditions \eqref{add-con-pu} and
\eqref{add-con-mu} are specular (cf.\ Remark \ref{rmk:specular}),
revealing once more that the evolution regimes for $\alpha>1$ and
$\alpha<1$ reflect each other. Nonetheless, a major difference occurs
in that, under suitable conditions, for $\alpha>1$ the regime $\rgm
VuBz$ only occurs at the beginning, when $u$ relaxes fast to
equilibrium, cf.\ Proposition \ref{prop:alex}.
\end{remark}

Finally, let us get further insight into the mechanical interpretation of system \eqref{diff-syst}, with the constraints
\eqref{switching} and
 \eqref{add-con-pu}--\eqref{add-con-mu}.
Preliminarily, let us point out that, as in the case of parameterized solutions to the rate-independent system
\begin{equation}
\label{simple-rip}
\partial \calR_0(z(t),z'(t)) +\rmD_q \calI(t,z(t)) \ni 0 \qquad \text{in } (0,T),
\end{equation}
in the \emph{sole} variable $z$, $\sft'(s)=0$ if and only if the
system is jumping in the (slow) external time scale.  Therefore, from
\eqref{switching} we gather that, in all of the three cases
$\alpha>1$, $\alpha=1$, and $\alpha \in (0,1)$, when the system does
not jump, then it is either in the \emph{sticking} regime
(i.e. $\mathsf{u}'= \mathsf{z}'=0$), or in the \emph{sliding regime},
namely the evolution of $\mathsf{z}$ is purely rate-independent
(i.e. $\partial\calR_0 (\sfq,\mathsf{z}') + \rmD_z \ene
{\sft}{\mathsf{q}} \ni 0$), and $\mathsf{u}$ follows $\mathsf{z}$ in
such a way that it is at an equilibrium (i.e. $-\rmD_u \ene
{\sft}{\mathsf{q}}=0$).  It is the description of the system behavior
at jumps that significantly differs for $\alpha>1$, $\alpha=1$, and
$\alpha\in (0,1)$. \medskip

\paragraph{\bfseries \underline{Case $\alpha>1$:} fast relaxation of $u$.}
Here $\mathsf{u}$ relaxes faster to equilibrium than $\mathsf{z}$.
With \eqref{switching} and \eqref{add-con-pu} we are imposing at a
jump that either $\mathsf{z}'=0$ (which follows from $\thn z=1 $,
i.e.\ $\rgm VuBz$) or $\mathsf{u}$ is at equilibrium (corresponding to
$\thn u =0$, i.e.\ $\rgm EuVz$). In fact, $\mathsf{z}$ cannot change
until $\mathsf{u}$ has relaxed to equilibrium.
When $\mathsf{u}$ has reached the equilibrium, then $\mathsf{z}$ may
have either a \emph{sliding jump} (i.e. $\thn z =0$), or a
\emph{viscous jump} ($\thn z \in (0,1)$).

Our next result shows that, in fact, under the  condition  that the
energy $\calE$ is uniformly convex with respect to the variable $u$
(cf.\ Proposition \ref{prop:aprio-eps}), after an initial phase in
which $\sfz$ is constant and $\mathsf{u}$ relaxes to an equilibrium
evolving by viscosity (i.e.\ the  solution  is in regime
$\rgm VuBz$), $\sfu$ never leaves the equilibrium afterwards.  In
that case  the evolution of the system is completely described by
$\mathsf{z}$, which turns out to be a parameterized Balanced Viscosity
solution to the rate-independent system driven by the \emph{reduced
energy functional} obtained minimizing out the variable $u$.

\begin{proposition}
\label{prop:alex}
Assume \eqref{ass:dissip-pot-R}, \eqref{ass:dissip-pot-Vz},
\eqref{ass:dissip-pot-Vu}, and \eqref{ass:E}.  Additionally, suppose
that  $\calE$ complies  with \eqref{en-plus}, and denote by
$u=M(t,z)$ the unique solution of $\rmD_u\calE(t,u,z)=0$, i.e.\ the
minimizer of $\calE(t,\cdot,z)$.  Let $(\sft,\sfq) \in \AC ([0,S];
[0,T]\times \calQ )$ be a parameterized Balanced Viscosity solution to
the rate-independent system $(\calQ,\calE, \calR_0 + \eps \vpotname z+
\eps^\alpha \vpotname u)$ with $\alpha>1$.  Set
\begin{equation}
\label{eq:frakS}
\mathfrak S:= \{ s \in [0,S]\, : \ \rmD_u \ene {\sft (s)}{\sfq (s)} =0\}.
\end{equation}
Then, $\mathfrak S$ is either empty or it has the form $[s_*, S]$ for
some $s_*\in [0,S]$.

(a) Assume $s_*>0$, then for $s\in [0,s_*)=[0,S]\setminus \mathfrak S$
we have $\sft(s)=\sft(0)$
and $\sfz(s)= \sfz(0)$, whereas $\sfu$ is a solution to the
reparameterized the gradient flow for
$(\R^n,\calE(\sft(0),\cdot,\sfz(0)), \bbV_\sfu)$ (regime $\rgm VuBz$), namely
\begin{equation}
\label{eq:GS-u-only}
0 = \thn u(s) \, \bbV_\sfu (\sfu(s),\sfz(0)) 
 \dot\sfu(s) + (1{-}\thn u(s))\, \rmD_u
\calE(\sft(0),\sfu(s),\sfz(0)) \quad \text{with }\sfu(0) \neq
M(\sft(0),\sfz(0)).
\end{equation}

(b) Assume $\mathfrak S=[s_*,S]$ with $s_*<S$, then for $s\in
[s_*,S]$ we have $\sfu(s)=M(\sft(s),\sfz(s))$ whereas the pair
$(\mathsf{t},\mathsf{z})$ is a parameterized Balanced Viscosity
solution to the reduced rate-independent system $(\R^m, \calI, \calR_0 +
\eps \vpotname z)$ with the
\emph{reduced energy functional } $\calI : [0,T] \times \R^m \to
\R;  (t,z) \mapsto \min_{u \in \R^n} \en
t{u}{z} = \calE(t,M(t,z),z)$, which corresponds to the regimes $\rgm
EuVz$ and $\rgm EuRz$.
\end{proposition}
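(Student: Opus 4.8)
The plan is to extract the evolution of $(\sft,\sfq)$ entirely from the subdifferential characterization of $\pbv$ solutions for $\alpha>1$ provided by Theorem \ref{prop:diff-incl} --- the system \eqref{diff-syst}, the switching condition \eqref{switching}, and the extra constraint \eqref{add-con-pu} --- using the strict convexity of $\calE(t,\cdot,z)$ from \eqref{en-plus} to control the $u$-component. Set $g(s):=\rmD_u\enet{\sft(s)}{\sfu(s)}{\sfz(s)}$; since $(\sft,\sfq)\in\AC$ and $\calE\in\rmC^2$, the map $g$ is continuous, hence $\mathfrak S=g^{-1}(\{0\})$ is closed. Showing that $\mathfrak S$ is empty or of the form $[s_*,S]$ reduces to proving \emph{forward invariance}: $s_0\in\mathfrak S$ and $s_1>s_0$ imply $s_1\in\mathfrak S$; then $\mathfrak S=[s_*,S]$ with $s_*:=\min\mathfrak S$ whenever $\mathfrak S\ne\emptyset$.

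To prove forward invariance I would argue by contradiction: otherwise closedness of $\mathfrak S$ produces $a\in\mathfrak S$ with $g\ne0$ on $(a,s_1]$. For a.a.\ $s\in(a,s_1)$ the first line of \eqref{diff-syst}, $\thn u(s)\vcof u{\sfq(s)}{\sfu'(s)}+(1{-}\thn u(s))g(s)=0$, together with $g(s)\ne0$ forces $\thn u(s)>0$; then \eqref{switching} gives $\sft'(s)=0$ and \eqref{add-con-pu} gives $\thn z(s)=1$, whence the second line of \eqref{diff-syst} yields $\vcof z{\sfq(s)}{\sfz'(s)}=0$, i.e.\ $\sfz'(s)=0$ by positive definiteness of $\bbV_\sfz$. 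Thus $\sft$ and $\sfz$ are constant on $[a,s_1]$, say $\equiv t_a$ and $\equiv z_a$, and $\sfu'(s)=-\tfrac{1-\thn u(s)}{\thn u(s)}\vcofinv u{\sfq(s)}{g(s)}$, so the chain rule and \eqref{inv-vcof-1} give $\tfrac{\dd}{\dd s}\enet{t_a}{\sfu(s)}{z_a}=\langle g(s),\sfu'(s)\rangle\le0$ a.e.\ on $(a,s_1)$. Since $g(a)=0$ means $\sfu(a)$ is the unique minimizer of $\enet{t_a}{\cdot}{z_a}$, the non-increasing map $s\mapsto\enet{t_a}{\sfu(s)}{z_a}$ already sits at its minimum at $s=a$ and therefore stays there on $[a,s_1]$; uniqueness of the minimizer (from \eqref{en-plus}) then forces $\sfu(s)\equiv M(t_a,z_a)$, hence $g\equiv0$ on $(a,s_1]$ --- a contradiction. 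This is the step I expect to be the main obstacle: it is the only point where uniform convexity in $u$ is genuinely used, and it requires combining all of \eqref{diff-syst}, \eqref{switching}, \eqref{add-con-pu}.

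For part (a) I would run the very same computation on the open set $(0,s_*)=[0,S]\setminus\mathfrak S$ (setting $s_*:=S$ if $\mathfrak S=\emptyset$): there $g\ne0$, so $\thn u>0$, $\sft'=0$, $\thn z=1$ and $\sfz'=0$ a.e., hence $\sft\equiv\sft(0)$ and $\sfz\equiv\sfz(0)$ on $[0,s_*)$, while the first line of \eqref{diff-syst} is precisely the reparameterized gradient flow \eqref{eq:GS-u-only}, with $\sfu(0)\ne M(\sft(0),\sfz(0))$ because $0\notin\mathfrak S$; equivalently $(\sft'(s),\sfq'(s),-\rmD_q\ene{\sft(s)}{\sfq(s)})$ lies in the regime $\rgm VuBz$ of \eqref{fastu} for a.a.\ $s\in(0,s_*)$.

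For part (b), on $\mathfrak S=[s_*,S]$ we have $g\equiv0$, so $\sfu(s)=M(\sft(s),\sfz(s))$; since $M\in\rmC^1$ by the implicit function theorem, $\sfu$ is still absolutely continuous, $\calI\in\rmC^2$, and the envelope identities give $\ene{\sft}{\sfq}=\calI(\sft,\sfz)$, $\rmD_z\ene{\sft}{\sfq}=\rmD_z\calI(\sft,\sfz)$ and $\partial_t\ene{\sft}{\sfq}=\partial_t\calI(\sft,\sfz)$ on $[s_*,S]$; moreover $\calI$ inherits the coercivity and power control \eqref{ass:E}, and $\calR_0,\vpotname z$ evaluated at $q=(M(t,z),z)$ remain admissible dissipation potentials (now also $t$-dependent, which is immaterial). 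It then remains to restrict the energy-dissipation balance \eqref{enid-param-lim} to $[s_*,S]$ and recognize its dissipation integrand as that of the reduced system: along $\mathfrak S$ the force $\eta:=-\rmD_u\ene{\sft}{\sfq}$ vanishes, so $\moreau u{\sfq}{\eta}=0$, and the $\alpha>1$ case of Proposition \ref{prop-Gamma-conv} shows that $\Mo{\sfq}{\sft'}{\sfq'}{-\rmD_q\ene{\sft}{\sfq}}$ collapses to $\calR_0(\sfq,\sfz')$ plus $0$ (resp.\ $+\infty$) when $\sft'>0$ according as $\conjz z{\sfq}{-\rmD_z\calI(\sft,\sfz)}=0$ (resp.\ $>0$), and to $\calR_0(\sfq,\sfz')+2\sqrt{\vpot z{\sfq}{\sfz'}}\sqrt{\conjz z{\sfq}{-\rmD_z\calI(\sft,\sfz)}}$ when $\sft'=0$ --- i.e.\ exactly the $\Gamma$-limit dissipation functional of the single-variable system $(\R^m,\calI,\calR_0+\eps\vpotname z)$ evaluated at $(\sfz,\sft',\sfz',-\rmD_z\calI(\sft,\sfz))$. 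Hence \eqref{enid-param-lim} becomes, on $[s_*,S]$, the energy-dissipation balance defining a $\pbv$ solution of $(\R^m,\calI,\calR_0+\eps\vpotname z)$; equivalently one reads off the same conclusion from the characterization \eqref{char-contset} and Proposition \ref{l:cont-set}, since on $\mathfrak S$ the triple $(\sft',\sfq',-\rmD_q\ene{\sft}{\sfq})$ sits in $\rgm EuRz\cup\rgm EuVz$, so that the $z$-inclusion reduces to $(1{-}\thn z)\partial\calR_0(\sfq,\sfz')+\thn z\bbV_\sfz(\sfq)\sfz'+(1{-}\thn z)\rmD_z\calI(\sft,\sfz)\ni0$ with $\sft'\thn z=0$. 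The remaining checks (the envelope identities, admissibility of the reduced data, and the explicit form of the single-variable $\Gamma$-limit, cf.\ \cite{MRS10,mielke-rossi-savare2013}) are routine.
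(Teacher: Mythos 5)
Your proposal is correct, and its backbone is the same as the paper's: the subdifferential characterization \eqref{diff-syst}--\eqref{add-con-pu} of Theorem \ref{prop:diff-incl}, the observation that off $\mathfrak S$ one must have $\thn u>0$, hence $\sft'=0$ by \eqref{switching}, $\thn z=1$ by \eqref{add-con-pu} and so $\sfz'=0$, which freezes $(\sft,\sfz)$ and leaves a (reparameterized) gradient flow for $\sfu$, and the uniform convexity \eqref{en-plus} to pin $\sfu$ at the minimizer $M(t,z)$. The differences are in the execution, and they are worth noting. For the structure of $\mathfrak S$, the paper decomposes the complement into maximal intervals and rules out those whose left endpoint lies in $\mathfrak S$ by combining constancy of the gradient flow started at the global minimizer with the \emph{strict} decrease of $s\mapsto\calE(\sft(s_1),\sfu(s),\sfz(s_1))$, the latter resting on the non-degeneracy condition $\sft'+|\sfu'|+|\sfz'|>0$; your forward-invariance argument instead uses only that the energy is non-increasing along the $u$-flow and already equals its minimum at the left endpoint, so uniqueness of the minimizer forces $\rmD_u\ene{\sft}{\sfq}\equiv0$ there, a contradiction --- this bypasses non-degeneracy and the auxiliary time change $\lambda=(1-\thn u)/\thn u$ used in the paper's Step 1. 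For part (b), the paper verifies the reduced subdifferential system (inclusion for $\sfz$, switching relation, and non-degeneracy of $(\sft,\sfz)$) via the envelope identity $\rmD_z\calI(t,z)=\rmD_z\calE(t,M(t,z),z)$, whereas your primary route restricts the energy-dissipation balance \eqref{enid-param-lim} to $[s_*,S]$ and notes that, since $-\rmD_u\ene{\sft}{\sfq}=0$ there, the functional $\calM_0$ for $\alpha>1$ collapses to the dissipation functional of the single-variable system $(\R^m,\calI,\calR_0+\eps\vpotname z)$; this is closer to Definition \ref{def:bv-param}, needs no non-degeneracy check, and only costs the envelope identities plus the (harmless, as you flag) $t$-dependence that the reduced dissipation potentials acquire through $M(t,z)$. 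Both routes are sound; the paper's additionally records the non-degeneracy of the reduced solution, which your write-up leaves implicit.
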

\begin{proof}
  To avoid overloaded notation we will often omit the state-dependence
  of the functions $\bbV_\sfu$ and $\bbV_\sfz$.  For easy reference we
  repeat all the conditions for a $\BV$ solution $(\sft,\sfq)$ (cf.\
  Theorem \ref{prop:diff-incl}), in the case $\alpha>1$: 
\begin{align*}
&\text{(i)} \quad 0=\thn u \bbV_\sfu \sfu' + (1{-}\thn u) \rmD_u
\calE(\sft,\sfu,\sfz),\qquad
\text{(ii)}\quad  0\in (1 {-}\thn z)\partial\calR_0(\sfq,\sfz')+ \thn z
\bbV_\sfz \sfz'  + (1{-}\thn z) \rmD_z \calE(\sft,\sfu,\sfz),\\
&\text{(iii)}\quad \sft'\thn u=0,\qquad \text{(iv)} \quad \sft'\thn z
=0, \qquad \text{(v)} \quad  \thn u\,(1{-}\thn z)=0, \qquad
\text{(vi)} \quad \sft'+|\sfu'|+|\sfz'|>0,
\end{align*}
which have to hold for a.a.\ $s\in  (0,S). $

 \underline{Step 1:} By the continuity of $(\sft ,\sfz)$  and
  $\rmD_u\calE$ the set $\mathfrak S$ is closed, hence its complement
  is relatively open. Consider an interval $(s_1,s_2)$ not
  intersecting with $\mathfrak S$.  Using (i) we find $\thn u>0$ a.e.\
  in $(s_1,s_2)$. 
  Hence, (iii) implies $\sft'=0$ a.e., and we obtain
  $\sft(s)=\sft(s_1)$ for $s\in [s_1,s_2]$. By (v) we find $\thn z=1$
  a.e.\ Now, (ii) implies $z'=0$ a.e., which implies
  $\sfz(s)=\sfz(s_1)$ for $s\in [s_1,s_2]$. From (vi) we conclude
  $\sfu'\neq 0$ a.e. Thus, we summarize
\[
\sft(s)=\sft(s_1), \quad \sfz(s)=\sfz(s_1), \quad 0 =
\bbV_u(\sfu(s),\sfz(s_1)) \sfu'(s) + \lambda(s)
\rmD_u\calE(\sft(s_1),\sfu(s), \sfz(s_1)),
\]
where $\lambda(s)=(1{-}\thn u(s))/\thn u(s) \in (0,\infty)$ a.e. In
particular, $\sfu$ satisfies \eqref{eq:GS-u-only}.  From
$u\in \AC([0,S];\R^m)$ and (i) we obtain $\lambda\in
L^1(s_1,s_2)$. Setting $\tau(s)=\int_{s_1}^s \lambda(\sigma)\dd \sigma$
and defining  the inverse $\hat s$ via $s=\hat{s}(\tau)$ 
we find $\hat s'(\tau)>0$ and 
$\hat s \in W^{1,1}(0,\tau(s_2))$. Moreover, the function $\hat u:
\tau\mapsto \sfu(\hat s(\tau))$ is a solution of the gradient flow
\begin{equation}
  \label{eq:GS-u-resc}
 0 = \bbV_u(\hat u(\tau),\sfz(s_1)) \hat u{}'(\tau) +
\rmD_u\calE(\sft(s_1),\hat u(\tau), \sfz(s_1)).
\end{equation} 
Furthermore,  we see that $s\mapsto \calE(\sft(s_1),\sfu(s),\sfz(s_1))$ is
strictly decreasing on $[s_1,s_2]$, since its time derivative is given
by $-\langle \sfu'(s),\bbV_u\sfu'(s)\rangle/\lambda(s)$ which is
negative a.e.

\underline{Step 2:} Since $\mathfrak S$ is closed the complement is an
at most countable disjoint union of intervals of the form $(s_1,S]$,
$(s_2,s_3)$, $[0,s_4)$, or $[0,S]$ which are maximal in the sense that
they cannot be extended without meeting  $\mathfrak{S}$. 
Thus, for the ``open'' sides $s_j$ this means $s_j \in \mathfrak S$.
In the first two cases this means $\sfu(s_j)=M(\sft(s_j),\sfz(s_j))$,
i.e.\ we start a gradient flow with initial condition in the global
minimizer. Hence, the solution stays constant for all future times,
i.e.\ $\sfu(s)=\sfu(s_{1,2})$ for  $s\in (s_1,S]$ or 
$(s_2,s_3)$, respectively. But this contradicts the fact that
$s\mapsto \calE(\sft(s_j),\sfu(s),\sfz(s_j))$ is strictly decreasing
(cf.\ Step 1). Hence, the first two cases cannot occur, and we
conclude $\mathfrak S=[s_*,S] $ with $s_*=s_4$ or $\mathfrak
S=\emptyset$. In particular, assertion (a) is established.

\underline{Step 3:}  To show (b) assume $s\in \mathfrak S =[s_*,S]$,
then $\sfu(s)= M(\sft(s),\sfz(s))$  by the definition of $\mathfrak S$.
Observe that $\rmD_z \calI(t,z)= \rmD_z\calE(t,M(t,z),z)+ \rmD_z
M(t,z)^T\rmD_u\calE(t,M(t,z), z)= \rmD_z\calE(t,M(t,z),z)+0$. Thus,
$(\sft,\sfz)$ solves
\[
\text{(ii)'}\quad
0\in (1 {-}\thn z)\partial\calR_0(\sfz,\sfz')+ \thn z
\bbV_\sfz \sfz'  + (1{-}\thn z) \rmD_z \calI(\sft,\sfz),\qquad
 \text{(iv)'} \quad \sft'\thn z
=0, \qquad \text{(vi)'} \quad \sft'+|\sfz'|>0,
\]
which proves that $(\sft,\sfz)$ is a BV solution of the reduced
system. For the latter relation note that $\sft'(s)+|\sfz'(s)|=0$
implies $\sfu'(s)=\frac\rmd{\rmd s} M(\sft(s),\sfz(s))=0$  so   that
(vi)' follows from (vi).
\end{proof}

Our approach in Step 1 of the above proof uses the qualitative ideas from
\cite{Zani07SPFD,AgRoSa14?TCG}, but our reduction to the simpler
convex case makes the analysis much easier. 

\paragraph{\bf \underline{Case $\alpha=1$:} comparable relaxation
  times,} Here $u$ and $z$ relax at the same rate.
At a jump, the system may switch to the viscous regime $\rgm VuVz$,
where \emph{both} in the evolution of $u$, and in the evolution for
$z$, viscous dissipation intervenes, modulated by the same coefficient
$\theta = \thn u= \thn z$.
\medskip

\paragraph{\bf \underline{Case $\alpha\in (0,1)$:} fast relaxation of
  $z$.} Here $z$ relaxes faster than $u$, and jumps in the
$z$-component are faster than jumps in the $u$-component.  If $z$
jumps (possibly governed by viscous dissipation), than $u$ stays
fixed,  i.e.\ $u$ is blocked while $z$ moves viscously (regime
$\rgm BuVz$). But then $u$ has still to relax to equilibrium, and
it will do it on a faster scale than  the rate-independent motion
of $z$, if $z$ stays in locally stable states (regime  $\rgm
VuRz$).  Finally, full rate-independent behavior in the regime
$\rgm EuRz$ will occur, where $\sft'(s)>0$.  Unlike in the case
$\alpha>1$,  all three  regimes may occur more than once
in the evolution of the system,  see Section \ref{ss:6.2} for an
example.

\section{Examples}
\label{s:5}

To illustrate the difference between the three  limit models (namely for $\alpha>1$, $\alpha=1$, and $\alpha \in (0,1)$),
we discuss two  examples. The first one treats a quadratic
energy and emphasizes the different initial behavior before the
solution converges to a truly rate-independent regime. In the second
example we show that solutions that start in a rate-independent
regime and coincide for the three different limit models may separate
if viscous jumps start, leading to different rate-independent behavior
afterwards.

\subsection{Initial relaxation for a system with quadratic energy}
\label{ss:6.1}

We consider the energy functional $\ene t{u,z}=\frac12(u-z)^2+ \frac12 z^2 -t u$
and trivial viscous energies leading to the ODE system
\begin{align}
  \label{eq:Exa1}
  \left\{\begin{array}{ll}
    0 = \eps^\alpha \dot u + u - z - t, \\
   0\in \Sign(\dot z) + \eps \dot z + 2 z - u
  \end{array} \right. \qquad \text{with } (u(0),z(0))=(2,-3/2).
\end{align}
We show simulations for the three cases \textcolor{blue}{$\alpha=2$
  (blue)}, \textcolor{green}{$\alpha=1$ (green)}, and
\textcolor{red}{$\alpha=1/2$ (red)} with sufficiently small $\eps$
(typically $0.001 \ldots 0.03$). The components $u$ and $z$ as
functions of time are depicted in Figure \ref{F1:uz}.

 However, to  detect different jump behavior at $t\approx 0$ it
is advantageous to look at the parameterized solutions, which are
depicted in Figure \ref{F3:Para}, showing 
$(\sft,\sfq)$  for the three different cases. The parameterization was
calculated using $\dot s(t)=\max\{0.5,|\dot u(t)|,\dot z(t)| \}$.
\begin{figure}
\centering
\includegraphics[width=16em]{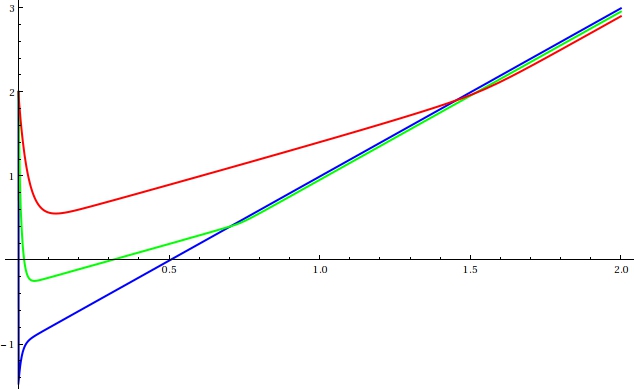}
{\unitlength1em
\begin{picture}(0,0)
\put(-15,9){$u(t)$}
\put(-1,3.8){$t$}
\end{picture}}%
\qquad
\includegraphics[width=16em]{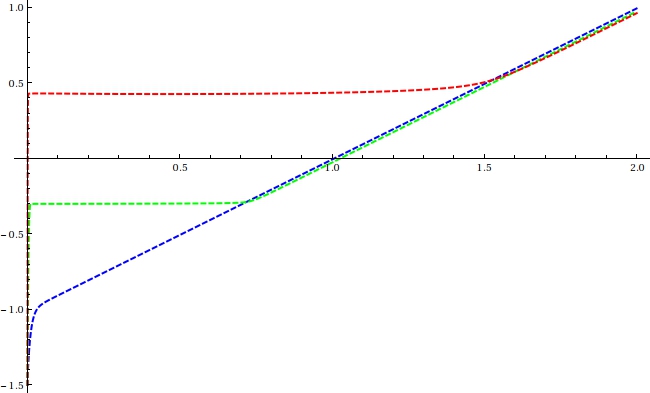}
{\unitlength1em
\begin{picture}(0,0)
\put(-15,9){$z(t)$}
\put(-1,4.5){$t$}
\end{picture}}
\caption{Solutions for \eqref{eq:Exa1}
  for the three cases \textcolor{blue}{$\alpha=2$
  (blue)}, \textcolor{green}{$\alpha=1$ (green)}, and
\textcolor{red}{$\alpha=1/2$ (red)}.}
\label{F1:uz}
\end{figure}
\begin{figure}
\centering {\unitlength1cm
\includegraphics[width=5\unitlength]{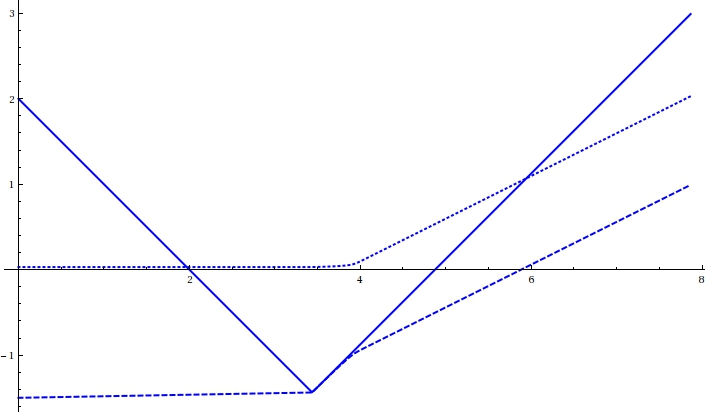}%
\begin{picture}(0,0)(5,0)
\put(-0.3,2.2){$\sfu$} \put(-0.3,1){$\sft$} \put(-0.3,0){$\sfz$}
\put(2.07,0){\line(0,1){2.7}} \put(2.45,0){\line(0,1){2.7}}
\color{blue}\put(0.7,2.5){$\rgm VuBz$}
\put(1.85,2.8){$\rgm EuVz$} \put(3,2.5){$\rgm EuRz$}
\end{picture}
\quad
\includegraphics[width=5\unitlength]{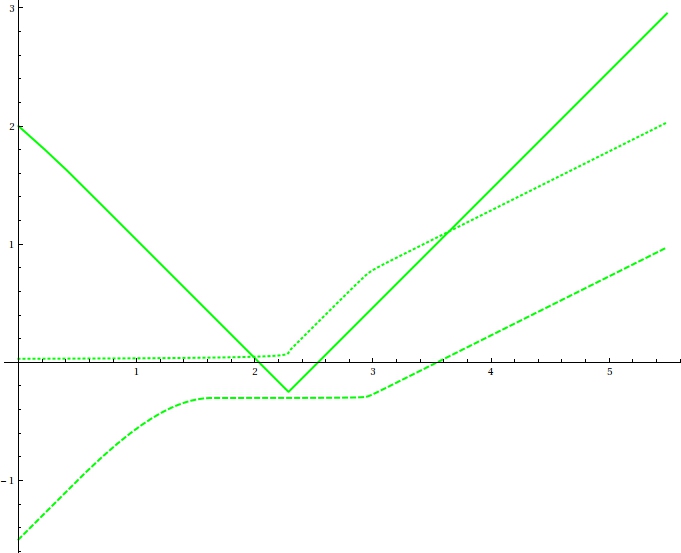}%
\begin{picture}(0,0)(5,0)
\put(-0.2,3.1){$\sfu$} \put(-0.2,1.3){$\sft$} \put(-0.2,0){$\sfz$}
\put(2.1,0){\line(0,1){3.7}}
\color{green}\put(0.7,3.3){$\rgm VuVz$}
   \put(2.6,3.3){$\rgm EuRz$}
\end{picture}
\quad
\includegraphics[width=5\unitlength]{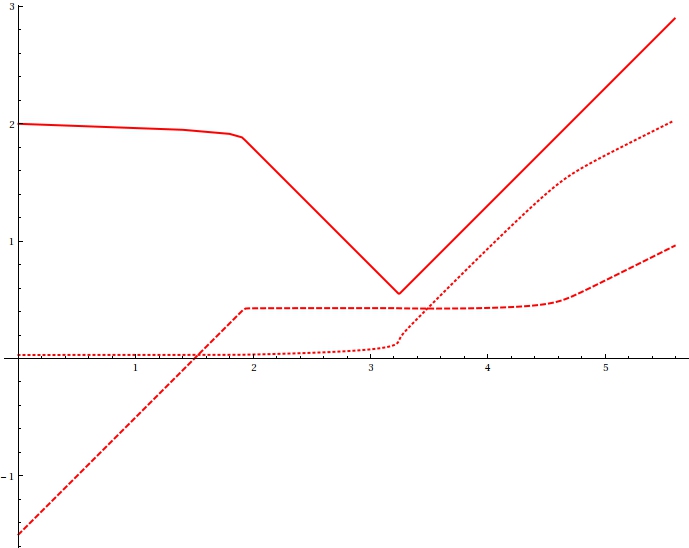}%
\begin{picture}(0,0)(5,0)
\put(-0.2,3){$\sfu$} \put(-0.2,1.3){$\sft$} \put(-0.15,0){$\sfz$}
\put(1.8,0.5){\line(0,1){3.2}} \put(2.85,0.5){\line(0,1){3.2}}
\color{red}\put(0.7,3.5){$\rgm BuVz$}
\put(1.9,3.5){$\rgm VuRz$} \put(3.3,3.5){$\rgm EuRz$}
\end{picture}
}
\caption{Solutions $(\sft,\sfu,\sfz)$ for \eqref{eq:Exa1} with
 dotted $\sft$, full $\sfu$, and dashed $\sfz$. \color{blue} Left
  $\alpha=2$, \color{green} middle $\color{green}\alpha=1$,
  \color{red} right $\alpha=1/2$.} 
\label{F3:Para}
\end{figure}
 In the parameterized form we fully see the structure of the jump
for $t\approx 0$. For \underline{$\alpha=2$} we
obtain first a jump from the initial datum $(u,z)=(2,-1.5)$ to
$(u,z)=(-1.5,-1.5)$ on the timescale $\eps^2$, which is the regime
$\rgm VuBz$. Then, $u$ is equilibrated, and a jump to $(-1,-1)$ along
the diagonal $u=z$ occurs on the timescale $\eps$, which is the regime
$\rgm EuVz$. Finally,  the
solution finds the rate-independent regime $\rgm EuRz$ with
$(u(t),z(t))=q_\mathrm{ri}(t):=(2t{-}1,t{-}1)$.

For \underline{$\alpha=1/2$} the
solution first jumps to $(2,0.5)$ on the time scale $\eps$, which is
the regime $\rgm BuVz$. Next,  and then there is a jump to
$(0.5,0.5)$ in the time scale $\eps^{1/2}$, which is regime $\rgm
VuRz$. Then, the rate-independent regime $\rgm EuRz$ starts, namely
via $(u(t),z(t))=(t{-}0.5, 0.5)$ for $t\in {]0,1.5]}$  and
$q_\mathrm{ri}$ for $t>1.5$.

The behavior for \underline{$\alpha=1$} is
intermediate: the jump occurs along a nonlinear curve  in regime
$\rgm VuVz$,  and
$q_\mathrm{ri}$ is joined for $t\geq t_*\approx 0.7$,  which is regime
$\rgm EuRz$. 

The different behavior and the different regimes are
also nicely seen by plotting the trajectories in the $(u,z)$-plane,
see Figure \ref{F2:uzPlane},  where the three different cases for
$\alpha$ are depicted again. 
\begin{figure}
{\unitlength1cm
\centering \includegraphics[width=10cm]{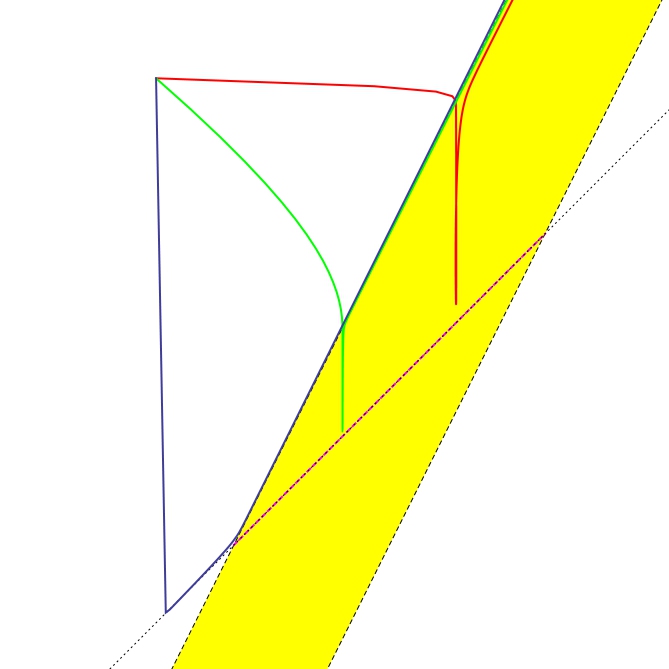}
\begin{picture}(0,0)(4.1,-4.4)
\put(4.4,0.2){$u$} \put(-0.3,5.2){$z$}
\put(-5,0){\line(1,0){10}}
\put(0,-4){\line(0,1){9.5}}
{\color{blue}\put(-4.5,2){$\rgm VuBz$}
   \put(-5,-2){\vector(2,-1){2}}
   \put(-5.5,-1.9){$\rgm VuBz$}
   \put(-2.8,-1.3){$\rgm EuRz$}}
{\color{green}\put(-2.5,3.3){$\rgm VuVz$}
   \put(-1.2,2){$\rgm EuRz$}}
{\color{red}\put(-2,4.53){$\rgm BuVz$}
   \put(0.62,2){$\rgm VuRz$}}
\put(1.4,5.1){$\rgm EuRz$}
\end{picture}}
\caption{{Solutions $(z(t),u(t))$ for \eqref{eq:Exa1}. The dotted line
  is the diagonal $u=z$, while the yellow area is the  locally
  stable  region $|2z{-}u|\leq 1$.} }
\label{F2:uzPlane}
\end{figure}

\subsection{Different jumps starting from the rate-independent regime}
\label{ss:6.2}

 Finally we provide  an example where the jumps start out of a
rate-independent motion,  i.e.\ we first have the regime $\rgm
EuRz$, and then the system becomes unstable and develops a jump. For
this purpose we use the nonconvex  energy
\begin{align*}
&\ene t{u,z}=\frac12 (u{-}g(z))^2 + F(z) - tu \quad \text{with }
g(z)=4z^3-4z\\
&\text{ and } F'(z)=-1 + (z{+}1)^2\big({-}40+10(z{+}1)^2+
38 \mathrm{e}^{-10(z+0.5)^2} \big).
\end{align*}
Using the standard viscous potentials as above,  the ODE system
reads
\begin{align}
  \label{eq:Exa2}
  \left\{
  \begin{array}{ll}
0 = \eps^\alpha \dot u + u - g(z) - t,\\
0\in \Sign(\dot z) + \eps \dot z + F'(z) +g'(z)(g(z){-}u)
\end{array}
\right. \quad \text{with }
(u(-0.2),z(-0.2))= (-2.4,-1.2).
\end{align}

\begin{figure}
\centering \includegraphics[width=16em]{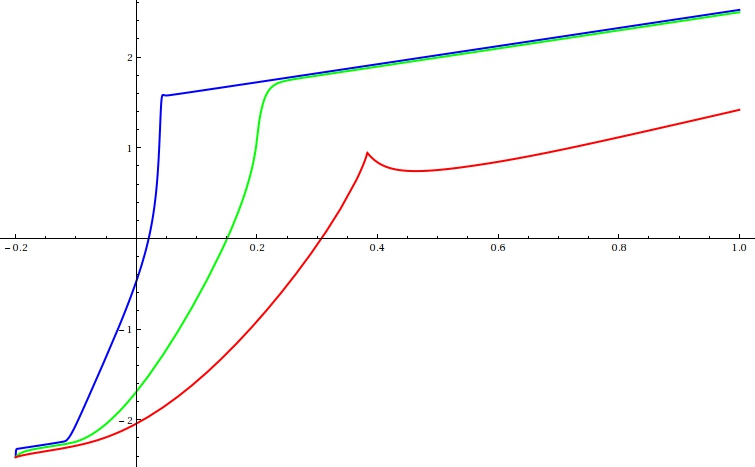} \qquad
\includegraphics[width=16em]{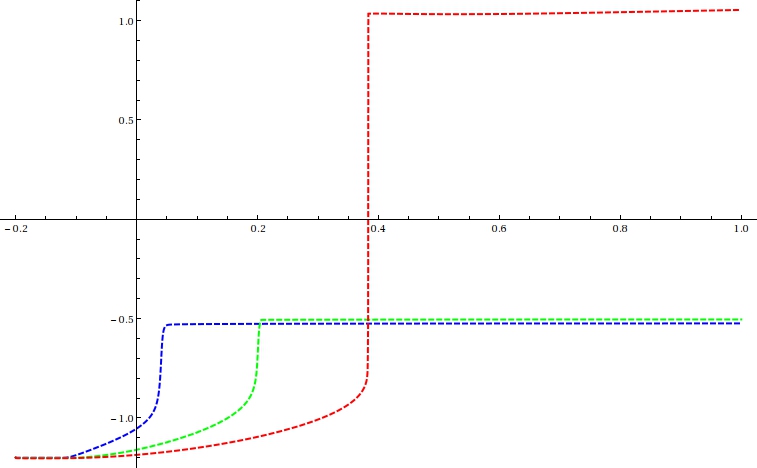}
\caption{Solutions for \eqref{eq:Exa2}: left $u(t)$  and right $z(t)$}
\label{F4:uz}
\end{figure}
Figure \ref{F4:uz} shows simulation results of $u(t)$ and $z(t)$ for
the three cases \textcolor{blue}{$\alpha=2$
  (blue)}, \textcolor{green}{$\alpha=1$ (green)}, and
\textcolor{red}{$\alpha=1/2$ (red)} with sufficiently small $\eps$. We see
that the solutions stay together for $t\in [-0.2,-0.1]$,  which is
exactly the time they stay in regime $\rgm EuRz$. Then, in all three
cases a jump develops, but this is quite different for different
$\alpha$. In Figure \ref{F6:Para} we provide graphics of the same
solutions, but now in the reparameterized form $(\sft,\sfu,\sfz)$ for
the three $\alpha$-values $2,\ 1$, and $1/2$, where again the
parameterization $s$ is chosen such that  $\dot
s(t)=\max\{0.5,|\dot u(t)|,\dot z(t)| \}$.  However, for this
example numerical instabilities prevented us from taking $\eps$ small
enough to have a better separation of time scale. Even in the
viscous regimes we still see $\sft'>0$ but small. Nevertheless,
Figure \ref{F6:Para} clearly shows the different regimes.
\begin{figure}
\centering
{\unitlength1cm
 \includegraphics[width=5\unitlength]{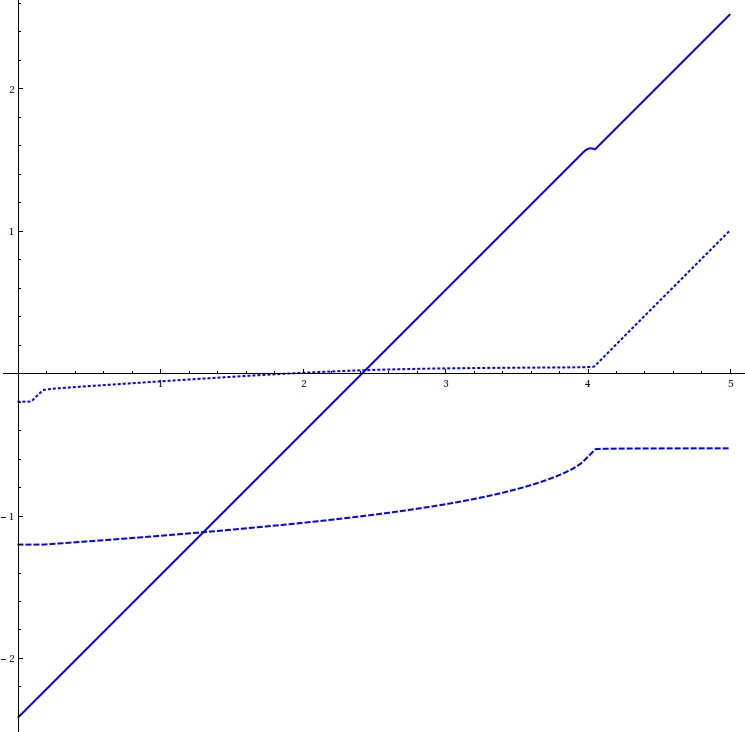}%
 \begin{picture}(0,0)(5,0)
  \put(-0.2,0){$\sfu$} \put(-0.2,1.2){$\sfz$} \put(-0.2,2.1){$\sft$}
  \put(0.5,0){\line(0,1){4}}  \put(3.99,1){\line(0,1){4}}
  \color{blue} \put(-0.1,4){$\rgm EuRz$}  \put(1.7,4){$\rgm EuVz$}
  \put(4.3,3.64){$\rgm EuRz$}
 \end{picture}
 \quad
 \includegraphics[width=5\unitlength]{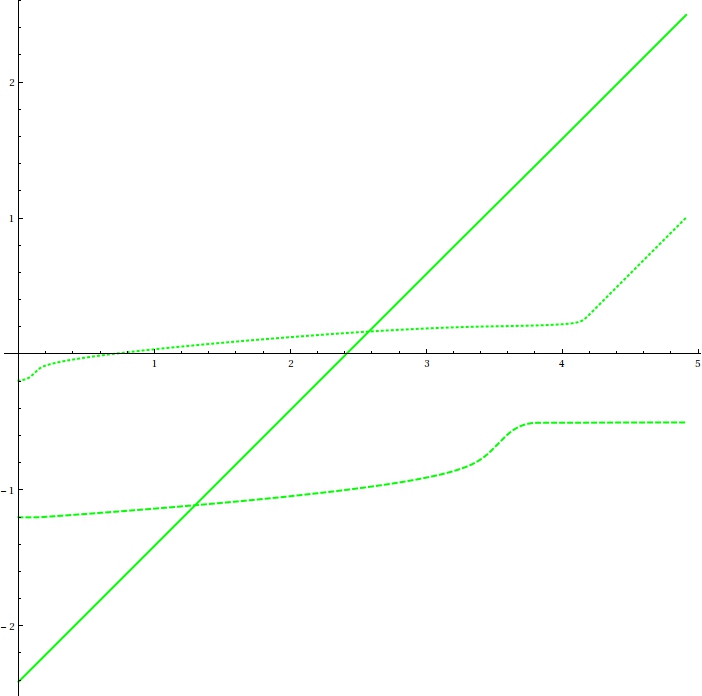}%
 \begin{picture}(0,0)(5,0)
  \put(-0.2,0){$\sfu$} \put(-0.2,1.2){$\sfz$} \put(-0.2,2.1){$\sft$}
  \put(0.5,0){\line(0,1){4}}  \put(3.9,1){\line(0,1){4}}
  \color{green} \put(-0.1,4){$\rgm EuRz$}  \put(1.7,4){$\rgm VuVz$}
  \put(4.2,3.64){$\rgm EuRz$}
 \end{picture}
 \quad
 \includegraphics[width=5\unitlength]{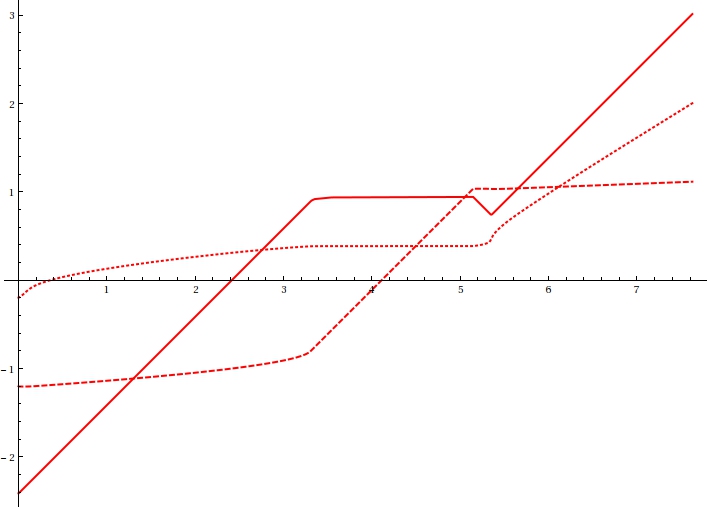}%
 \begin{picture}(0,0)(5,0)
  \put(-0.2,0){$\sfu$} \put(-0.2,0.8){$\sfz$} \put(-0.2,1.5){$\sft$}
  \put(0.5,0){\line(0,1){3}} \put(2.2,0){\line(0,1){3}}
  \put(3.3,0){\line(0,1){2.9}} \put(3.5,0){\line(0,1){2.9}}
  \color{red} \put(-0.1,3.5){$\rgm EuRz$}  \put(1,3){$\rgm VuRz$}
  \put(2.35,3.5){$\rgm BuVz$} \put(3,3){$\rgm VuRz$} \put(3.8,3.5){$\rgm EuRz$}
 \end{picture}
}
\caption{Solutions $(\sft,\sfu,\sfz)$ for \eqref{eq:Exa2} with
 dotted $\sft$, full $\sfu$, and dashed $\sfz$. \color{blue} Left
  $\alpha=2$, \color{green} middle $\color{green} \alpha=1$,
  \color{red} right $\alpha=1/2$.} 
\label{F6:Para}
\end{figure}

Figure \ref{F5:uzPlane} shows the trajectories in
the $(z,u)$-plane.
\begin{figure}
\centering \includegraphics[width=26em]{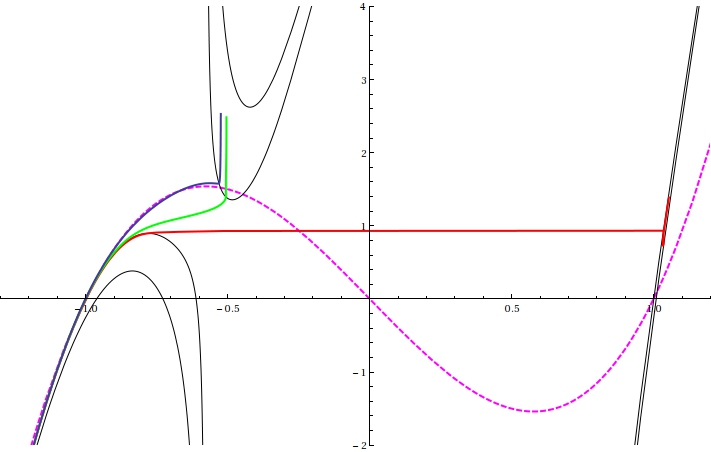}
\caption{Solutions $(z(t),u(t))$ for \eqref{eq:Exa2}. The dashed magenta line
  is $u=g(z)$, while the black curves display the boundaries of the
  locally stable domain   $|F'(z) +g'(z)(g(z){-}u)| \leq 1$. }
\label{F5:uzPlane}
\end{figure}

\bibliographystyle{my_alpha}
\bibliography{refs_2visc}

\def\cprime{$'$} \def\cprime{$'$}
\begin{thebibliography}{AA00}\itemsep0.1em

\bibitem[AGS08]{AGS08}
{\scshape L.~Ambrosio, N.~Gigli, {\upshape and} G.~Savar{\'e}}.
\newblock {\em Gradient flows in metric spaces and in the space of probability
  measures}.
\newblock Lectures in Mathematics ETH Z\"urich. Birkh\"auser Verlag, Basel,
  second edition, 2008.

\bibitem[Amb95]{Ambrosio95}
{\scshape L.~Ambrosio}.
\newblock Minimizing movements.
\newblock {\em Rend. Accad. Naz. Sci. XL Mem. Mat. Appl. (5)}, 19, 191--246,
  1995.

\bibitem[ARS14]{AgRoSa14?TCG}
{\scshape V.~Agostiniani, R.~Rossi, {\upshape and} G.~Savar\'e}.
\newblock Balanced viscosity solutions of singularly perturbed gradient flows
  in infinite dimension.
\newblock {\em In preparation}, 2014.

\bibitem[BFM12]{BabFraMor12}
{\scshape J.-F.~Babadjian, G.~Francfort, {\upshape and} M.~Mora}.
\newblock Quasistatic evolution in non-associative plasticity - the cap model.
\newblock {\em SIAM J. Math. Anal.}, 44, 245--292, 2012.

\bibitem[Col92]{Colli92}
{\scshape P.~Colli}.
\newblock On some doubly nonlinear evolution equations in {B}anach spaces.
\newblock {\em Japan J. Indust. Appl. Math.}, 9(2), 181--203, 1992.

\bibitem[CoV90]{ColliVisintin90}
{\scshape P.~Colli {\upshape and} A.~Visintin}.
\newblock On a class of doubly nonlinear evolution equations.
\newblock {\em Comm. Partial Differential Equations}, 15(5), 737--756, 1990.

\bibitem[{Da}S13]{DM-Scala}
{\scshape G.~{Dal Maso} {\upshape and} R.~Scala}.
\newblock Quasistatic evolution in perfect plasticity as limit of dynamic
  processes.
\newblock {\em Preprint}, 2013.
\newblock Available at \texttt{http://cvgmt.sns.it/paper/2206/}.

\bibitem[DDS11]{DalMaso-DeSimone-Solombrino2011}
{\scshape G.~{Dal Maso}, A.~{DeSimone}, {\upshape and} F.~{Solombrino}}.
\newblock Quasistatic evolution for {C}am-{C}lay plasticity: a weak formulation
  via viscoplastic regularization and time rescaling.
\newblock {\em Calc. Var. Partial Differential Equations}, 40(1-2), 125--181,
  2011.

\bibitem[DFT05]{DaFrTo05QCGN}
{\scshape G.~{Dal Maso}, G.~Francfort, {\upshape and} R.~Toader}.
\newblock Quasistatic crack growth in nonlinear elasticity.
\newblock {\em Arch. Rational Mech. Anal.}, 176, 165--225, 2005.

\bibitem[DMDS12]{DalMaso-DeSimone-Solombrino2012}
{\scshape G.~Dal~Maso, A.~DeSimone, {\upshape and} F.~Solombrino}.
\newblock Quasistatic evolution for {C}am-{C}lay plasticity: properties of the
  viscosity solution.
\newblock {\em Calc. Var. Partial Differential Equations}, 44(3-4), 495--541,
  2012.

\bibitem[EfM06]{ef-mie06}
{\scshape M.~Efendiev {\upshape and} A.~Mielke}.
\newblock On the rate--independent limit of systems with dry friction and small
  viscosity.
\newblock {\em J. Convex Analysis}, 13(1), 151--167, 2006.

\bibitem[FrS13]{FraSte}
{\scshape G.~A.~Francfort {\upshape and} U.~Stefanelli}.
\newblock Quasi-static evolution for the {A}rmstrong-{F}rederick
  hardening-plasticity model.
\newblock {\em Appl. Math. Res. Express. AMRX}, 2013(2), 297--344, 2013.

\bibitem[Iof77]{Ioff77LSIF}
{\scshape A.~D.~Ioffe}.
\newblock On lower semicontinuity of integral functionals. {I}.
\newblock {\em SIAM J. Control Optimization}, 15(4), 521--538, 1977.

\bibitem[IoT79]{IofTih}
{\scshape A.~D.~Ioffe {\upshape and} V.~M.~Tihomirov}.
\newblock {\em Theory of extremal problems}, volume~6 of {\em Studies in
  Mathematics and its Applications}.
\newblock North-Holland Publishing Co., Amsterdam-New York, 1979.
\newblock Translated from the Russian by Karol Makowski.

\bibitem[KMZ08]{KnMiZa07?ILMC}
{\scshape D.~Knees, A.~Mielke, {\upshape and} C.~Zanini}.
\newblock On the inviscid limit of a model for crack propagation.
\newblock {\em Math. Models Methods Appl. Sci.}, 18(9), 1529--1569, 2008.

\bibitem[KRZ13]{KnRoZa2011}
{\scshape D.~{Knees}, R.~{Rossi}, {\upshape and} C.~{Zanini}}.
\newblock A vanishing viscosity approach to a rate-independent damage model.
\newblock {\em Math. Models Methods Appl. Sci.}, 23(4), 565--616, 2013.

\bibitem[Mie03]{Miel03EFME}
{\scshape A.~Mielke}.
\newblock Energetic formulation of multiplicative elasto--plasticity using
  dissipation distances.
\newblock {\em Cont. Mech. Thermodynamics}, 15, 351--382, 2003.

\bibitem[Mie05]{Miesurvey}
{\scshape A.~Mielke}.
\newblock Evolution in rate-independent systems ({C}h.~6).
\newblock In C.~Dafermos {\upshape and} E.~Feireisl, editors, {\em Handbook of
  Differential Equations, Evolutionary Equations, vol.~2}, pages 461--559.
  Elsevier B.V., Amsterdam, 2005.

\bibitem[Mie11]{Miel08?DEMF}
{\scshape A.~Mielke}.
\newblock Differential, energetic and metric formulations for rate-independent
  processes ({C}h.~3).
\newblock In L.~Ambrosio {\upshape and} G.~Savar\'e, editors, {\em Nonlinear
  PDEs and Applications.C.I.M.E. Summer School, Cetraro, Italy 2008}, pages
  87--170. Springer, Heidelberg, 2011.

\bibitem[MiT99]{Mie-Theil99}
{\scshape A.~Mielke {\upshape and} F.~Theil}.
\newblock A mathematical model for rate-independent phase transformations with
  hysteresis.
\newblock In H.-D.~Alber, R.~Balean, {\upshape and} R.~Farwig, editors, {\em
  Proceedings of the Workshop on ``Models of Continuum Mechanics in Analysis
  and Engineering''}, pages 117--129, Aachen, 1999. Shaker-Verlag.

\bibitem[MiT04]{Mielke-Theil04}
{\scshape A.~Mielke {\upshape and} F.~Theil}.
\newblock On rate--independent hysteresis models.
\newblock {\em Nonl. Diff. Eqns. Appl. (NoDEA)}, 11, 151--189, 2004.
\newblock (Accepted July 2001).

\bibitem[MiZ14]{Mielke-Zelik}
{\scshape A.~Mielke {\upshape and} S.~Zelik}.
\newblock On the vanishing viscosity limit in parabolic systems with
  rate-independent dissipation terms.
\newblock {\em Ann. Sc. Norm. Sup. Pisa Cl. Sci. (5)}, XIII, 67--135, 2014.

\bibitem[MRS09]{MRS09}
{\scshape A.~Mielke, R.~Rossi, {\upshape and} G.~Savar\'e}.
\newblock Modeling solutions with jumps for rate-independent systems on metric
  spaces.
\newblock {\em Discrete Contin. Dyn. Syst.}, 25, 585--615, 2009.

\bibitem[MRS12]{MRS10}
{\scshape A.~Mielke, R.~Rossi, {\upshape and} G.~Savar\'e}.
\newblock Bv solutions and viscosity approximations of rate-independent
  systems.
\newblock {\em ESAIM Control Optim. Calc. Var.}, 18, 36--80, 2012.

\bibitem[MRS13a]{mielke-rossi-savare2013}
{\scshape A.~Mielke, R.~Rossi, {\upshape and} G.~Savar\'e}.
\newblock Balanced viscosity (bv) solutions to infinite-dimensional
  rate-independent systems.
\newblock {\em J. Eur. Math. Soc.}, 2013.
\newblock Submitted. WIAS preprint 1845.

\bibitem[MRS13b]{MRS-dne}
{\scshape A.~Mielke, R.~Rossi, {\upshape and} G.~Savar\'e}.
\newblock Nonsmooth analysis of doubly nonlinear equations.
\newblock {\em Calc. Var. Partial Differential Equations}, 46(1-2), 253--310,
  2013.

\bibitem[Rac12]{Racca}
{\scshape S.~Racca}.
\newblock A viscosity-driven crack evolution.
\newblock {\em Adv. Calc. Var.}, 5(4), 433--483, 2012.

\bibitem[Rou09]{Roub09}
{\scshape T.~Roub{\'{\i}}{\v{c}}ek}.
\newblock Rate-independent processes in viscous solids at small strains.
\newblock {\em Math. Meth. Appl. Sci.}, 32, 825--862, 2009.

\bibitem[Rou10]{Roub10}
{\scshape T.~Roub{\'{\i}}{\v{c}}ek}.
\newblock Thermodynamics of rate-independent processes in viscous solids at
  small strains.
\newblock {\em SIAM J. Math. Anal.}, 40, 256--297, 2010.

\bibitem[Rou13]{Roub13}
{\scshape T.~Roub{\'{\i}}{\v{c}}ek}.
\newblock Adhesive contact of visco-elastic bodes and defect measures arising
  by vanishing viscosity.
\newblock {\em SIAM J. Math. Anal.}, 45, 101--126, 2013.

\bibitem[Sca14]{Scala}
{\scshape R.~Scala}.
\newblock Limit of viscous dynamic processes in delamination as the viscosity
  and inertia vanish.
\newblock {\em Preprint}, 2014.
\newblock Available at \texttt{http://cvgmt.sns.it/paper/2434/}.

\bibitem[ToZ09]{ToaZan06}
{\scshape R.~Toader {\upshape and} C.~Zanini}.
\newblock An artificial viscosity approach to quasistatic crack growth.
\newblock {\em Boll. Unione Mat. Ital. (9)}, 2(1), 1--35, 2009.

\bibitem[Zan07]{Zani07SPFD}
{\scshape C.~Zanini}.
\newblock Singular perturbations of finite dimensional gradient flows.
\newblock {\em Discr. Cont.\ Dyanm. Syst.}, 18(4), 657--675, 2007.

\end{thebibliography}

\end{document}